\renewcommand{\thefootnote}{\fnsymbol{footnote}}
\DeclareMathOperator{\dom}{dom}
\DeclareMathOperator{\interior}{int}
\DeclareMathOperator{\support}{supp}
\DeclareMathOperator*{\argmax}{arg\,max}
\DeclareMathOperator*{\OT}{OT}
\DeclareMathOperator*{\sumnorm}{sum}
\newcommand{\R}{\mathbb{R}}
\newcommand{\N}{\mathbb{N}}
\newcommand{\M}{\mathcal{M}}
\newcommand{\mb}{\mathbb}
\newcommand{\mc}{\mathcal}
\DeclareMathOperator{\Lip}{Lip}
\DeclareMathOperator{\diam}{diam}
\newtheorem{proposition}{Proposition}
\newtheorem{lemma}[proposition]{Lemma}
\newtheorem*{corollary*}{Corollary}
\newtheorem{theorem}[proposition]{Theorem}
\newtheorem*{theorem*}{Theorem}
\newtheorem{example}[proposition]{Example}
\theoremstyle{definition}
\newtheorem{definition}[proposition]{Definition}
\theoremstyle{remark}
\newtheorem{remark}[proposition]{Remark}
\begin{document}

%

%

\twocolumn[

\aistatstitle{Optimal transport with $f$-divergence regularization and generalized Sinkhorn algorithm}

\aistatsauthor{ D\'avid Terj\'ek\footnotemark[1] \And Diego Gonz\'alez-S\'anchez\footnotemark[1] }
\runningauthor{D\'avid Terj\'ek, Diego Gonz\'alez-S\'anchez}

\aistatsaddress{ Alfr\'ed R\'enyi Institute of Mathematics \And  Alfr\'ed R\'enyi Institute of Mathematics } ]

\begin{abstract}
Entropic regularization provides a generalization of the original optimal transport problem. It introduces a penalty term defined by the Kullback-Leibler divergence, making the problem more tractable via the celebrated Sinkhorn algorithm. Replacing the Kullback-Leibler divergence with a general $f$-divergence leads to a natural generalization. The case of divergences defined by superlinear functions was recently studied by Di Marino and Gerolin. Using convex analysis, we extend the theory developed so far to include all $f$-divergences defined by functions of Legendre type, and prove that under some mild conditions, strong duality holds, optimums in both the primal and dual problems are attained, the generalization of the $c$-transform is well-defined, and we give sufficient conditions for the generalized Sinkhorn algorithm to converge to an optimal solution. We propose a practical algorithm for computing an approximate solution of the optimal transport problem with $f$-divergence regularization via the generalized Sinkhorn algorithm. Finally, we present experimental results on synthetic 2-dimensional data, demonstrating the effects of using different $f$-divergences for regularization, which influences convergence speed, numerical stability and sparsity of the optimal coupling.
\end{abstract}

\footnotetext[1]{Both authors should be equally credited for this work.}
\renewcommand{\thefootnote}{\arabic{footnote}}

\section{INTRODUCTION}

Since its inception in the 18th century with the work of Gaspard Monge, the theory of optimal transport \citep{Villani2008} has found its applications in many areas such as physics, economics and statistics. Among other developments, the optimal transport problem led L. V. Kantorovich to develop his duality theory \citep{Kantorovich1940} and to pioneer the field of linear programming \citep{Kantorovich1939} for practical solutions during World War II. This theory has been applied successfully in computer vision in tasks such as image retrieval \citep{Rubneretal1997}. However, computing the optimal transport involved solving a linear program which was computationally too costly to apply it to machine learning. Cuturi showed that slightly modifying the original optimal transport problem by introducing a regularization term one can compute the (regularized) optimal transport cost using the Sinkhorn algorithm \citep{Sinkhorn1967} in significantly less time \citep{Cuturi2013}. In recent years, this generalization of the optimal transport problem called entropy-regularized optimal transport \citep{Peyreetal2019} has become a popular tool in the machine learning community \citep{Feydyetal2018,Lorenz2020,Dimarinoetal2020}.

\subsection{Our contributions}

Let $\mu$ and $\nu$ be Borel probability measures defined on compact metric spaces $X$ and $Y$ respectively. Let $D_\phi$ be an $f$-divergence defined by a convex and lower semicontinuous function $\phi: \R \to \overline{\R}$ such that $\phi(1)=0$. Let $c : X \times Y \to \R$ be a Lipschitz continuous cost function and $\epsilon>0$ a constant. We are interested in the optimal transport problem with $f$-divergence regularization (or Primal Problem) defined as
\begin{equation}\label{eq:primal}
\OT_{\epsilon}(\mu,\nu) = \inf_{\pi \in \Pi(\mu,\nu)}\left\{ \int c d\pi + \epsilon D_\phi(\pi \Vert \mu \otimes \nu) \right\}
\end{equation}
where $\Pi(\mu,\nu)$ is the set of Borel couplings of $\mu$ and $\nu$. The corresponding Dual Problem is then
\begin{multline}\label{eq:dual}
\sup_{f \oplus g \leq c + \epsilon \phi'(\infty)}\left\{ \int f \oplus g d\mu \otimes \nu\right. \\\left.- \epsilon \int \phi_+^* \circ \frac{1}{\epsilon}(f \oplus g - c) d\mu \otimes \nu \right\},
\end{multline}
where the potentials $f$ and $g$ are assumed to be real-valued Lipschitz functions on $X$ and $Y$, respectively, $\phi'(\infty)=\lim_{x \to \infty}{\frac{\phi(x)}{x}}$, and $\phi_+^*$ is the convex conjugate of $\phi_+ = \phi + \iota_{\R_+}$.

In this paper we prove that if $\phi$ is of Legendre type then the Primal and Dual Problems have equal optimums. Furthermore, there exists optimal couplings for \eqref{eq:primal} and optimal potentials for \eqref{eq:dual}. This generalizes the work of \cite{Dimarinoetal2020}, which develops the theory for superlinear $\phi$, i.e. for $\phi'(\infty)=\infty$. We also prove that the singular part (which is always 0 for superlinear $\phi$) of an optimal coupling is supported on a $c$-cyclically monotone set \citep[Definition~5.1]{Villani2008} (see Theorem~\ref{thm:f-kantorovich-duality}).

In order to prove these results, we also generalize the $(c,\epsilon,\phi)$-transform \citep[Definition~3.1]{Dimarinoetal2020} so that it also works for non-superlinear $\phi$, i.e. for $\phi'(\infty) < \infty$. This turned out to be a non-trivial task. Moreover, an interesting phenomenon occurs in the case of non-superlinear divergences as the corresponding $(c,\epsilon,\phi)$-transform sometimes collapses to (almost) the $c$-transform \citep[Definition~5.2]{Villani2008} (see Proposition~\ref{prop:properties-c-eps-phi-trans} in Appendix~\ref{appendix_proofs}). This shows a more explicit connection between the classical theory of optimal transport and the regularized versions.

We show that a generalized version of the Sinkhorn algorithm (also denoted IPFP sequences \citep{Dimarinoetal2020}) converge to an optimal solution even in the non-superlinear case under mild assumptions (see Definition~\ref{def:good_triple} and Theorem~\ref{thm:conv_sinkhorn}). Finally, we propose a practical algorithm for computing an approximate solution of the optimal transport problem with $f$-divergence regularization using the generalized Sinkhorn algorithm.

We demonstrate the method on synthetic 2-dimensional point clouds. Our results indicate that for practical implementations the $\chi^2$ divergence can compete with the Kullback-Leibler divergence of classical entropy-regularized OT. The corresponding algorithm is slightly slower but gives sparse optimal couplings. Thus, it could be useful in any task where we can make use of this sparsity, see Appendix~\ref{app:grad-through}.

\subsection{Related work}\label{sec:rel-work}

Since the breakthrough of \cite{Cuturi2013}, the area of entropy-regularized optimal transport has grown quickly \citep{Peyreetal2019, Santambrogio2015}. Some of them have focused on studying the case of the Kullback-Leibler divergence and $\Gamma$-convergence to the unregularized problem \citep{Clasonetal2019}. Others have focused on generalizing the regularization to tackle linear programming problems \citep{Benamouetal2015}. There are results on $\Gamma$-convergence for the squared Euclidean cost and a proof of convergence of the discrete entropic smoothing of the Wasserstein gradient flow \citep{Carlieretal2017}. We can also find a theoretical proof together with practical experiments of the usefulness of Sinkhorn divergences, which remove the bias introduced to the optimal coupling by the regularization term \citep{Feydyetal2018}. Other types of generalizations have also been proposed \citep{Robertsetal2017}.

But the work that motivated the most our results (and which is clearly closest to this paper) is \cite{Dimarinoetal2020}. In this paper we find general results on strong duality and convergence of Sinkhorn iterations in the superlinear case ($\phi'(\infty)=\infty$). Indeed, our initial motivation was to understand the difficulties that arise in the non-superlinear case as most of the popular $f$-divergences used nowadays are non-superlinear \citep[Table~1]{Agrawaletal2020}, whereas in many places in the literature this assumption seems necessary (see \cite[Assumption~3.1]{Carlieretal2017} and \cite[Section~4]{Lorenz2020}). Thus, we decided to follow the same structure as \cite{Dimarinoetal2020} in the theoretical section of our paper, generalizing the proofs and concepts present in their work. In addition, we wanted to give rigorous proofs in the context of Lipschitz functions, that, as we explain in the paper, model better the case of neural networks. 

To conclude this section, we would like to highlight \cite{Desseinetal2018} where we find results on regularized optimal transport in finite spaces with Bregman divergences, which intersect with the set of $f$-divergences only at the Kullback-Leibler divergence. And \cite{Muzellecetal2017} where the case of Tsallis entropies (which are a subset of $f$-divergences) for the discrete case is covered. Other works focusing on finite spaces are \cite{Genevayetal2016, Altschuler2017, Blondel2018, Luiseetal2018, Luiseetal2019}. Closer to our work are also \cite{Ferradansetal2014, Rakotomamonjy2015, CuturiPeyre2016, Lorenzetal2019, Dimarinoetal2019, Kuroseetal2021, EcksteinNutz2021, Linetal2019}.

\section{BACKGROUND} \label{section_background}

\subsection{Notation}
We denote the extended reals by $\overline{\R}=\R\cup\{\pm\infty\}$, the nonnegative reals by $\R_+$, and the extended nonnegative reals by $\overline{\R}_+=\R_+ \cup {\infty}$. The indicator of a set $A$ is denoted by $\iota_A$ with $\iota_A(x)=0$ if $x \in A$ and $\iota_A(x)=\infty$ otherwise. We denote by $\interior A$ the interior of a set $A$ inside a topological space. Absolute continuity and singularity of measures will be denoted by $\ll$ and $\perp$ respectively. The Radon-Nikodym derivative of a measure $\mu$ with respect to a nonnegative measure $\nu$ such that $\mu \ll \nu$ is denoted by $\frac{d\mu}{d\nu}$ and the support of a measure $\mu$ by $\support(\mu)$. The product of measures $\mu,\nu$ is denoted by $\mu \otimes \nu$ and the set of measures having $\mu$ and $\nu$ as marginals by $\Pi(\mu,\nu)$. The set of probability measures on a measurable space $X$ is denoted by $P(X)$. For functions $f : X \to \R$ and $g : Y \to \R$, the tensor sum $f \oplus g : X \times Y \to \R$ is defined as $f \oplus g(x,y) = f(x)+g(y)$. Given a convex function $\phi : \R \to \overline{\R}$, its effective domain $\dom \phi \subset \R$ is defined as $\dom \phi = \{ s \in \R : \phi(s) < \infty \}$ and the convex conjugate $\phi^* : \R \to \overline{\R}$ as $\phi^*(t) = \sup_{s \in \R}\{ st - \phi(s) \}$. Such a function $\phi$ is proper if $\dom \phi \neq \emptyset$ and $\phi > -\infty$.

\subsection{$f$-divergences}
Given a proper, convex and lower semicontinuous function\footnote{Originally, $f$ is used in place of $\phi$ (hence the name), but we reserve the symbol $f$ for other functions.} $\phi : \R \to \overline{\R}$, a measure $\mu$ and a nonnegative measure $\nu$ on a measurable space $X$, the $f$-divergence of $\mu$ from $\nu$ is defined \citep{Csiszar1963,Alietal1966, Csiszar1967,Csiszaretal1999,Borweinetal1993, Agrawaletal2020} as
\begin{multline*}
    D_\phi(\mu \Vert \nu) = \int \phi \circ \frac{d\mu_c}{d\nu} d\nu\\ + \phi'(\infty)\mu_s^+(X) - \phi'(-\infty)\mu_s^-(X).
\end{multline*}
Here, $\mu_c \ll \nu, \mu_s \perp \nu$ are the absolutely continuous and singular parts of the Lebesgue decomposition of $\mu$ with respect to $\nu$ and $\mu_s^+, \mu_s^- \geq 0$ is the Jordan decomposition of the singular part. By definition $\phi'(\pm\infty) = \lim_{x \to \pm\infty}{\frac{\phi(x)}{x}} \in \overline{\R}$. Restricting to nonnegative measures can be done by using $\phi_+ = \phi + \iota_{\R_+}$ in place of $\phi$, inducing $D_{\phi_+}(\mu \Vert \nu) = D_{\phi}(\mu \Vert \nu)$ if $\mu \geq 0$ and $\infty$ otherwise.

A subset of $f$-divergences including the Kullback-Leibler, reverse Kullback-Leibler, $\chi^2$, reverse $\chi^2$, squared Hellinger, Jensen-Shannon, Jeffreys and triangular discrimination divergences, but excluding the total variation, consists of those defined by functions $\phi$ of Legendre type. A proper, convex and lower semicontinuous function $\phi : \R \to \overline{\R}$ is said to be of Legendre type \citep[Definition~2.5]{Borweinetal1993} if it is strictly convex on $\dom \phi$ and differentiable on $\interior \dom \phi$ with $\lim_{s \to \inf \dom \phi} \phi'(s) = -\infty$ if $\inf \dom \phi > -\infty$ and $\lim_{s \to \sup \dom \phi} \phi'(s) = \infty$ if $\sup \dom \phi < \infty$.

\subsection{Entropy-regularized optimal transport}

Let $\mu \in P(X)$ and $\nu \in P(Y)$ be probability measures defined on spaces $X$ and $Y$ and let $D_\phi$ be an $f$-divergence. The generalized entropy regularized optimal transport problem with cost function $c : X \times Y \to \overline{\R}$ and regularization coefficient $\epsilon \in \R_+$ is defined in \eqref{eq:primal}, and the corresponding dual problem\footnote{The constraint $f \oplus g \leq c + \epsilon \phi'(\infty)$ is absent if $\phi$ is superlinear.} is defined in \eqref{eq:dual}
\citep{Dimarinoetal2020}. Research in this area deals with the problem of finding suitable conditions under which strong duality holds, i.e. \eqref{eq:dual} equals \eqref{eq:primal}. In some cases of interest, there are known sufficient conditions ensuring that the infimum and the supremum are achieved by optimal primal and dual variables, and characterizations of such optimal variables have been developed as well.

The case $\epsilon=0$ with any $\phi$ reduces to the original, unregularized optimal transport problem, the duality theory of which is named after its most prominent contributor L. V. Kantorovich  \citep[Theorem~5.10]{Villani2008}. In this case, one has that there exists a closed, $c$-cyclically monotone set $C \subset X \times Y$ such that any optimal primal variable $\pi$ is supported on $C$. A set $C \subset X \times Y$ is called $c$-cyclically monotone \citep[Definition~5.1]{Villani2008} if for any subset $\{ (x_1,y_1),\dots,(x_n,y_n) \} \subset C$ for $n \in \N$, one has $\sum_{i = 1}^n c(x_i,y_i) \leq \sum_{i = 1}^{n-1} c(x_i,y_{i+1}) + c(x_n,y_1)$. This means that an optimal $\pi$ only assigns mass to pairs $(x_1,y_1),(x_2,y_2)$ such that one can not get lower transport cost by rerouting $\pi$ to assign mass to $(x_1,y_2),(x_2,y_1)$ instead.

The case $\epsilon > 0$ with $\phi = x \log(x) - x + 1$, corresponding to the Kullback-Leibler divergence, became a popular tool in machine learning due to its better computational performance over the unregularized case. Cuturi proved that the Sinkhorn algorithm can be used in this case to obtain the optimal variables in a significantly smaller timeframe compared to the unregularized case \citep{Cuturi2013}. The price of efficiency is the optimal coupling being biased, an issue that has been investigated and remedied \citep{Feydyetal2018}. For more references on the state of the art see Section~\ref{sec:rel-work}.

\section{OPTIMAL TRANSPORT WITH $f$-DIVERGENCE REGULARIZATION} \label{section_main}

\subsection{$(c,\epsilon,\phi)$-transform and $f$-Kantorovich duality}

In this paper we study the problem of regularized optimal transport under the assumptions that the underlying spaces $X$ and $Y$ are compact metric spaces, and the cost function $c:X\times Y\to \R$ and the potentials $f:X\to\R$, $g:Y\to\R$ are Lipschitz. The reason we have chosen this family of functions is that for most applications the costs involved satisfy this hypothesis. Also, for deep learning applications, any function represented by a neural network is a Lipschitz function, and if one aims to implement the potentials by neural networks such as in a GAN setting, it makes sense to develop the theory of regularized optimal transport on Lipschitz functions.

\begin{remark}
The results presented in this paper can be also applied for Polish spaces $X$ and $Y$ as long as the measures $\mu\in P(X)$ and $\nu\in P(Y)$ are compactly supported. Furthermore, we can always assume that both $\mu$ and $\nu$ are of full support. To see this, note that if $\pi\in \Pi(\mu,\nu)$ then $\support(\pi) \subset \support(\mu)\times \support(\nu)$. Thus, for many problems (such as the ones we deal with in this paper), given compactly supported measures $\mu$ and $\nu$ on Polish spaces $X$ and $Y$ respectively, we can assume that $\support(\mu)=X$ and $\support(\nu)=Y$. If this is not the case, we can always restrict ourselves to the support, apply all the results that we are going to present to $\support(\mu)$ and $\support(\nu)$ and then go back to the original spaces. Given a measure defined in $\support(\mu)\times\support(\nu)$ it is trivial how to define a measure on $X\times Y$, and any function $f\in \Lip(\support(\mu))$ or $g\in\Lip(\support(\nu))$ can be extended to a Lipschitz function on $X$ or $Y$ with the same Lipschitz norm, respectively \citep[Theorem~4.1.1]{Cobzasetal2019}.
\end{remark}

Our first main result concerns the generalization of the $c$-transform. Recall the classical problem of optimal transport $\OT(\mu,\nu)=\inf_{\pi\in \Pi(\mu,\nu)}\{\int c\;d\pi\}$ for a cost function $c:X\times Y\to \mb{R}$ \citep{Villani2008}. For the sake of simplicity we assume that $c$ is continuous and $X$ and $Y$ are compact metric spaces. It is trivial that for any pair of continuous functions $f:X\to \R$ and $g:Y\to \R$, if $f\oplus g\le c$ then $\int f\;d\mu+\int g\;d\nu \le \int c\;d\pi$ for any $\pi\in \Pi(\mu,\nu)$. A classical result of Kantorovich shows that in fact the supremum of $\int f\;d\mu+\int g\;d\nu$ over all functions $f\oplus g\le c$ equals the infimum of $ \int c\;d\pi$ over all couplings \citep{Villani2008}.

Let us now think about this problem in the following way, if $f\oplus g\le c$ for any pair of functions $(f,g)$ we can ``improve'' the value of $\int f\;d\mu+\int g\;d\nu$ by replacing $g$ with $\inf_{x\in X}\{c(x,y)-f(x)\}:=f^c(y)$. The latter function is called the $c$-transform of $f$ \citep{Villani2008}. Clearly $f\oplus f^c\le c$. Similarly, we could replace $f$ with $(f^c)^c$, defined analogously. The values that we will obtain in the dual problem will never decrease, i.e. $\int f\;d\mu+\int g\;d\nu\le \int f\;d\mu+\int f^c\;d\nu\le \int (f^c)^c\;d\mu+\int f^c\;d\nu\le\cdots$. Unfortunately, after repeating this process we will see that we get stuck \citep[Proposition~5.8]{Villani2008} and in general we will not reach the value $\OT(\mu,\nu)$. The great advantage of regularized optimal transport is that if we replace $\OT(\mu,\nu)$ by $\OT_{\epsilon}(\mu,\nu)$ defined in \eqref{eq:primal}, at the cost of introducing a bias, the analogue of the previous argument will in fact converge (under certain conditions) to $\OT_{\epsilon}(\mu,\nu)$.

The analogue of the $c$-transform for the problem $\OT_{\epsilon}(\mu,\nu)$ was introduced by \cite{Dimarinoetal2020} for superlinear divergences ($\phi'(\infty)=\infty$) \citep[Definition~3.1]{Dimarinoetal2020}. We generalize that definition to the case of any $f$-divergence defined by $\phi$ of Legendre type.

\begin{definition}[$(c,\epsilon,\phi)$-transform] Let $c \in \Lip(X \times Y)$, $\phi : \R \to \overline{\R}$ a proper, convex and lower semicontinuous function of Legendre type with $\phi(1)=0$, $\epsilon > 0$, $\mu \in P(X)$ and $\nu \in P(Y)$ with full supports. We define the $(c,\epsilon,\phi)$-transform $f^{(c,\epsilon,\phi)} \in \Lip(Y)$ of $f \in \Lip(X)$ as follows:
\begin{multline*}
f^{(c,\epsilon,\phi)}(y):=\argmax_{\gamma\le f^c(y)+\epsilon\phi'(\infty)} \left\{ \frac{1}{\epsilon}\gamma\right.\\\left.-\int \phi_+^*\left(\frac{1}{\epsilon}(f(x)+\gamma-c(x,y))\right)\;d\mu(x) \right\}.
\end{multline*}
\end{definition}

See Proposition~\ref{prop:properties-c-eps-phi-trans} in Appendix~\ref{appendix_proofs} for properties of the $(c,\epsilon,\phi)$-transform. Let us now check why this definition is the natural generalization of the $c$-transform. Let $(f,g)$ be a pair of potentials such that $f\oplus g\le c+\epsilon\phi'(\infty)$. It follows from the convex conjugate of $D_{\phi_+}(\cdot\|\mu\otimes\nu)$ \citep{Borweinetal1993,Agrawaletal2020} and the Young-Fenchel inequality\footnote{As any coupling $\pi$ is by definition positive, we can replace $\phi$ by $\phi_+=\phi+\iota_{\R\ge 0}$ and $D_{\phi}(\pi\|\mu\otimes\nu)=D_{\phi_+}(\pi\|\mu\otimes\nu)$.} that $\int fd\mu+\int gd\nu-\epsilon \int \phi_+^* \circ \frac{1}{\epsilon}(f\oplus g-c) d\mu\otimes\nu \le \int cd\pi+\epsilon D_{\phi}(\pi\|\mu\otimes\nu)$. Looking at the left hand side of the inequality, notice that if we try to adjust the value of $g(y)$ \textbf{pointwise} at any fixed point $y$ in such a way that $g(y)-\epsilon \int \phi_+^*(\frac{1}{\epsilon}(f(x)+g(y)-c(x,y))d\mu(x)$ is maximized we obtain precisely the $(c,\epsilon,\phi)$-transform of $f$. Hence, we have an analogous inequality as before, $\int fd\mu+\int gd\nu-\epsilon \int \phi_+^* \circ \frac{1}{\epsilon}(f\oplus g-c) d\mu\otimes\nu \le \int fd\mu+\int f^{(c,\epsilon,\phi)}d\nu-\epsilon \int \phi_+^* \circ \frac{1}{\epsilon}(f\oplus f^{(c,\epsilon,\phi)}-c) d\mu\otimes\nu$. 

The similarities do not end here, we encourage the reader to compare Proposition~\ref{prop:properties-c-eps-phi-trans} with Proposition~\ref{prop:properties-c-transform} where we have stated many properties of the $(c,\epsilon,\phi)$- and $c$-transforms, respectively. For now, let us mention how we can compute the value of the $(c,\epsilon,\phi)$-transform. The following result is $(i)$ of Proposition~\ref{prop:properties-c-eps-phi-trans}:

\emph{$f^{(c,\epsilon,\phi)}(y)$ is well-defined for all $y \in Y$ implicitly by $\int_X {\phi_+^*}' \circ \frac{1}{\epsilon}(f + f^{(c,\epsilon,\phi)}(y) - c(\cdot,y)) d\mu = 1$ if there exists such a number $f^{(c,\epsilon,\phi)}(y) \in \R$ or explicitly as $f^{(c,\epsilon,\phi)}(y)=\min_{x \in X}\{\epsilon \phi'(\infty) + c(x,y)-f(x)\}=f^c(y)+\epsilon\phi'(\infty)$ otherwise.}

This shows precisely why if $\phi'(\infty)=\infty$ this definition reduces to solving the implicit equation
\begin{equation}\label{eq:implicit_equation}
\int_X {\phi_+^*}' \circ \frac{1}{\epsilon}(f + \gamma - c(\cdot,y)) d\mu = 1
\end{equation}
for $\gamma$. However, if this is not the case, there may be cases where the $(c,\epsilon,\phi)$-transform is just the $c$-transform plus $\epsilon\phi'(\infty)$. Indeed, this behaviour can happen as we can see in Example~\ref{example}. Analogously to the $c$-subdifferential \citep[Definition~5.2]{Villani2008}, the $(c,\epsilon,\phi)$-subdifferential of $f \in \Lip(X)$ defined as $\partial_{(c,\epsilon,\phi)} f = \{ (x,y) \in X \times Y : f(x)+f^{(c,\epsilon,\phi)}(y) = c(x,y)+\epsilon \phi'(\infty) \}$ is a closed, $c$-cyclically monotone set (see Proposition~\ref{prop:c-mon}). Analogous results hold for the $(c,\epsilon,\phi)$-transform, $g^{(c,\epsilon,\phi)} \in \Lip(X)$, of $g \in \Lip(Y)$.

We can now state one of the main results of this paper, generalizing the Kantorovich duality of optimal transport (see \citet[Section~3.2]{Dimarinoetal2020} and \citet[Theorem~5.10]{Villani2008}).

\begin{theorem}[$f$-Kantorovich duality]\label{thm:f-kantorovich-duality}
Let $\mu \in P(X)$ and $\nu \in P(Y)$ be probability measures of full support on compact metric spaces $(X,d_X)$ and $(Y,d_Y)$. Let $c \in \Lip(X \times Y)$, $0 < \epsilon \in \R$ be a regularization coefficient  and $\phi : \R \to \overline{\R}$ a proper, convex and lower semicontinuous function  of Legendre type. Then one has
\begin{align*}
&\min_{\pi \in \Pi(\mu,\nu)}\left\{ \int c d\pi + \epsilon D_{\phi}(\pi \Vert \mu \otimes \nu) \right\} \\
&= \max_{\substack{f\in\Lip(X), g\in \Lip(Y) \\ f \oplus g \leq c + \epsilon \phi'(\infty)}}\left\{ \int f \oplus g d\mu \otimes \nu \right.\\ & \left.- \epsilon \int \phi_+^* \circ \frac{1}{\epsilon}(f \oplus g - c) d\mu \otimes \nu \right\} \\
&= \max_{f \in \Lip(X)}\left\{ \int f \oplus f^{(c,\epsilon,\phi)} d\mu \otimes \nu \right.\\ & \left.- \epsilon \int \phi_+^* \circ \frac{1}{\epsilon}(f \oplus f^{(c,\epsilon,\phi)} - c) d\mu \otimes \nu \right\} \\
&= \max_{g \in \Lip(Y)}\left\{ \int g^{(c,\epsilon,\phi)} \oplus g d\mu \otimes \nu \right.\\ & \left.- \epsilon \int \phi_+^* \circ \frac{1}{\epsilon}(g^{(c,\epsilon,\phi)} \oplus g - c) d\mu \otimes \nu \right\},
\end{align*}
i.e., strong duality holds and optimums in both the Primal and Dual Problems are attained. The absolutely continuous part (with respect to $\mu \otimes \nu$) $\pi_c$ of any optimal coupling $\pi$ is unique with its density given by
\begin{equation}\label{eq:density}
\frac{d\pi_c}{d\mu \otimes \nu} = {\phi_+^*}' \circ \frac{1}{\epsilon}(f \oplus g - c),
\end{equation}
where $(f,g)\in \Lip(X)\times\Lip(Y)$ are any pair of optimal potentials. Optimal potentials $(f,g)$ are such that $f \oplus g$ is unique almost everywhere with respect to $\pi_c$, and $f^{(c,\epsilon,\phi)}=g$ and $g^{(c,\epsilon,\phi)}=f$ always hold. Moreover, there exists a closed, $c$-cyclically monotone set $C$, which can be taken to be the intersection of the $(c,\epsilon,\phi)$-subdifferentials $\partial_{(c,\epsilon,\phi)} f = \partial_{(c,\epsilon,\phi)} g = \{ (x,y) \in X \times Y : f(x)+g(y) = c(x,y)+\epsilon \phi'(\infty) \}$ of all optimal couplings $(f,g)$, such that the singular part (with respect to $\mu \otimes \nu$) $\pi_s$ of any optimal coupling $\pi$ is supported on $C$, i.e., $\support(\pi_s) \subset C$.
\end{theorem}

See Theorem~\ref{thm:strong-dual-primal} and Proposition~\ref{prop:properties-optimals} in Appendix~\ref{appendix_proofs} for the proof. As a sketch, the proof of this result consists of two main parts. The first one is proving that both the Primal \eqref{eq:primal} and Dual \eqref{eq:dual} problems have the same optimum. The proof of this fact follows from convex analytic tools \citep[Theorem~2.6.1(v)]{Zalinescu2002}. To prove attainment in the Primal problem we can use a standard functional analytic argument. To prove attainment in the Dual Problem we make use of the properties of the $(c,\epsilon,\phi)$-transform given by Proposition~\ref{prop:properties-c-eps-phi-trans}. Then, uniqueness properties of the optimal couplings and potentials follow from the characterization of the subdifferentials of $f$-divergences \citep[Theorem~2.10]{Borweinetal1993} and the Young-Fenchel inequality.

\subsection{Generalized Sinkhorn algorithm}

The goal of this section is to prove that under certain conditions, given any starting pair of potentials $(f,g)$ if we start replacing $g$ with $f^{(c,\epsilon,\phi)}$, then $f$ with $(f^{(c,\epsilon,\phi)})^{(c,\epsilon,\phi)}$ and so on, we are able to recover a pair of optimal potentials of the Dual Problem and an optimal coupling for the Primal Problem. This process is called the generalized Sinkhorn algorithm. A single Sinkhorn iteration is defined as follows. Note that this definition yields a generalization of IPFP sequences \citep[Section~4]{Dimarinoetal2020} but with a stabilizing factor that will be helpful both in theory to prove convergence and in practice to prevent overflow.

\begin{definition}[Sinkhorn operator] Let $X$ and $Y$ be compact metric spaces and $\mu\in P(X)$, $\nu\in P(Y)$ be Borel probability measures of full support. Let also $c \in \Lip(X \times Y)$, $0 < \epsilon \in \R$ be a regularization coefficient  and $\phi : \R \to \overline{\R}$ a proper, convex and lower semicontinuous function  of Legendre type. Fix any point $y_0\in Y$. Given a pair $(f,g)\in \Lip(X)\times \Lip(Y)$ we define the operator $\mc{F}^{(c,\epsilon,\phi)}:\Lip(X)\times \Lip(Y)\to \Lip(X)\times \Lip(Y)$ as\footnote{Technically this operator depends as well on $y_0$, but as this is fixed and arbitrary, we decided not to include it explicitly.}
\begin{multline*}
    \mc{F}^{(c,\epsilon,\phi)}(f,g):= \\ ((f^{(c,\epsilon,\phi)}-f^{(c,\epsilon,\phi)}(y_0))^{(c,\epsilon,\phi)},f^{(c,\epsilon,\phi)}-f^{(c,\epsilon,\phi)}(y_0)).
\end{multline*}
\end{definition}

The most important properties of this operator are that if $(f',g')=\mc{F}^{(c,\epsilon,\phi)}(f,g)$ then $\|f'\|_L,\|g'\|_L,\|f'\|_{\infty}$ and $\|g'\|_{\infty}$ are uniformly bounded in terms of the diameters of $X$ and $Y$, $\|c\|_L$, $\|c\|_{\infty}$ and $\epsilon$. Also, this operator is continuous in the product topology generated by $\|\cdot\|_{\infty}$ on $\Lip(X) \times \Lip(Y)$. See Proposition~\ref{prop:sink-cpct-op} for more details.

We have seen before that iterating the $c$-transform in the classical optimal transport problem usually does not converge to a pair of optimal potentials. However, we know that using the Kullback-Leibler divergence for regularization we get convergence of the Sinkhorn algorithm to optimal potentials \citep{Cuturi2013}. As we saw before, as soon as $\phi'(\infty)<\infty$ the $(c,\epsilon,\phi)$-transform can collapse to almost the usual $c$-transform, in which case convergence is not guaranteed. Therefore, we introduce a mild condition that ensures that even in this case, the $(c,\epsilon,\phi)$-transform never collapses to the $c$-transform plus $\epsilon\phi'(\infty)$. This condition on the other hand is general enough to be able to include many examples and different $f$-divergences, and ensures that the $(c,\epsilon,\phi)$-subdifferentials are always empty. This implies that any optimal coupling is absolutely continuous with respect to $\mu \otimes \nu$, so that the optimal coupling is actually unique, as in the case $\phi'(\infty)=\infty$.

\begin{definition}[Good triple]\label{def:good_triple} Let $X$ be a compact metric space and $\mu$ a Borel probability measure on $X$. Let $\phi$ be proper, convex and lower semicontinuous function of Legendre type and suppose that $\phi'(\infty)<\infty$. Let $C>0$ be a constant. We say that $(X,\mu,\phi)$ is a \emph{good triple} with respect to $C$ if for all $x_0\in X$ one has
\[
\lim_{\delta\downarrow 0}\int_X {\phi_+^*}'(\phi'(\infty)-Cd(x_0,x)-\delta)\;d\mu(x) > 1.
\]
\end{definition}

This condition can be trivially verified if $X$ is a discrete space and the measure $\mu$ has full support. But more generally it applies to other functions $\phi$ even in general compact metric spaces. For example, if $X=[0,1]$ with the usual Lebesgue measure and the Euclidean distance then it is easy to check by hand that if $\phi$ is the function defining either the Jensen-Shannon, the squared Hellinger or the reverse Kullback-Leibler divergence then $(X,\mu,\phi)$ is a good triple with respect to any fixed constant $C$.

With this definition we can now state the main result of this section. Note that this result generalizes \citet[Theorem~4.1]{Dimarinoetal2020} to some cases where the divergence is not superlinear and it is adapted to the context of Lipschitz functions.

\begin{theorem}[Convergence of generalized Sinkhorn algorithm]\label{thm:conv_sinkhorn} Let $X$ and $Y$ be compact metric spaces and $\mu\in P(X)$, $\nu\in P(Y)$ be Borel probability measures of full support. Let also $c \in \Lip(X \times Y)$, $0 < \epsilon \in \R$ be a regularization coefficient  and $\phi : \R \to \overline{\R}$ a proper, convex and lower semicontinuous function of Legendre type. Suppose that either $\phi'(\infty)=\infty$ or  $(X,\mu,\phi)$ and $(Y,\nu,\phi)$ are good triples with respect to $2\|c\|_L/\epsilon$. Take any pair $(f_0,g_0)\in \Lip(X)\times \Lip(Y)$ and define inductively $(f_n,g_n):=\mc{F}^{(c,\epsilon,\phi)}(f_{n-1},g_{n-1})$ for $n\ge 1$. Let us also define the dual functional for any pair of functions $(f,g)\in \Lip(X)\times \Lip(Y)$ as

\[
D_{\epsilon}(f,g):= \int f\oplus g  - \epsilon \phi_+^* \circ \left( \frac{1}{\epsilon}(f\oplus g-c)\right) d\mu\otimes\nu.
\]
Then one has $D_{\epsilon}(f_n,g_n)\to \OT_{\epsilon}(\mu,\nu)$ as $n\to \infty$, and $f_n\oplus g_n\to \tilde{f} \oplus \tilde{g}$ in $L^{\infty}(\pi)$ as well with $\pi$ being the unique optimal coupling and $(\tilde{f},\tilde{g})$ any pair of optimal potentials. Moreover, $\pi$ can be recovered as $\pi = {\phi_+^*}' \circ \frac{1}{\epsilon}(\tilde{f} \oplus \tilde{g} - c) \cdot \mu \otimes \nu$.
\end{theorem}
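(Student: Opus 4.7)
\textbf{Plan for Theorem~\ref{thm:conv_sinkhorn}.} The strategy combines three ingredients: monotonicity of $D_\epsilon$ along the Sinkhorn iterates, precompactness of the iterates in the uniform topology, and identification of fixed points of $\mc{F}^{(c,\epsilon,\phi)}$ with optimal dual pairs. I would first show that one Sinkhorn step cannot decrease $D_\epsilon$. Because $f^{(c,\epsilon,\phi)}$ maximizes the $y$-slice integrand pointwise and the shift built into $\mc{F}^{(c,\epsilon,\phi)}$ leaves $f\oplus g$ (hence $D_\epsilon$) unchanged, one obtains
\[
D_\epsilon(f_{n-1},g_{n-1})\le D_\epsilon(f_{n-1},f_{n-1}^{(c,\epsilon,\phi)})\le D_\epsilon(f_n,g_n).
\]
Weak duality (part of Theorem~\ref{thm:f-kantorovich-duality}) gives $D_\epsilon\le\OT_\epsilon(\mu,\nu)$, so $D_\epsilon(f_n,g_n)\uparrow L$ for some $L\le\OT_\epsilon(\mu,\nu)$. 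Proposition~\ref{prop:sink-cpct-op} supplies uniform bounds on the Lipschitz and sup norms of $f_n,g_n$ once $n\ge 1$, so Arzel\`a--Ascoli makes $\{(f_n,g_n)\}$ precompact in $C(X)\times C(Y)$ under uniform convergence.

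Next, extract a subsequence $(f_{n_k},g_{n_k})\to(f^*,g^*)$ uniformly and, by a diagonal argument, $(f_{n_k+1},g_{n_k+1})\to(f^{**},g^{**})$. Continuity of $\mc{F}^{(c,\epsilon,\phi)}$ and of $D_\epsilon$ in the uniform topology (Proposition~\ref{prop:sink-cpct-op}) give $(f^{**},g^{**})=\mc{F}^{(c,\epsilon,\phi)}(f^*,g^*)$ and $D_\epsilon(f^*,g^*)=D_\epsilon(f^{**},g^{**})=L$, so the monotonicity chain applied to $(f^*,g^*)$ becomes a chain of equalities. The main technical obstacle lies in upgrading this to the pointwise identities $g^*=(f^*)^{(c,\epsilon,\phi)}$ and $f^*=(g^*)^{(c,\epsilon,\phi)}$, which requires the slice integrands defining the $(c,\epsilon,\phi)$-transform to be \emph{strictly} concave at the maximizer. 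This in turn requires the maximizer to lie in the interior of $\dom\phi_+^*$ where $\phi_+^*$ is strictly convex. This is precisely where the superlinear or good triple hypothesis is used: via Proposition~\ref{prop:properties-c-eps-phi-trans}(i) it forces the maximizer to be given by the implicit equation and not by the boundary value $f^c+\epsilon\phi'(\infty)$, thereby preventing the Sinkhorn iteration from collapsing to the classical $c$-transform iteration that is known not to converge to an optimum. Granted strict concavity, uniqueness of the pointwise maximizer yields $g^*=(f^*)^{(c,\epsilon,\phi)}$ $\nu$-a.e.\ and hence everywhere by continuity; the symmetric argument gives $f^*=(g^*)^{(c,\epsilon,\phi)}$.

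Once these fixed-point identities are in hand, I would construct
\[
\pi^*:={\phi_+^*}'\circ\tfrac{1}{\epsilon}(f^*\oplus g^*-c)\cdot\mu\otimes\nu.
\]
The implicit characterization of Proposition~\ref{prop:properties-c-eps-phi-trans}(i) applied to both fixed-point equations says precisely that $\pi^*\in\Pi(\mu,\nu)$. Plugging the Young--Fenchel equality $\phi_+({\phi_+^*}'(t))=t\,{\phi_+^*}'(t)-\phi_+^*(t)$ into the density formula for $D_\phi(\pi^*\Vert\mu\otimes\nu)$, and then using $\pi^*\in\Pi(\mu,\nu)$ to replace $\int(f^*\oplus g^*)\,d\pi^*$ by $\int f^*\,d\mu+\int g^*\,d\nu$, a direct computation gives
\[
\int c\,d\pi^*+\epsilon D_\phi(\pi^*\Vert\mu\otimes\nu)=D_\epsilon(f^*,g^*)=L.
\]
Weak duality and Theorem~\ref{thm:f-kantorovich-duality} then force $L=\OT_\epsilon(\mu,\nu)$ and $\pi^*$ to be an optimal coupling; combined with monotonicity this yields $D_\epsilon(f_n,g_n)\to\OT_\epsilon(\mu,\nu)$ for the full sequence.

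Finally, every subsequential cluster pair $(f^*,g^*)$ is an optimal dual pair by the preceding step, and Theorem~\ref{thm:f-kantorovich-duality} states that the sum $f\oplus g$ is uniquely determined $\pi$-a.e.\ for any optimal $(f,g)$. By uniform precompactness of $\{(f_n,g_n)\}$, every subsequence of $\{f_n\oplus g_n\}$ has a further subsequence converging uniformly on $X\times Y$ to some $f^*\oplus g^*$, and this limit coincides $\pi$-a.e.\ with $\tilde f\oplus\tilde g$; the standard subsequence-of-subsequence argument then gives $f_n\oplus g_n\to\tilde f\oplus\tilde g$ in $L^\infty(\pi)$. The density formula $\pi={\phi_+^*}'\circ\tfrac{1}{\epsilon}(\tilde f\oplus\tilde g-c)\cdot\mu\otimes\nu$ follows directly from Theorem~\ref{thm:f-kantorovich-duality} once the good triple or superlinearity hypothesis is invoked to force the singular part of $\pi$ to vanish.
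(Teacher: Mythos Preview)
Your plan is correct and mirrors the paper's proof almost step for step: monotonicity of $D_\epsilon$ along the iterates, Arzel\`a--Ascoli compactness from Proposition~\ref{prop:sink-cpct-op}, identification of subsequential limits as mutual $(c,\epsilon,\phi)$-transforms via a uniqueness-of-maximizer argument, construction of the candidate coupling from the fixed-point pair and verification of its marginals via the implicit equation, and the subsequence-of-subsequence argument for $L^\infty(\pi)$ convergence. One point to tighten: Proposition~\ref{prop:sink-cpct-op} gives continuity of $\mc{F}^{(c,\epsilon,\phi)}$ only, not of $D_\epsilon$; when $\phi'(\infty)<\infty$ the functional $D_\epsilon$ can fail to be continuous at pairs where $\tfrac{1}{\epsilon}(f\oplus g-c)$ touches $\phi'(\infty)$, and the paper secures continuity on the iterates via Proposition~\ref{prop:unif_separation}, which uses the good-triple hypothesis to produce a uniform gap $\tfrac{1}{\epsilon}(f_n\oplus g_n-c)\le\phi'(\infty)-\tau$. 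This is the same mechanism you already invoke for strict concavity, so no new idea is required---just cite it at the continuity step as well.
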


\section{EXPERIMENTS} \label{section_experiments}

\subsection{Practical implementation}
For measures with finite supports, if $\support(\mu)=\{x_1,\dots,x_k\} = X$, the potential reduces to a finite-dimensional vector $f \in \R^k$ as $f_i=f(x_i)$ (and similarly for $\nu$ and $g$). In this case, the equation \eqref{eq:implicit_equation} defining the values of $(c,\epsilon,\phi)$-transforms can always be solved \citep{Terjek2021} via Newton's method in parallel\footnote{Note that we solve for $-\frac{1}{\epsilon}\gamma$ for better stability.}, which is included here as Algorithm~\ref{algorithm_gamma}. For $\phi'(\infty)<\infty$, initial values are chosen to be just below the boundary value by some parameter $\delta > 0$. For $\phi'(\infty)=\infty$, initial values are chosen to be $\gamma_i = \log \langle e^{h_{\cdot,i}}, \xi \rangle$, which is exactly the closed-form solution of $\gamma_{\phi,\xi}(h)$ for $\phi$ corresponding to the Kullback-Leibler divergence. Theoretically, as we are minimizing a convex function any initial value will eventually converge using Newton's method. We tried several initializations and this one seemed to give the best performance and that is why we have used it. Since we are running $n$ parallel Newton's method iterations, we set the stopping criterion to be the mean of the squared Newton steps falling below a tolerance parameter $\tau$.

\begin{algorithm}[h]
   \caption{Calculate $\gamma_{\phi,\xi}(h)$}
   \label{algorithm_gamma}
\begin{algorithmic}
   \STATE {\bfseries Input:} 
   \STATE $h \in M_{m \times n}(\R)$, $\xi \in \R^m$, $\phi : \R \to \overline{\R}$, $0 < \delta, \tau \in \R$
   \STATE {\bfseries Output:}
   \STATE $\gamma_{\phi,\xi}(h) \in \R^n$
   \\\hrulefill
   \IF{$\phi'(\infty) < \infty$}
   \STATE $\gamma_i = \max(h_{\cdot,i}) - \phi'(\infty) + \delta$. 
   \ELSE
   \STATE $\gamma_i = \log \langle e^{h_{\cdot,i}}, \xi \rangle$
   \ENDIF
   \REPEAT
   \STATE $s_i = \frac{-\langle (\phi_+^*)'(h_{\cdot,i} - \gamma), \xi \rangle + 1}{\langle (\phi_+^*)''(h_{\cdot,i} - \gamma), \xi \rangle}$
   \STATE $\gamma = \gamma - s$
   \UNTIL{$\frac{1}{n} \sum_{i = 1}^n s_i^2 < \tau$}
\end{algorithmic}
\end{algorithm}

\begin{algorithm}[t]
   \caption{Generalized Sinkhorn algorithm for computing optimal potentials $f,g$ and optimal coupling $\pi$}
   \label{algorithm_sinkhorn}
\begin{algorithmic}
   \STATE {\bfseries Input:} 
   \STATE $\mu \in \R^k$, $\support(\mu)=\{x_1,\dots,x_k\} \subset X$, 
   \STATE $\nu \in \R^l$, $\support(\nu)=\{y_1,\dots,y_l\} \subset Y$, 
   \STATE $c : X \times Y \to \R$, $\phi : \R \to \overline{\R}$, $0 < \epsilon, \tau \in \R$
   \STATE {\bfseries Output:}
   \STATE $f \in \R^k$, $g \in \R^l$, $\pi \in M_{k \times l}(\R)$ 
   \\\hrulefill
   \STATE $C_{i,j} = c(x_i,y_i)$
   \STATE $f_i = 0$ 
   \REPEAT
   \STATE $f_{prev} = f$
   \STATE $g = -\epsilon \gamma_{\phi,\mu}\left(\frac{1}{\epsilon}(f \otimes \mathbf{1}^l - C)\right)$
   \STATE $g = g - g_1$
   \STATE $f = -\epsilon \gamma_{\phi,\nu}\left(\frac{1}{\epsilon}(g \otimes \mathbf{1}^k - C^*)\right)$
   \UNTIL{$\Vert f - f_{prev} \Vert_\infty < \tau$}
   \STATE $\pi_{i,j} = {\phi_+^*}'(\frac{1}{\epsilon}(f_i + g_j - C_{i,j})) \mu_i \nu_j$
\end{algorithmic}
\end{algorithm}

We propose a practical implementation of the generalized Sinkhorn algorithm in Algorithm~\ref{algorithm_sinkhorn}. In both algorithms, vectors are understood as row vectors, and statements containing indices $i$ and/or $j$ are to be executed for each index value in parallel. The vectors $\mathbf{1}^l$ and $\mathbf{1}^k$ represent column vectors of dimension $l$ and $k$ with all their coordinates equal to 1, and thus their tensor products with row vectors of dimension $k$ and $l$ give matrices of dimension $k\times l$ and $l \times k$, respectively. We denote the adjoint of $C \in M_{k \times l}(\R)$ by $C^* \in M_{l \times k}(\R)$. Since our convergence results are in terms of the infinity norm, we set the stopping criterion to be the infinity norm of the difference of the potantials falling below a given tolerance parameter $\tau$.

\subsection{Experimental setup}
To demonstrate the feasibility of the approach, we apply the algorithm to synthetic 2-dimensional data obtained from \url{https://github.com/jeanfeydy/global-divergences}, the official codebase of \cite{Feydyetal2018}\footnote{The data is used according to its terms of use, which can be found following the link above.}. The data consists of 4 pairs of densities on $\R^2$, nicknamed "crescents", "densities", "moons" and "slopes". The task with each pair is to compute the regularized optimal transport problem between measures obtained by sampling a set of points independently from each density. Using different $f$-divergences and $\epsilon$s influences many aspects of the task, which are detailed below. In all examples, the cost function is $c(x,y) = \frac{1}{2}\Vert x-y \Vert_2^2$, i.e. half of the squared Euclidean distance on the plane. We consider classical $f$-divergences defined by $\phi$ of Legendre type, specifically the Kullback-Leibler, reverse Kullback-Leibler, $\chi^2$ (or Neyman $\chi^2$), reverse $\chi^2$ (or Pearson $\chi^2$), squared Hellinger, Jensen-Shannon, Jeffreys and triangular discrimination (or Vincze-Le Cam) divergences. The corresponding functions needed for the algorithms (such as $\phi_+^*$ and its first and second derivatives) are collected in Appendix~\ref{appendix_phis}. 

The source code to reproduce the experimental results can be found at \url{https://github.com/renyi-ai/optimal-transport-with-f-divergence-regularization-and-generalized-sinkhorn-algorithm}. In order to make the experiments more robust, for each one of the four different densities, each $f$-divergence and a range of $\epsilon$s we run the experiments with four different point cloud sizes (500, 1000, 2000 and 5000) and five different random seeds (which determine the point clouds sampled from the densities). In Appendix~\ref{appendix_results} we include a detailed account of the data, the hyperparameters and the experimental results. All experiments were run on NVIDIA A100 40GB SXM GPUs.

\begin{figure*}[!t]
     \centering
     \begin{subfigure}[b]{0.49\textwidth}
         \centering
\begin{tikzpicture}
\begin{axis}[
	axis x line*=bottom,
	axis y line*=left,
	error bars/y dir=both,
	error bars/y explicit,
    xmode = log,
    x dir=reverse,
    xlabel = $\epsilon$,
    x label style={at={(current axis.left of origin)},anchor=east, above=0mm},
    ylabel = $\int c d\pi$,
    y label style={at={(axis description cs:0.175,1.05)},anchor=west,rotate=-90},
    yticklabel style={rotate=90},
]
\addplot[orange, thick] table [x=epsilon, y=cost_avg, y error=cost_std, col sep=comma] {csv/kl_crescents_avg_and_std_over_random_seed_and_size_and_dataset_filtered.csv};
\addplot[blue, thick] table [x=epsilon, y=cost_avg, y error=cost_std, col sep=comma] {csv/rkl_crescents_avg_and_std_over_random_seed_and_size_and_dataset_filtered.csv};
\addplot[red, thick] table [x=epsilon, y=cost_avg, y error=cost_std, col sep=comma] {csv/chi2_crescents_avg_and_std_over_random_seed_and_size_and_dataset_filtered.csv};
\addplot[cyan, thick] table [x=epsilon, y=cost_avg, y error=cost_std, col sep=comma] {csv/rchi2_crescents_avg_and_std_over_random_seed_and_size_and_dataset_filtered.csv};
\addplot[magenta, thick] table [x=epsilon, y=cost_avg, y error=cost_std, col sep=comma] {csv/hellinger2_lowtol_crescents_avg_and_std_over_random_seed_and_size_and_dataset_filtered.csv};
\addplot[green, thick] table [x=epsilon, y=cost_avg, y error=cost_std, col sep=comma] {csv/js_lowtol_crescents_avg_and_std_over_random_seed_and_size_and_dataset_filtered.csv};
\addplot[violet, thick] table [x=epsilon, y=cost_avg, y error=cost_std, col sep=comma] {csv/jeffreys_crescents_avg_and_std_over_random_seed_and_size_and_dataset_filtered.csv};
\addplot[teal, thick] table [x=epsilon, y=cost_avg, y error=cost_std, col sep=comma] {csv/triangular_crescents_avg_and_std_over_random_seed_and_size_and_dataset_filtered.csv};
\end{axis}
\end{tikzpicture}
         \caption{Cost of optimal coupling}
         \label{fig:cost}
     \end{subfigure}
     \hfill
     \begin{subfigure}[b]{0.49\textwidth}
         \centering
\begin{tikzpicture}
\begin{axis}[
	axis x line*=bottom,
	axis y line*=left,
	error bars/y dir=both,
	error bars/y explicit,
    xmode = log,
    x dir=reverse,
    xlabel = $\epsilon$,
    x label style={at={(current axis.left of origin)},anchor=east, above=0mm},
	ymode = log, 
    ylabel = time (s),
    y label style={at={(axis description cs:0.175,1.05)},anchor=west,rotate=-90},
    yticklabel style={rotate=90},
]
\addplot[orange, thick] table [x=epsilon, y=time_avg, y error=time_std, col sep=comma] {csv/kl_crescents_avg_and_std_over_random_seed_and_size_and_dataset_filtered.csv};
\addplot[blue, thick] table [x=epsilon, y=time_avg, y error=time_std, col sep=comma] {csv/rkl_crescents_avg_and_std_over_random_seed_and_size_and_dataset_filtered.csv};
\addplot[red, thick] table [x=epsilon, y=time_avg, y error=time_std, col sep=comma] {csv/chi2_crescents_avg_and_std_over_random_seed_and_size_and_dataset_filtered.csv};
\addplot[cyan, thick] table [x=epsilon, y=time_avg, y error=time_std, col sep=comma] {csv/rchi2_crescents_avg_and_std_over_random_seed_and_size_and_dataset_filtered.csv};
\addplot[magenta, thick] table [x=epsilon, y=time_avg, y error=time_std, col sep=comma] {csv/hellinger2_lowtol_crescents_avg_and_std_over_random_seed_and_size_and_dataset_filtered.csv};
\addplot[green, thick] table [x=epsilon, y=time_avg, y error=time_std, col sep=comma] {csv/js_lowtol_crescents_avg_and_std_over_random_seed_and_size_and_dataset_filtered.csv};
\addplot[violet, thick] table [x=epsilon, y=time_avg, y error=time_std, col sep=comma] {csv/jeffreys_crescents_avg_and_std_over_random_seed_and_size_and_dataset_filtered.csv};
\addplot[teal, thick] table [x=epsilon, y=time_avg, y error=time_std, col sep=comma] {csv/triangular_crescents_avg_and_std_over_random_seed_and_size_and_dataset_filtered.csv};
\end{axis}
\end{tikzpicture}
         \caption{Runtime in seconds}
         \label{fig:runtime}
     \end{subfigure}
     \hfill
     
     \begin{subfigure}[b]{0.49\textwidth}
         \centering
\begin{tikzpicture}
\begin{axis}[
	axis x line*=bottom,
	axis y line*=left,
	error bars/y dir=both,
	error bars/y explicit,
    xmode = log,
    x dir=reverse,
    xlabel = $\epsilon$,
    x label style={at={(current axis.left of origin)},anchor=east, above=0mm},
    ylabel = ratio,
    y label style={at={(axis description cs:0.175,1.05)},anchor=west,rotate=-90},
    yticklabel style={rotate=90},
]
\addplot[orange, thick] table [x=epsilon, y=pi_nonzeros_ratio_avg, y error=pi_nonzeros_ratio_std, col sep=comma] {csv/kl_crescents_avg_and_std_over_random_seed_and_size_and_dataset_filtered.csv};
\addplot[blue, thick] table [x=epsilon, y=pi_nonzeros_ratio_avg, y error=pi_nonzeros_ratio_std, col sep=comma] {csv/rkl_crescents_avg_and_std_over_random_seed_and_size_and_dataset_filtered.csv};
\addplot[red, thick] table [x=epsilon, y=pi_nonzeros_ratio_avg, y error=pi_nonzeros_ratio_std, col sep=comma] {csv/chi2_crescents_avg_and_std_over_random_seed_and_size_and_dataset_filtered.csv};
\addplot[cyan, thick] table [x=epsilon, y=pi_nonzeros_ratio_avg, y error=pi_nonzeros_ratio_std, col sep=comma] {csv/rchi2_crescents_avg_and_std_over_random_seed_and_size_and_dataset_filtered.csv};
\addplot[magenta, thick] table [x=epsilon, y=pi_nonzeros_ratio_avg, y error=pi_nonzeros_ratio_std, col sep=comma] {csv/hellinger2_lowtol_crescents_avg_and_std_over_random_seed_and_size_and_dataset_filtered.csv};
\addplot[green, thick] table [x=epsilon, y=pi_nonzeros_ratio_avg, y error=pi_nonzeros_ratio_std, col sep=comma] {csv/js_lowtol_crescents_avg_and_std_over_random_seed_and_size_and_dataset_filtered.csv};
\addplot[violet, thick] table [x=epsilon, y=pi_nonzeros_ratio_avg, y error=pi_nonzeros_ratio_std, col sep=comma] {csv/jeffreys_crescents_avg_and_std_over_random_seed_and_size_and_dataset_filtered.csv};
\addplot[teal, thick] table [x=epsilon, y=pi_nonzeros_ratio_avg, y error=pi_nonzeros_ratio_std, col sep=comma] {csv/triangular_crescents_avg_and_std_over_random_seed_and_size_and_dataset_filtered.csv};
\end{axis}
\end{tikzpicture}
         \caption{Sparsity of optimal coupling}
         \label{fig:ratio}
     \end{subfigure}
     \hfill
     \begin{subfigure}[b]{0.49\textwidth}
         \centering
\begin{tikzpicture}
\begin{axis}[
	axis x line*=bottom,
	axis y line*=left,
	error bars/y dir=both,
	error bars/y explicit,
    xmode = log,
    x dir=reverse,
    xlabel = $\epsilon$,
    x label style={at={(current axis.left of origin)},anchor=east, above=0mm},
	ymode = log, 
    ylabel = marginal error,
    y label style={at={(axis description cs:0.175,1.05)},anchor=west,rotate=-90},
    yticklabel style={rotate=90},
]
\addplot[violet, thick] table [x=epsilon, y=marginal_error_1_avg, y error=marginal_error_1_std, col sep=comma] {csv/jeffreys_crescents_avg_and_std_over_random_seed_and_size_and_dataset_filtered.csv};
\addplot[orange, thick] table [x=epsilon, y=marginal_error_1_avg, y error=marginal_error_1_std, col sep=comma] {csv/kl_crescents_avg_and_std_over_random_seed_and_size_and_dataset_filtered.csv};
\addplot[blue, thick] table [x=epsilon, y=marginal_error_1_avg, y error=marginal_error_1_std, col sep=comma] {csv/rkl_crescents_avg_and_std_over_random_seed_and_size_and_dataset_filtered.csv};
\addplot[red, thick] table [x=epsilon, y=marginal_error_1_avg, y error=marginal_error_1_std, col sep=comma] {csv/chi2_crescents_avg_and_std_over_random_seed_and_size_and_dataset_filtered.csv};
\addplot[cyan, thick] table [x=epsilon, y=marginal_error_1_avg, y error=marginal_error_1_std, col sep=comma] {csv/rchi2_crescents_avg_and_std_over_random_seed_and_size_and_dataset_filtered.csv};
\addplot[magenta, thick] table [x=epsilon, y=marginal_error_1_avg, y error=marginal_error_1_std, col sep=comma] {csv/hellinger2_lowtol_crescents_avg_and_std_over_random_seed_and_size_and_dataset_filtered.csv};
\addplot[green, thick] table [x=epsilon, y=marginal_error_1_avg, y error=marginal_error_1_std, col sep=comma] {csv/js_lowtol_crescents_avg_and_std_over_random_seed_and_size_and_dataset_filtered.csv};
\addplot[teal, thick] table [x=epsilon, y=marginal_error_1_avg, y error=marginal_error_1_std, col sep=comma] {csv/triangular_crescents_avg_and_std_over_random_seed_and_size_and_dataset_filtered.csv};
\end{axis}
\end{tikzpicture}
         \caption{Marginal error}
         \label{fig:marginalerror}
     \end{subfigure}

\begin{tikzpicture}
\begin{axis}[
	axis line style={draw=none},
	tick style={draw=none},
	yticklabels={,,},
	xticklabels={,,},
    y=0.01cm,
	legend style={at={(0.5,-0.175)},anchor=north},
	legend columns=4,
    legend entries={Kullback-Leibler,reverse Kullback-Leibler,$\chi^2$,reverse $\chi^2$,squared Hellinger,Jensen-Shannon,Jeffreys,triangular discrimination},
]
\addplot[orange, thick, draw=none] table [x=epsilon, y=pi_nonzeros_ratio_avg, col sep=comma] {csv/kl_crescents_avg_and_std_over_random_seed_and_size_and_dataset_filtered.csv};
\addplot[blue, thick, draw=none] table [x=epsilon, y=pi_nonzeros_ratio_avg, col sep=comma] {csv/rkl_crescents_avg_and_std_over_random_seed_and_size_and_dataset_filtered.csv};
\addplot[red, thick, draw=none] table [x=epsilon, y=pi_nonzeros_ratio_avg, col sep=comma] {csv/chi2_crescents_avg_and_std_over_random_seed_and_size_and_dataset_filtered.csv};
\addplot[cyan, thick, draw=none] table [x=epsilon, y=pi_nonzeros_ratio_avg, col sep=comma] {csv/rchi2_crescents_avg_and_std_over_random_seed_and_size_and_dataset_filtered.csv};
\addplot[magenta, thick, draw=none] table [x=epsilon, y=pi_nonzeros_ratio_avg, col sep=comma] {csv/hellinger2_lowtol_crescents_avg_and_std_over_random_seed_and_size_and_dataset_filtered.csv};
\addplot[green, thick, draw=none] table [x=epsilon, y=pi_nonzeros_ratio_avg, col sep=comma] {csv/js_lowtol_crescents_avg_and_std_over_random_seed_and_size_and_dataset_filtered.csv};
\addplot[violet, thick, draw=none] table [x=epsilon, y=pi_nonzeros_ratio_avg, col sep=comma] {csv/jeffreys_crescents_avg_and_std_over_random_seed_and_size_and_dataset_filtered.csv};
\addplot[teal, thick, draw=none] table [x=epsilon, y=pi_nonzeros_ratio_avg, col sep=comma] {csv/triangular_crescents_avg_and_std_over_random_seed_and_size_and_dataset_filtered.csv};
\end{axis}
\end{tikzpicture}

        \caption{Experimental results}
        \label{fig:results}
\end{figure*}
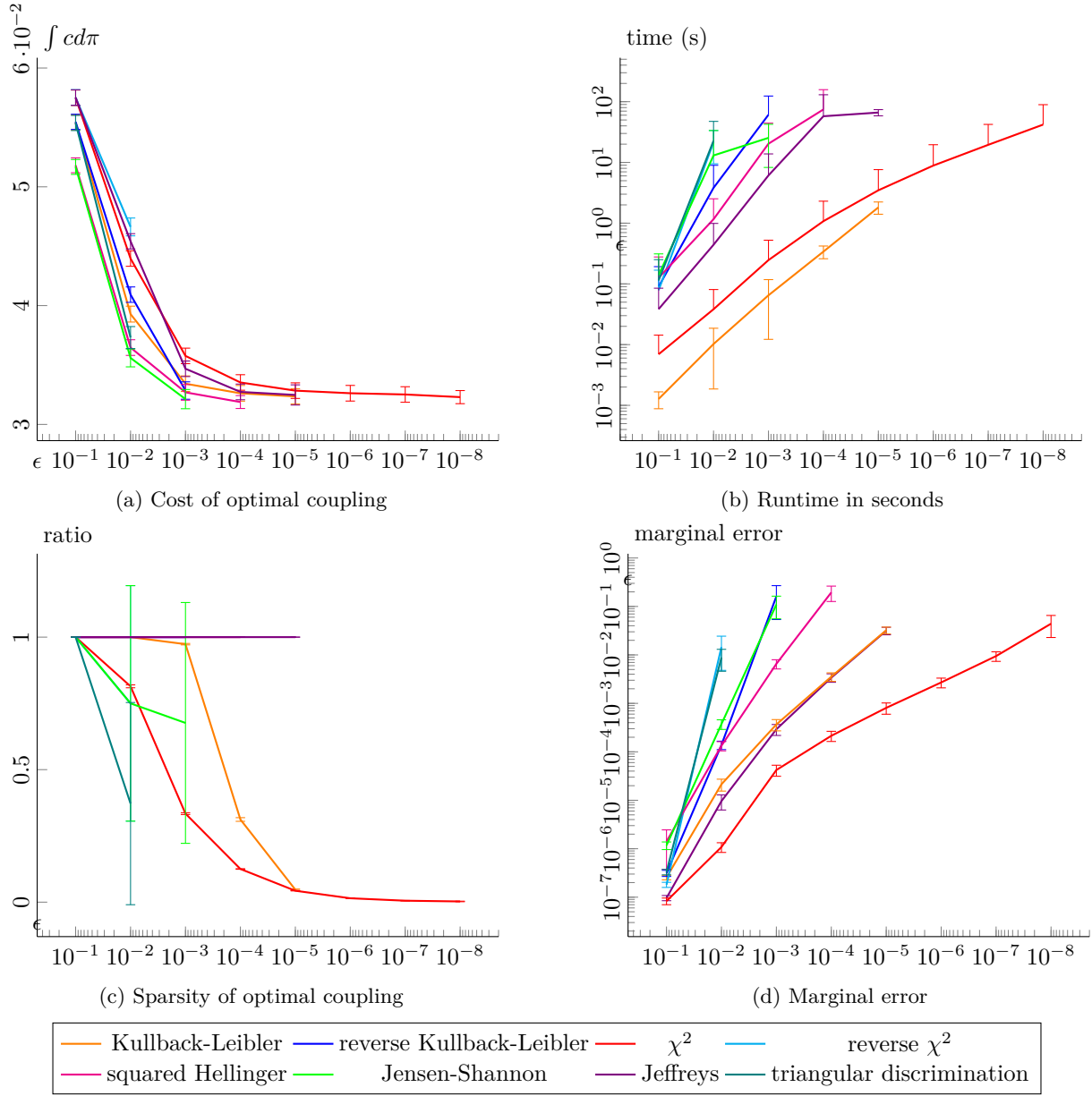

\subsection{Cost of optimal coupling and convergence speed}
Entropic regularization introduces a tradeoff between convergence speed of the Sinkhorn algorithm and bias in the optimal coupling. Increasing $\epsilon$ leads to faster convergence, but pushes the optimal coupling further away from the coupling which is optimal in the unregularized problem. In Figure~\ref{fig:cost} and Figure~\ref{fig:runtime} we can see, depending on $\epsilon$, the cost of the coupling obtained (i.e., $\int c d\pi$) as well as the time needed to compute it (in seconds). The values presented correspond to the "crescents" density pair, with means and standard deviations computed over all random seeds and pointcloud sizes\footnote{Different densities lead to markedly different ranges of costs of optimal couplings, which is why we did not average over them. Results for the other 3 pairs of densities can be found in Appendix~\ref{appendix_results}. The size of the point clouds sampled from the densities determines the memory requirements but seems to have little effect on convergence speed, which is why we averaged over this hyperparameter.}.  

We eliminated\footnote{We compute the marginal error with respect to $\nu$, because the order of $(c,\epsilon,\phi)$-transforms makes the marginal error with respect to $\mu$ vanish.} the data of couplings with a marginal error $\sum_{j = 1}^l \vert \langle \pi_{\cdot,j}, \mathbf{1}^k \rangle - \nu_j \vert$ greater than 0.2. In theory the optimal coupling should be (as its name says) a coupling, but for small $\epsilon$ the couplings obtained had marginals that differ greatly from its theoretical values $\mu$ and $\nu$. In these cases, one needs to set a lower tolerance parameter $\tau$ for Algorithm~\ref{algorithm_sinkhorn} in order to obtain a coupling with negligible marginal error. In Appendix~\ref{appendix_results} we include a more detailed account of this issue, and an additional experiment that shows the visual manifestation of the bias by pushing forward one of the point clouds through the transportation map defined by the gradient of $\int c d\pi$.

From the data it seems that the Kullback-Leibler and $\chi^2$ divergences lead to faster convergence compared to the others by a large margin. It is not surprising that Kullback-Leibler is the fastest, since $\gamma_{\phi,\xi}(h)$ is available in closed form for in this case, and no Newton's method iterations are needed. On the other hand, $\chi^2$ allows choosing $\epsilon$ from a much larger interval.

\subsection{Sparsity of optimal coupling and marginal error}
Let us first informally discuss why some divergences lead naturally to sparse optimal solutions. Recall from \eqref{eq:implicit_equation} that when computing the $(c,\epsilon,\phi)$-transform we are solving the problem of finding some $\gamma_y$ such that $\int {\phi_+^*}'\circ \frac{1}{\epsilon}(f+\gamma_y-c(\cdot,y))d\mu=1$. If $\epsilon$ starts to decrease, the argument of ${\phi_+^*}'$ increases. Thus, for the integral to be 1 (guaranteed by the theory) and since ${\phi_+^*}'$ is monotonic, $(f+\gamma_y-c(\cdot,y))$ needs to have small values. When ${\phi_+^*}'$ has 0 in its range, such as for the $\chi^2$ and triangular discrimination divergences, this will typically force ${\phi_+^*}'((f+\gamma_y-c(\cdot,y)))$ to be 0. Therefore, by equation \eqref{eq:density}, the density matrix of the optimal coupling will be sparse.

For the $\chi^2$ divergence, one has ${\phi_+^*}'((-\infty,-2])=0$, and for the triangular discrimination divergence, one has ${\phi_+^*}'((-\infty,-3])=0$. For all other divergences considered, one always has ${\phi_+^*}'>0$. Since the density of the optimal coupling is obtained as the image of ${\phi_+^*}'$, this leads to sparse couplings in the former case. In the latter, the optimal couplings are strictly positive. When using $\int c d\pi$ as a loss function, sparsity in the coupling $\pi$ leads to sparsity in the gradient tensors. This can be useful in practical scenarios as most automatic differentiation engines contain implementations of subroutines tailored for sparse tensors, which can be used in these cases to increase efficiency. An example is when a practitioner uses $\int c d\pi$ as the loss function with $\mu$ being a pointcloud output by a neural network and $\nu$ being a ground truth point cloud. In this case, if the coupling $\pi$ is not sparse, any point of $\mu$ receives backpropagated gradients from most of the points of $\nu$, whereas if $\pi$ is sparse, then it only receives gradients from a few of them.

Quantitatively, sparsity in terms of the quotient of positive elements to all elements in the optimal couplings and marginal errors obtained in the experiments are visualized in Figure~\ref{fig:ratio} and Figure~\ref{fig:marginalerror}. As expected, the $\chi^2$ and triangular discrimination divergences naturally lead to sparse couplings. A consequence of limited machine precision is that the couplings will be empirically sparse even in other cases, notably for the Kullback-Leibler divergence, which reaches the same sparsity as $\chi^2$ at $\epsilon=10^{-5}$. However, for this $\epsilon$, the marginal error in the Kullback-Leibler case is 40 times larger than for $\chi^2$. For greater values of $\epsilon$ (and therefore shorter running time), couplings obtained using $\chi^2$ are more sparse by a large margin.

\subsection{Conclusions}
The classical setup using the Kullback-Leiber divergence is the fastest to compute and gives low costs in general terms. However, the $\chi^2$ divergence, albeit being marginally slower, can obtain a similar cost but with a much more sparse coupling for values of $\epsilon$ corresponding to shorter running times. As we discussed above, the optimal coupling can be used to compute the gradient tensor with respect to the cost and thus a sparse tensor could lead to benefits using subroutines tailored for sparse tensors present in most automatic differentiation engines. Other $f$-divergences do not seem to induce practical benefits from this limited set of experiments (intended to showcase the feasibility of the generalized Sinkhorn algorithm), but may turn out to be useful in other scenarios.

\section{LIMITATIONS} \label{section_limitations}

From the theoretical side, the main limitation of our paper is the assumption that the cost function is Lipschitz. We explained in the corresponding section the reasons why we decided to work in this setup. A more general theory may be able to include lower semicontinuous costs, but we did not pursue this in the present work. The Legendre type assumption on $\phi$ excludes the total variation divergence, but it is necessary in order to have a well-defined $(c,\epsilon,\phi)$-transform. Another limitation in our work is that while we believe that the Sinkhorn algorithm may fail to converge to optimal variables if no condition like the Good Triple is assumed, we did not present an explicit example of this behavior. Finally, we did not study the theoretical complexity of the generalized Sinkhorn algorithm, $\Gamma$-convergence of $\OT_\epsilon$ to $\OT_0$, explicit formulas of the subdifferential of $\OT_{\epsilon}(\cdot,\nu)$, nor the generalization of Sinkhorn divergences. We leave these for future projects.

On the practical part, we believe that the implementation of Newton's method could be optimized for each $\phi$. For the tolerances, there should be at least a heuristic way of choosing them in terms of $\epsilon$ and $\phi$ in order to have the marginal conditions satisfied at convergence.

\subsubsection*{Acknowledgements}
D\'avid Terj\'ek is supported by the Hungarian National Excellence Grant 2018-1.2.1-NKP-00008 and by the Hungarian Ministry of Innovation and Technology NRDI Office within the framework of the Artificial Intelligence National Laboratory Program. Diego Gonz\'alez-S\'anchez is supported by projects KPP 133921 and Momentum (Lend\"{u}let) 30003.

The authors would like to thank Mih\'aly Weiner from the Department of Mathematical Analysis at Budapest University of Technology and Economics for his help and in particular for proposing Example~\ref{example}, as well as the anonymous reviewers for their useful comments.

\clearpage


\bibliographystyle{apalike}
\bibliography{934}


\clearpage
\appendix

\thispagestyle{empty}

\onecolumn \makesupplementtitle

\section{Mathematical background} \label{appendix_background}

\subsection{Functional analysis}

We are going to recite some of the most important results that we need for our paper. We refer the reader to standard reference in the area \citep{Cobzasetal2019} for a detailed account on them.

In this paper, $X$ and $Y$ will denote compact metric spaces except if noted otherwise. We will be interested in studying the set of Lipschitz functions on these sets. We say that a function $f:X\to \mb{R}$ is Lipschitz if there exists $K\ge 0$ such that $|f(x)-f(x')|\le Kd_X(x,x')$ for all $x,x'\in X$ where $d_X$ is a metric on $X$. We will denote the set of Lipschitz functions on $X$ by $\Lip(X)$. The optimal $K$ such that the above holds will be the Lipschitz constant of $f$, $\|f\|_L=\sup_{x\not=x'}\left\{\frac{|f(x)-f(x')|}{d_X(x,x')}\right\}$. Clearly this definition depends on the metric $d_X$ (resp. $d_Y$) chosen on $X$ (resp. $Y$) but for our purposes we will fix some metric on $X$ (resp. $Y$) and all the Lipschitz constants will be relative to it. Furthermore, in the product space $X\times Y$ we will assume that we have a metric $d_{X\times Y}$ such that $d_{X\times Y}((x,y),(x,y')) = d_Y(y,y')$ (and similarly fixing $y$ instead of $x$). For example, it can be assumed for the rest of the paper that $d_{X\times Y}((x,y),(x',y')) = \max\{d_X(x,x'),d_Y(y,y')\}$.

As $X$ and $Y$ are compact spaces, it will always make sense to talk also about the $\|\cdot\|_{\infty}$ norm of a Lipschitz function on $X$ or $Y$ (and it will always be finite). Thus, we define $\|f\|_{\infty}:=\sup_{x\in X}\{|f(x)|\}$. We can combine the Lipschitz constant with the supremum norm to create the following norm on $\Lip(X)$ (resp. $\Lip(Y)$), $\|f\|_{\max}:=\max\{\|f\|_L,\|f\|_{\infty}\}$.

For any compact metric space $(X,d_X)$ we will denote by $\mc{B}(X)$ the Borel $\sigma$-algebra on $X$. A measure on $X$ is a function $\mu:\mc{B}(X)\to \mb{R}$ such that $\mu(\emptyset)=0$ and for pairwise disjoint elements $(A_i\in \mc{B}(X))_{i\ge1}$ we have $\mu(\cup_{i=1}^{\infty}A_i)=\sum_{i=1}^{\infty} \mu(A_i)$. The variation of a measure $\mu$ is defined by:

\[
|\mu|(A) := \sup_{A=\cup_{i=1}^nB_i} \sum_{i=1}^n |\mu(B_i)|
\]
where the $B_i$ are pairwise disjoint and the supremum is taken over all possible partitions. The total variation of $\mu$ is then defined as $\|\mu\|:=|\mu|(X)$. We will denote by $\mc{M}(X)$ the set of Borel measures on $X$ with finite total variation. Similarly, $\mc{M}_+(X)$ will denote the subset of measures $\mu\in \mc{M}(X)$ such that $\mu\ge 0$ and $\mc{M}(X,\xi)$ for some $\xi \in \mb{R}$ will denote the set of measures such that $\mu(X)=\xi$. Finally $P(X)$ will denote the set of probability measures on $X$, i.e., $\mc{M}_+(X)\cap \mc{M}(X,1)$.

Given two measures $\mu,\nu\in \mc{M}(X)$ we will say that $\mu$ is absolutely continuous with respect to $\nu$ and denote it by $\mu\ll \nu$ if for all $A\in \mc{B}(X)$ if $\nu(A)=0$ then $\mu(A)=0$. In this case, we will denote the Radon-Nikodym derivative as $\frac{d\mu}{d\nu}\in L^1(\nu)$. We will say that two measures $\mu,\nu\in \mc{M}(X)$ are singular to each other when there exists $A\in \mc{B}(X)$ such that $|\mu|(A)=0$ and $|\nu|(X\setminus A)=0$. Given a measure $\mu\in \mc{M}(X)$ for a compact metric space $X$ we define its support as $\support(\mu):= X\setminus(\cup_{\{U\subset X \text{ open and }|\mu|(U)=0\}}U)$. Note that this is always a closed and compact subset of $X$.

For any $\mu\in \mc{M}(X,0)$ we are interested in defining

\[
\|\mu\|_{KR}:=\sup \left\{ \int f \;d\mu : \|f\|_L\le 1  \right\}.
\]
With this, we can define the Hanin norm, which will be central in this paper. Given $\mu\in \mc{M}(X)$ we define

\[
\|\mu\|_H := \inf_{\nu\in \mc{M}(X,0)} \{ \|\nu\|_{KR}+\|\mu-\nu\|\}.
\]
The importance of this norm relies on the following theorem:

\begin{theorem*} Let $(X,d_X)$ be a compact metric space. Then

\[
(\Lip(X),\|\cdot\|_{\max})\simeq (\mc{M}(X),\|\cdot\|_H)^*
\]
and furthermore there is an isometric linear isomorphism given by the mapping that sends $f\in \Lip(X)$ to the functional $\mu\mapsto \int f\;d\mu$.
\end{theorem*}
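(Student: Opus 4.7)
Write $\Psi : \Lip(X) \to \mc{M}(X)^*$ for the candidate map $\Psi(f) = T_f$ with $T_f(\mu) = \int f\,d\mu$. Linearity of $\Psi$ is immediate, and the plan is to check in order: (i) the bound $\|T_f\|_{op} \le \|f\|_{\max}$, (ii) the matching lower bound (giving the isometry and thus injectivity), and (iii) surjectivity. For (i), given $\mu \in \mc{M}(X)$ and any $\nu \in \mc{M}(X,0)$, split $\int f\,d\mu = \int f\,d\nu + \int f\,d(\mu-\nu)$. The first integral obeys $|\int f\,d\nu| \le \|f\|_L\,\|\nu\|_{KR}$ by the definition of $\|\cdot\|_{KR}$ (the constraint $\nu(X)=0$ lets us rescale $f$ by $\|f\|_L^{-1}$ without worrying about an additive constant), and the second obeys $|\int f\,d(\mu-\nu)| \le \|f\|_\infty\,\|\mu-\nu\|$. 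Adding and using $\max(\|f\|_L,\|f\|_\infty) = \|f\|_{\max}$ gives $|T_f(\mu)| \le \|f\|_{\max}(\|\nu\|_{KR} + \|\mu-\nu\|)$; taking the infimum over $\nu \in \mc{M}(X,0)$ yields $|T_f(\mu)| \le \|f\|_{\max}\|\mu\|_H$.

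For the matching lower bound, test $T_f$ on two families. Taking $\nu=0$ in the Hanin infimum shows $\|\delta_x\|_H \le \|\delta_x\| = 1$, so $|f(x)| = |T_f(\delta_x)| \le \|T_f\|_{op}$ for every $x$, hence $\|f\|_\infty \le \|T_f\|_{op}$. For $x \ne x'$ the signed measure $\delta_x - \delta_{x'}$ lies in $\mc{M}(X,0)$, so substituting it as the $\nu$ in the Hanin infimum gives $\|\delta_x - \delta_{x'}\|_H \le \|\delta_x - \delta_{x'}\|_{KR} \le d_X(x,x')$, the last inequality obtained by testing against any $g$ with $\|g\|_L \le 1$ (the sharp value is realized by $g(z)=d_X(z,x')$). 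Hence $|f(x)-f(x')| = |T_f(\delta_x - \delta_{x'})| \le \|T_f\|_{op}\,d_X(x,x')$, giving $\|f\|_L \le \|T_f\|_{op}$. Combining, $\|f\|_{\max} \le \|T_f\|_{op}$, which completes the isometry and delivers injectivity for free.

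For surjectivity, take any $\Phi \in (\mc{M}(X),\|\cdot\|_H)^*$ and define the candidate preimage $f(x) := \Phi(\delta_x)$. The Dirac and Dirac-difference estimates above, applied to $\Phi$ in place of $T_f$, show $\|f\|_\infty \le \|\Phi\|_{op}$ and $\|f\|_L \le \|\Phi\|_{op}$, so $f \in \Lip(X)$ with $\|f\|_{\max} \le \|\Phi\|_{op}$. By linearity, $T_f$ and $\Phi$ agree on the linear span of Dirac measures, i.e.\ on the finitely supported signed measures. Since both functionals are $\|\cdot\|_H$-continuous (the continuity of $T_f$ was established in step (i)), they agree on the $\|\cdot\|_H$-closure of that span, so the whole argument reduces to showing that finitely supported signed measures are $\|\cdot\|_H$-dense in $\mc{M}(X)$. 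I expect this density claim to be the main obstacle, since $(\mc{M}(X),\|\cdot\|_H)$ is typically not complete and the Hanin norm is weaker than total variation. The route I would take is: decompose $\mu = \mu^+ - \mu^-$ via the Jordan decomposition; approximate each positive part by empirical measures $\frac{1}{n}\sum \delta_{X_i}$ with $X_i$ sampled from $\mu^\pm/\mu^\pm(X)$; and use that on a compact metric space the Hanin norm metrizes weak-$*$ convergence on bounded subsets of $\mc{M}(X)$ (developed in detail in \cite{Cobzasetal2019}), which promotes weak-$*$ convergence of the empirical approximants to convergence in $\|\cdot\|_H$.
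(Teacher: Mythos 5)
Your argument is correct, and it is worth noting that the paper does not actually prove this statement: it is quoted as background (Hanin's duality theorem) with the proof delegated to \cite{Cobzasetal2019}, so any self-contained proof is ``a different route'' by default. Your steps (i) and (ii) are exactly the standard argument and are complete as written: the splitting $\int f\,d\mu=\int f\,d\nu+\int f\,d(\mu-\nu)$ against an arbitrary $\nu\in\mc{M}(X,0)$ gives $\|T_f\|_{op}\le\|f\|_{\max}$, and testing on $\delta_x$ and on $(\delta_x-\delta_{x'})/d_X(x,x')$ (both of Hanin norm at most $1$) gives the reverse inequality. For step (iii), the reduction of surjectivity to $\|\cdot\|_H$-density of finitely supported measures is the right move, but the route you sketch for the density is heavier than needed: you do not need random empirical measures, almost-sure weak-$*$ convergence, or the metrization of weak-$*$ convergence by $\|\cdot\|_H$ on bounded sets. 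After the Jordan decomposition, a deterministic quantization suffices: partition the compact space into finitely many Borel sets $A_1,\dots,A_N$ of diameter at most $\delta$, pick $x_i\in A_i$, and set $\mu_\delta=\sum_i\mu(A_i)\delta_{x_i}$; then $\mu-\mu_\delta\in\mc{M}(X,0)$ and for any $g$ with $\|g\|_L\le 1$ one has $\bigl|\int g\,d(\mu-\mu_\delta)\bigr|\le\sum_i\int_{A_i}|g-g(x_i)|\,d\mu\le\delta\,\mu(X)$, so $\|\mu-\mu_\delta\|_H\le\|\mu-\mu_\delta\|_{KR}\le\delta\,\mu(X)\to 0$. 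With that substitution the proof is complete and entirely elementary, which is arguably a gain over the paper's approach of citing the result.
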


\begin{remark} For the rest of the paper, the normed spaces of measures $\mc{M}(X),\mc{M}(Y)$ and $\mc{M}(X\times Y)$ will be assumed to have the Hanin norm and the normed spaces $\Lip(X),\Lip(Y)$ and $\Lip(X\times Y)$ the $\max$ norm unless stated otherwise (as for example in $(vi)$ of Proposition~\ref{prop:properties-c-eps-phi-trans} where the $\|\cdot\|_{\infty}$-norm is used).\end{remark}

Given $X$ and $Y$ compact metric spaces and $\pi\in \mc{M}(X\times Y)$ let $p_1:X\times Y\to X$  be the map $(x,y)\mapsto x$. We denote by $p_1^*(\pi)\in \mc{M}(X)$ the pushforward measure of $\pi$, i.e. $p_1^*(\pi)(A) := \pi(p_1^{-1}(A))$ for any $A\in \mc{B}(X)$. We do an analogous definition with with $p_2:X\times Y\to Y$ $(x,y)\mapsto y$. If we let now $\mu\in P(X)$ and $\nu\in P(Y)$ we can define $\Pi(\mu,\nu):=\{\pi\in \mc{M}_+(X\times Y):p_1^*(\pi)=\mu,p_2^*(\pi)=\nu\}$. 

\subsection{Convex analysis \citep{Zalinescu2002}}
Given a topological vector space $X$, denote its topological dual by $X^*$, i.e. the set of real-valued continuous linear maps on $X$, which is a topological vector space itself, and the canonical pairing by $\langle \cdot,\cdot \rangle: X \times X^* \to \R$, which is the continuous bilinear map $(x,x^*) \to \langle x, x^* \rangle = x^*(x)$. Given a function $f: X \to \overline{\R}$, the set $\dom f = \left\{ x \in X : f(x) < \infty \right\}$ is the effective domain of $f$. A function $f$ is proper if $\dom f \neq \emptyset$ and $f(x) > -\infty$ for all $x \in X$, otherwise it is improper. For a convex function $f : X \to \overline{\R}$, its convex conjugate is $f^* : X^* \to \overline{\R}$ defined by $f^*(x^*)=\sup_{x \in X}\{\langle x, x^* \rangle - f(x) \}$, and its subdifferential at $x \in X$ is the set $\partial f(x) = \{ x^* \in X^* \ \vert \ \forall \hat{x} \in X : \langle \hat{x} - x , x^* \rangle \leq f(\hat{x}) - f(x) \}$, singleton if and only if $f$ is Gateaux differentiable at $x$.

\begin{remark}It should not be confused the pushforward operators $p_1^*$ and $p_2^*$ with the convex conjugate of a convex function (represented also with the symbol ${}^*$). We believe that this will make no confusion as the only pushforward of a measure will be represented as $p_1^*$ and $p_2^*$. All the rest of ${}^*$ are convex conjugates.\end{remark}

\section{Complete proofs} \label{appendix_proofs}

Let us start with an easy result that shows that the pushforward operation of a measure is continuous between the spaces of measures in the Hanin norm:

\begin{proposition}\label{prop:cont-M-1} Let $X$ and $Y$ be compact metric spaces.
The map $p_1^* : \mathcal{M}(X \times Y) \to \mathcal{M}(X)$ is linear and continuous.
\end{proposition}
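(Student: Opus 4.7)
The plan is to establish linearity directly from the definition and then obtain continuity by duality, using the theorem that identifies $(\mathcal{M}(X),\|\cdot\|_H)^*$ isometrically with $(\Lip(X),\|\cdot\|_{\max})$.

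First, I would observe that linearity of $p_1^*$ is immediate: for any $\alpha,\beta\in\R$, $\pi_1,\pi_2\in\mathcal{M}(X\times Y)$ and $A\in\mathcal{B}(X)$, the definition of pushforward gives $p_1^*(\alpha\pi_1+\beta\pi_2)(A) = (\alpha\pi_1+\beta\pi_2)(p_1^{-1}(A)) = \alpha\, p_1^*(\pi_1)(A)+\beta\, p_1^*(\pi_2)(A)$, so linearity follows setwise.

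For continuity, I would use the duality characterization of the Hanin norm: for any $\eta\in\mathcal{M}(X)$, $\|\eta\|_H = \sup\{\int f\, d\eta : f\in\Lip(X),\ \|f\|_{\max}\le 1\}$. Given $\pi\in\mathcal{M}(X\times Y)$ and $f\in\Lip(X)$ with $\|f\|_{\max}\le 1$, the change-of-variables formula yields $\int f\, d(p_1^*\pi) = \int f\circ p_1\, d\pi$. The key technical step is to check that $f\circ p_1\in\Lip(X\times Y)$ with $\|f\circ p_1\|_{\max}\le\|f\|_{\max}$. This uses the standing assumption on the product metric $d_{X\times Y}$ (recalled in the appendix, e.g.\ $d_{X\times Y}((x,y),(x',y'))=\max\{d_X(x,x'),d_Y(y,y')\}$), which makes $p_1$ $1$-Lipschitz; hence $|f(p_1(x,y))-f(p_1(x',y'))|\le\|f\|_L\, d_X(x,x')\le\|f\|_L\, d_{X\times Y}((x,y),(x',y'))$, so $\|f\circ p_1\|_L\le\|f\|_L$, and clearly $\|f\circ p_1\|_\infty=\|f\|_\infty$.

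Putting these together, for any $f\in\Lip(X)$ with $\|f\|_{\max}\le 1$ one has $\|f\circ p_1\|_{\max}\le 1$, and therefore $\int f\, d(p_1^*\pi) = \int f\circ p_1\, d\pi \le \|\pi\|_H$ by the dual characterization applied on $X\times Y$. Taking the supremum over such $f$ gives $\|p_1^*\pi\|_H\le\|\pi\|_H$, which simultaneously proves continuity of $p_1^*$ and that its operator norm is at most $1$. The only subtlety is the check that composition with $p_1$ is norm-decreasing on $\Lip$, which is where the specific choice of metric on $X\times Y$ enters; since the paper fixes such a metric, this is routine rather than a genuine obstacle.
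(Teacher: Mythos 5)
Your proof is correct, but it takes a genuinely different route from the paper's. You dualize: you express the Hanin norm as $\|\eta\|_H=\sup\{\int f\,d\eta:\|f\|_{\max}\le 1\}$, push the test function back through $p_1$ via change of variables, and check that composition with $p_1$ is norm-decreasing on $\Lip$. The paper instead works directly with the definition of $\|\cdot\|_H$ as an infimum over decompositions $\pi=\nu+(\pi-\nu)$ with $\nu\in\mathcal{M}(X\times Y,0)$: it shows $\|\nu\circ p_1^{-1}\|_{KR}\le\|\nu\|_{KR}$ (by the same observation that $f\circ p_1$ is $1$-Lipschitz, so the supremum defining $\|\cdot\|_{KR}$ is over a smaller set) and $\|\pi\circ p_1^{-1}-\nu\circ p_1^{-1}\|\le\|\pi-\nu\|$ (since a partition of $X$ pulls back to a partition of $X\times Y$), then takes the infimum. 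Both arguments yield operator norm at most $1$ and hinge on the same key computation about $f\circ p_1$. What your route buys is brevity and a single clean inequality; what it costs is reliance on the supremum characterization of $\|\cdot\|_H$, which is not literally the definition used in the paper — it follows from the stated duality theorem $(\Lip(X),\|\cdot\|_{\max})\simeq(\mathcal{M}(X),\|\cdot\|_H)^*$ only via the standard Hahn--Banach fact that the canonical embedding into the bidual is isometric. You should make that step explicit (or cite it), since otherwise the duality theorem as stated only gives you $|\int f\,d\eta|\le\|f\|_{\max}\|\eta\|_H$, i.e.\ the inequality in the direction you do not need. The paper's argument is more self-contained in that it never leaves the defining infimum.
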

\begin{proof} 
Let $p_1:X\times Y\to X$ be the projection to the first coordinate, $(x,y)\mapsto x$. As $p_1^*(\pi)=\pi\circ p_1^{-1}$ and thus it follows directly that this operator is linear. To see that it is continuous it is enough to check that it is bounded. Let $\pi\in \mathcal{M}(X \times Y)$ be such that $\|\pi\|_H\le 1$. Let $\nu\in \mathcal{M}(X \times Y,0)$ be such that $\|\nu\|_{KR}+\|\pi-\nu\|\le 1+\epsilon$ for some $\epsilon>0$. We have to see that $\|p_1^*(\pi)\|_H$ is bounded.

It suffices to see that $\|\nu\circ p_1^{-1}\|_{KR}+\|\pi\circ p_1^{-1}-\nu\circ p_1^{-1}\|$ is bounded by some constant (independent of $\nu$). Clearly we have that $\nu\circ p_1^{-1}\in \mathcal{M}(X,0)$ as $\nu\circ p_1^{-1}(X)=\nu(X\times Y)=0$. By definition

\[
\|\nu\circ p_1^{-1}\|_{KR}=\sup\left\{\int f\circ p_1 \;d\nu\; : \;\|f\|_L\le 1\right\}.
\]
Consider $X\times Y$ with the maximun distance, $d_{X\times Y} = \max(d_X,d_Y)$. It is easy to see that $f\circ p_1\in \Lip(X\times Y)$ as $|(f\circ p_1)(x,y)-(f\circ p_1)(x',y')| = |f(x)-f(x')|\le \|f\|_Ld_X(x,x')\le \|f\|_Ld_{X\times Y}((x,y),(x'y'))$. If $\|f\|_L\le 1$ then $\|f\circ p_1\|_L\le 1$ as well. Therefore $\|\nu\circ p_1^{-1}\|_{KR} \le \|\nu \|_{KR}$ (as essentially we are taking the supremum over a larger set).

For the total variation part, note that given a partition $A_1,\ldots,A_m$ of $X$, this automatically gives us a partition of $X\times Y$ induced by $p_1^{-1}$, namely $A_1\times Y,\ldots,A_m\times Y$. Therefore $\|\pi\circ p_1^{-1}-\nu\circ p_1^{-1}\| \le \|\pi-\nu\|$. Thus $\|p_1^*(\pi)\|_H\le 1+\epsilon$ for all positive $\epsilon$ and therefore $\|p_1^*(\pi)\|_H\le \|\pi\|_H$ and the functional is continuous. \end{proof}

Clearly a similar argument shows that $p_2^*$ is linear and continuous.

\begin{proposition}\label{prop:weak-Legendre}
If a proper, convex and lower semicontinuous function  $\phi : \R \to \overline{\R}$ is of Legendre type, then $\phi_+ = \phi + \iota_{\R_+}$ is strictly convex and differentiable on $\dom \phi_+ = \dom \phi \cap \R_+$, $\phi_+^*$ is strictly convex and differentiable on $\dom \phi_+^*$, and $(\phi_+')^{-1} = {\phi_+^*}'$ on the set $\{ t \in \R : {\phi_+^*}'(t) > 0 \}$.
\end{proposition}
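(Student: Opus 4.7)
The plan is to reduce the claims about $\phi_+$ to direct inheritance from $\phi$, then derive the properties of $\phi_+^*$ from the Young--Fenchel subdifferential inversion $\partial \phi_+^* = (\partial \phi_+)^{-1}$, and finally read off the inversion identity by applying the duality on the open half-line $\{s > 0\}$ where $\phi_+$ coincides with $\phi$ and $\phi_+'$ is a bijection onto its image. For the first step, since $\phi_+ = \phi$ on $\dom \phi_+ = \dom \phi \cap \R_+ \subseteq \dom \phi$, strict convexity of $\phi_+$ on its domain is immediate from that of $\phi$. For differentiability at $s \in \interior \dom \phi_+$, an open neighbourhood of $s$ lies inside $\interior \dom \phi$, where $\phi_+$ and $\phi$ agree and $\phi$ is differentiable by the Legendre hypothesis, so $\phi_+'(s)$ exists and equals $\phi'(s)$.

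For $\phi_+^*$ I would use the Young--Fenchel equality $\phi_+(s) + \phi_+^*(t) = st \Leftrightarrow t \in \partial \phi_+(s) \Leftrightarrow s \in \partial \phi_+^*(t)$, which gives the subdifferential inversion $\partial \phi_+^* = (\partial \phi_+)^{-1}$. Strict convexity of $\phi_+$ on $\dom \phi_+$ yields, by a short direct computation with the two subgradient inequalities, that $\partial \phi_+(s_1) \cap \partial \phi_+(s_2) = \emptyset$ whenever $s_1 \neq s_2$; hence $\partial \phi_+^*(t)$ is at most a singleton. On $\interior \dom \phi_+^*$ the subdifferential is nonempty by proper convexity and lower semicontinuity, so it is exactly a singleton, which is equivalent to Gateaux differentiability of $\phi_+^*$. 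The strict convexity of $\phi_+^*$ in the relevant regime is obtained by the dual observation: differentiability combined with strict convexity of $\phi_+$ on $\interior \dom \phi_+$ makes $\phi_+'$ injective there, and injectivity of $\phi_+'$ translates into strict monotonicity of $(\phi_+^*)'$ along the image of $\phi_+'$.

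Finally, for the inversion identity, fix $t \in \R$ with $(\phi_+^*)'(t) > 0$ and set $s = (\phi_+^*)'(t)$. Then $s > 0$ places $s$ in $\interior \dom \phi_+ \subseteq \interior \dom \phi$, so $\phi_+$ is differentiable at $s$ with $\phi_+'(s) = \phi'(s)$. Since $s \in \partial \phi_+^*(t)$ is equivalent to $t \in \partial \phi_+(s) = \{\phi_+'(s)\}$, one gets $t = \phi_+'(s)$, i.e. $(\phi_+')^{-1}(t) = s = (\phi_+^*)'(t)$. The main obstacle I expect is the delicate boundary behaviour at $s = 0$: if $\inf \dom \phi < 0$, then $\phi_+$ has a kink at $0$ with multi-valued subdifferential, and correspondingly $\phi_+^*$ contains a flat region where $(\phi_+^*)' = 0$; the hypothesis $(\phi_+^*)'(t) > 0$ in the statement is designed exactly to exclude this flat region so that the inverse lands safely in $\interior \dom \phi_+$. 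Conversely, if $\inf \dom \phi \ge 0$, the Legendre hypothesis forces $\phi'(s) \to -\infty$ as $s \to 0^+$, so $\phi_+$ is essentially smooth and no flat region appears.
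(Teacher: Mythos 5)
Your proof is correct, but it follows a genuinely different route from the paper's. You work abstractly through the Young--Fenchel inversion $\partial\phi_+^* = (\partial\phi_+)^{-1}$: strict convexity of $\phi_+$ forces subdifferentials at distinct points to be disjoint, hence $\partial\phi_+^*$ is single-valued, hence $\phi_+^*$ is differentiable on the interior of its domain, and the inversion identity on $\{{\phi_+^*}'>0\}$ falls out of the equivalence $s\in\partial\phi_+^*(t)\Leftrightarrow t\in\partial\phi_+(s)$. The paper instead computes $\phi_+^*(t)=\sup_{s\in\R_+}\{st-\phi(s)\}$ directly as a constrained strictly concave maximization, solves the first-order condition via \citet[Lemma~2.6]{Borweinetal1993} to get the explicit piecewise formula $\phi_+^*(t)=\phi^*(t)$ when ${\phi^*}'(t)\geq 0$ and $\phi_+^*(t)=-\phi(0)$ otherwise, and differentiates. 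The paper's computation buys closed-form expressions for $\phi_+^*$ and ${\phi_+^*}'$ (which are what actually get used in Appendix~\ref{appendix_phis} and the algorithms), while your argument is cleaner structurally and, notably, more honest about the flat region: as your own remark about the kink at $0$ makes explicit, when $\inf\dom\phi<0$ the conjugate $\phi_+^*$ is \emph{constant} on $\{{\phi^*}'<0\}$ (e.g.\ the $\chi^2$ case), so "strictly convex on $\dom\phi_+^*$" in the statement can only hold off that region --- a caveat you flag and the paper's formula also exhibits but does not comment on. Two small points you leave implicit: differentiability claims should be read on the interiors of the respective domains (at $s=0$ the subdifferential of $\phi_+$ is a half-line when $0\in\interior\dom\phi$), and in the final step the case $s=\sup\dom\phi<\infty$ must be excluded; both are handled by the Legendre hypothesis (essential smoothness gives $\partial\phi(\sup\dom\phi)=\emptyset$), so neither affects correctness.
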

\begin{proof}
If $\dom \phi \subset \R_+$, the proposition is immediate. Assume the contrary, so that $\phi(0) \in \R$. By definition, for $t \in \R$
\begin{equation}
\phi_+^*(t) = \sup_{s \in \R} \{ st - \phi_+(s) \} = \sup_{s \in \R_+} \{ st - \phi(s) \},
\end{equation}
which is a strictly concave constrained maximization problem. The first derivative test gives
\begin{equation}
t-\phi'(s)=0,
\end{equation}
giving the optimum
\begin{equation}
s = {\phi^*}'(t)
\end{equation}
\citep[Lemma~2.6]{Borweinetal1993}. If ${\phi^*}'(t) \geq 0$, the constraint is satisfied and the optimum is valid. Otherwise, since the problem is strictly concave, the optimum is going to be $s=0$, giving
\begin{equation}
\phi_+^*(t) = \begin{cases}
{\phi^*}'(t)t - \phi({\phi^*}'(t)) = \phi^*(t) & \text{ if } {\phi^*}'(t) \geq 0,\\
- \phi(0) & \text{ otherwise.}
\end{cases}
\end{equation}
Differentiating gives
\begin{equation}
{\phi_+^*}'(t) = \begin{cases}
{\phi^*}'(t) & \text{ if } {\phi^*}'(t) \geq 0,\\
0 & \text{ otherwise,}
\end{cases}
\end{equation}
proving the proposition.
\end{proof}

In the following, we denote by $\langle \mu, f \rangle$ the integral $\int f d\mu$ of $f \in \Lip(X)$ and $\mu \in \M(X)$, since it is exactly the dual pairing for the duality of $(\M(X),\Vert.\Vert_H)$ and $(\Lip(X),\Vert.\Vert_{\max})$ (and similarly for the spaces $Y$ and $X \times Y$). The mapping $(\mu \to D_\phi(\mu \Vert \nu))$ is denoted $I_{\phi,\nu}$, the theory of which can be found in the literature \citep{Agrawaletal2020, Borweinetal1993, Terjek2021}. We begin with a technical result that will help us prove strong duality.

\begin{proposition}\label{primal_map_conjugate} Let $X$ and $Y$ be compact metric spaces and $c \in \Lip(X \times Y)$. Let also $\phi : \R \to \overline{\R}$ be a proper, convex and lower semicontinuous function of Legendre type with $\phi(1)=0$. Let $\epsilon > 0$, $\mu \in P(X)$ and $\nu \in P(Y)$, and define the map $T_{c,\phi,\epsilon,\mu,\nu} : \mathcal{M}(X \times Y) \to \overline{\R}$ as 
\[
T_{c,\phi,\epsilon,\mu,\nu}(\pi) = \langle \pi,c\rangle + \epsilon I_{\phi_+,\mu \otimes \nu}(\pi) + \iota_{\{(\mu,\nu)\}}(p_1^*(\pi), p_2^*(\pi)).
\]
Then this map is proper, convex, and lower semicontinuous, and its conjugate $T_{c,\phi,\epsilon,\mu,\nu}^* : \Lip(X \times Y) \to \overline{\R}$ is 
\[
T_{c,\phi,\epsilon,\mu,\nu}^*(\varphi) = \inf_{(f,g) \in \Lip(X) \times \Lip(Y)}\left\{ \epsilon I_{\phi_+,\mu \otimes \nu}^*\left(\frac{1}{\epsilon} (\varphi - c - f \oplus g)\right) + \langle \mu, f \rangle + \langle \nu, g \rangle \right\}.
\]
\end{proposition}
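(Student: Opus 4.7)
The plan has three steps: check that $T := T_{c,\phi,\epsilon,\mu,\nu}$ is proper, convex and lower semicontinuous directly from its three defining summands; write $T = F + G\circ A$ for appropriate $F$, $G$, $A$ and compute the pieces $F^*$, $G^*$, $A^*$ separately; then invoke the conjugate-of-a-sum (infimal-convolution) formula to read off the desired expression.

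\emph{Properness, convexity, lsc.} At $\pi^0 = \mu\otimes\nu \in \Pi(\mu,\nu)$ one has $I_{\phi_+,\mu\otimes\nu}(\pi^0) = \phi(1) = 0$ and $\iota_{\{(\mu,\nu)\}}(p_1^*\pi^0, p_2^*\pi^0) = 0$, so $T(\pi^0) = \langle \mu\otimes\nu, c\rangle < \infty$. On the effective domain $\Pi(\mu,\nu)$, $\pi$ is a probability measure, hence $\langle \pi, c\rangle \ge -\|c\|_\infty$ while $I_{\phi_+,\mu\otimes\nu}(\pi) \ge 0$ by Jensen's inequality (using $\phi(1)=0$); thus $T > -\infty$ and $T$ is proper. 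Convexity follows from linearity of $\langle\cdot,c\rangle$, convexity of the $f$-divergence functional (see \cite{Agrawaletal2020, Terjek2021}), and convexity of an indicator of a singleton composed with a linear map. For lower semicontinuity in the Hanin norm: $\langle\cdot, c\rangle$ is continuous via the duality $(\mathcal{M}(X\times Y),\|\cdot\|_H)^* = (\Lip(X\times Y),\|\cdot\|_{\max})$ (since $c \in \Lip(X\times Y)$), $I_{\phi_+,\mu\otimes\nu}$ is lsc (standard), and $p_1^*, p_2^*$ are continuous by Proposition~\ref{prop:cont-M-1}, so composition with the lsc indicator $\iota_{\{(\mu,\nu)\}}$ preserves lsc.

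\emph{Conjugate formula.} Decompose $T = F + G\circ A$ with $F(\pi) = \langle \pi, c\rangle + \epsilon I_{\phi_+,\mu\otimes\nu}(\pi)$, $A(\pi) = (p_1^*\pi, p_2^*\pi)$, and $G(\mu',\nu') = \iota_{\{(\mu,\nu)\}}(\mu',\nu')$. The scaling rule $(\alpha h)^*(\cdot) = \alpha h^*(\cdot/\alpha)$ for $\alpha > 0$ and translation by a linear term give $F^*(\psi) = \epsilon I_{\phi_+,\mu\otimes\nu}^*\!\bigl(\tfrac{1}{\epsilon}(\psi - c)\bigr)$. The adjoint $A^*:\Lip(X)\times\Lip(Y)\to\Lip(X\times Y)$ is $A^*(f,g) = f\oplus g$, because $\int (f(x)+g(y))\,d\pi = \langle p_1^*\pi, f\rangle + \langle p_2^*\pi, g\rangle = \langle \pi, f\oplus g\rangle$, and $G^*(f,g) = \langle \mu,f\rangle + \langle \nu,g\rangle$. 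Applying the infimal-convolution sum rule (see, e.g., \cite[Theorem~2.8.7]{Zalinescu2002}),
\[
(F + G\circ A)^*(\varphi) \;=\; \inf_{(f,g)}\bigl\{F^*(\varphi - A^*(f,g)) + G^*(f,g)\bigr\},
\]
and substituting the three pieces yields exactly the stated formula.

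\emph{Main obstacle.} The easy direction $T^*(\varphi) \le \inf_{(f,g)}\{\cdots\}$ is immediate: for any $\pi \in \Pi(\mu,\nu)$ and any $(f,g)$, Young--Fenchel applied to $\epsilon I_{\phi_+,\mu\otimes\nu}$ together with the marginal identity $\int (f\oplus g)\,d\pi = \langle \mu, f\rangle + \langle \nu, g\rangle$ gives the required bound, and taking the sup over $\pi$ then the inf over $(f,g)$ concludes. The reverse inequality $T^*(\varphi) \ge \inf_{(f,g)}\{\cdots\}$ is the hard part and requires a qualification condition for the sum rule in this non-reflexive infinite-dimensional setting. I would anchor at $\pi^0 = \mu\otimes\nu$, at which both $F$ and $G\circ A$ are finite, and verify either continuity of $F$ at $\pi^0$ in the Hanin norm, or an Attouch--Brezis-type cone condition on $\bigcup_{\lambda>0}\lambda(\dom F - A^{-1}\dom G)$; the delicate input is local boundedness of $I_{\phi_+,\mu\otimes\nu}$ around $\mu\otimes\nu$, which is available because $\phi$ is of Legendre type with $\phi(1) = 0$, so $1 \in \interior\dom\phi$, and the relevant regularity of $f$-divergence functionals on $(\mathcal{M},\|\cdot\|_H)$ is developed in \cite{Terjek2021}.
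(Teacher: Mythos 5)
Your setup (decomposing $T=F+G\circ A$, computing $F^*$, $G^*$, $A^*$, and getting the easy inequality $T^*(\varphi)\le\inf_{(f,g)}\{\cdots\}$ from Young--Fenchel and the marginal identity) is fine and matches the structure of the problem. The gap is in the reverse inequality, exactly where you flag the "main obstacle": neither of the two qualification conditions you propose can be verified here. Continuity of $F$ at $\pi^0=\mu\otimes\nu$ in the Hanin norm fails because $I_{\phi_+,\mu\otimes\nu}$ is not locally bounded above there: for any $t>0$ the measure $\mu\otimes\nu - t\sigma$ with $\sigma\ge 0$ singular to $\mu\otimes\nu$ satisfies $\Vert t\sigma\Vert_H\le t\Vert\sigma\Vert\to 0$, yet $I_{\phi_+,\mu\otimes\nu}(\mu\otimes\nu-t\sigma)=+\infty$ since $\phi_+'(-\infty)=-\infty$ (and when $\phi'(\infty)=\infty$ even adding a small positive singular part gives $+\infty$). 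The fact that $1\in\interior\dom\phi$ controls $\phi$ near $1$ on the real line but says nothing about boundedness of the integral functional on a Hanin-norm neighbourhood. The Attouch--Brezis route is equally blocked: $(\M(X\times Y),\Vert\cdot\Vert_H)$ is only a predual of $\Lip(X\times Y)$ and is not complete, and the cone generated by $A(\dom F)-\{(\mu,\nu)\}$ consists of (differences of) absolutely continuous marginals with bounded densities, which is not a closed subspace of $\M(X)\times\M(Y)$ in the Hanin norm.

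The paper's proof gets around this by using a strictly weaker qualification condition, \cite[Theorem~2.6.1(v)]{Zalinescu2002}: it introduces the perturbation function $\Phi_\varphi(\pi,(\xi,\rho))$ obtained by shifting the marginal constraint, and proves that the marginal value function $h_\varphi(\xi,\rho)=\inf_\pi\Phi_\varphi(\pi,\xi,\rho)$ is finite and lower semicontinuous \emph{at the origin only}. No interiority or local boundedness is needed; the lower semicontinuity is established by a direct compactness argument, using that minimizing sequences must be nonnegative with total variation bounded by $2$, that such sets are compact in the Hanin norm (\cite[Remark~8.5.9]{Cobzasetal2019}), and that $\Phi_\varphi$ is jointly lower semicontinuous. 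If you want to complete your argument you need to replace the continuity/Attouch--Brezis step by this value-function lower semicontinuity (or an equivalent compactness-based closedness argument); as written, the step you describe as "the delicate input" is not merely delicate but false.
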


\begin{proof} First we define $\Phi_\varphi : \mathcal{M}(X \times Y) \times (\mathcal{M}(X) \times \mathcal{M}(Y)) \to \overline{\R}$ as
\[
\Phi_\varphi(\pi, (\xi, \rho)) = \iota_{\{(\mu,\nu)\}}(p_1^*(\pi) - \xi, p_2^*(\pi) - \rho)+\langle \pi,c-\varphi\rangle +\epsilon I_{\phi_+,\mu \otimes\nu}(\pi) .\]
for a function $\varphi\in \Lip(X\times Y)$. Now note that

\begin{align*}
T^*_{c,\phi,\epsilon,\mu,\nu}(\varphi)&=\sup_{\pi\in \mathcal{M}(X\times Y)}\{ \langle\pi, \varphi\rangle-\epsilon I_{\phi_+,\mu \otimes\nu}(\pi)- \iota_{\{(\mu,\nu)\}}(p_1^*(\pi), p_2^*(\pi)) -\langle \pi,c\rangle\}\\
&= \sup_{\pi\in \mathcal{M}(X\times Y)}\left\{ -\Phi_{\varphi}(\pi,0,0) \right\}= -\inf_{\pi\in \mathcal{M}(X\times Y)}\left\{  \Phi_{\varphi}(\pi,0,0) \right\}.
\end{align*}
 
Further, suppose that the following convex optimization problem can be solved:

\begin{equation}\label{eq:f-entr-reg-1}
-\inf_{\pi \in \mathcal{M}(X \times Y)}\{ \Phi_\varphi(\pi, 0, 0) \} = \inf_{(f,g) \in \Lip(X) \times \Lip(Y)}\{ \Phi_\varphi^*(0, (-f, -g)) \}
\end{equation}
Then this would imply that

\[
T^*_{c,\phi,\epsilon,\mu,\nu}( \varphi)= \inf_{(f,g) \in \Lip(X) \times \Lip(Y)}\{ \Phi_\varphi^*(0, (-f, -g)) \}.
\]

Now, by definition of $\Phi_\varphi^*(0, (-f, -g))$ we have that

\begin{align*}
\Phi_\varphi^*(0, (-f, -g))&
= \sup_{\pi,\xi,\rho} \{ \langle \xi,-f\rangle+\langle \rho,-g\rangle+ \langle\pi, \varphi\rangle -\langle \pi,c\rangle \\&-\iota_{\{(\mu,\nu)\}}(p_1^*(\pi)-\xi,p_2^*(\pi)-\rho)-\epsilon I_{\phi_+,\mu\otimes\nu}(\pi) \}.
\end{align*}
Changing the variables $\eta:=p_1^*(\pi)-\xi$ and $\tau:=p_2^*(\pi)-\rho$ the previous equation equals:
\[
\sup_{\pi,\eta,\tau} \{ \langle \varphi-f\oplus g,\pi\rangle +\langle f,\eta\rangle+\langle g,\tau\rangle-\langle \pi,c\rangle -\iota_{\{(\mu,\nu)\}}(\eta,\tau)-\epsilon I_{\phi_+,\mu\otimes\nu}(\pi) \}.\\
\]
It is clear that without loss of generality we can assume that $\eta=\mu$ and $\tau=\nu$ (as otherwise the value inside the supremum is $-\infty$). Hence
\begin{align*}
\Phi_\varphi^*(0, (-f, -g))&
= \sup_{\pi} \{ \langle \varphi-f\oplus g,\pi\rangle-\epsilon I_{\phi_+,\mu\otimes\nu}(\pi) -\langle \pi,c\rangle\}+\langle f,\mu\rangle+\langle g,\nu\rangle\\
&= (\epsilon I_{\phi_+,\mu\otimes \nu})^*( \varphi-f\oplus g-c)+\langle f,\mu\rangle+\langle g,\nu\rangle.
\end{align*}

And this will conclude the proof since \[T^*_{c,\phi,\epsilon,\mu,\nu}( \varphi)= \inf_{(f,g) \in \Lip(X) \times \Lip(Y)}\{ (\epsilon I_{\phi_+,\mu\otimes \nu})^*( \varphi-f\oplus g-c)+\langle f,\mu\rangle+\langle g,\nu\rangle \}\]
and $(\epsilon I_{\phi_+,\mu\otimes \nu})^* = \epsilon I_{\phi_+,\mu\otimes \nu}^*(\frac{1}{\epsilon} \cdot)$ \citep[Theorem~2.3.1(v)]{Zalinescu2002}.

Thus, it only remains to check that \eqref{eq:f-entr-reg-1} holds. To do so, we know that we have strong duality if the marginal function

\[
h_{\varphi}(\xi,\rho):=\inf_{\pi\in \mathcal{M}(X\times Y)}\Phi_{\varphi}(\pi,\xi,\rho)
\]
is lower semicontinuous at the origin and $h_{\varphi}(0,0)\in \mathbb{R}$ \citep[Theorem~2.6.1(v)]{Zalinescu2002}. First note that taking $\pi=\mu\otimes \nu$ it is easy to see that the infumum is not $\infty$. To see that it is not equal to $-\infty$, note that \citep[Paragraph before Remark 4.1.4]{Agrawaletal2020}
\[
I_{\phi_+,\mu \otimes\nu}(\pi)\ge  0 
\]
for any $\pi\in \M(X\times Y)$. If we take now any $\pi$ such that $\Phi_{\varphi}(\pi,0,0)\not=\infty$ it is clear that $p_1^*(\pi)=\mu$ and $\pi$ is a positive measure. Thus, we have the bound  $\Phi_{\varphi}(\pi,0,0) \ge -\|c-\varphi\|_{\infty}$ for all $\pi\in \mc{M}(X\times Y)$. Hence, we have that $h_{\varphi}(0,0)\in \mb{R}$.

To prove lower semicontinuity at the origin we have to prove that given $(\xi_n,\rho_n)\in \mathcal{M}(X)\times \mathcal{M}(Y)$ with $(\xi_n,\rho_n)\to (0,0)$ as $n\to \infty$ then we have that $h_{\varphi}(0,0)\le \liminf_{n\to \infty}h_{\varphi}(\xi_n,\rho_n)$. Note that for $n$ large enough we can assume without loss of generality $\max(\|\xi_n\|_H,\|\rho_n\|_H)\le 1$. Note also that if $\|\xi\|_H \le 1$ then $\Phi_{\varphi}(\pi,\xi,\rho)$ is bounded from below. This is because in order to have a value different from $\infty$ we must have that $\pi\ge 0$ (otherwise $I_{\phi_+,\mu\otimes\nu}(\pi)=\infty$) and also $p_1^*(\pi)=\xi+\mu$. In particular, the total variation of $\pi$ can be bounded as follows
\[
\|\pi\| = \pi(X\times Y)=p_1^*(\pi)(X)=(\xi+\mu)(X) \le \|\xi\|_H+1\le 2,
\]
where in the first equality we have used that $\pi\ge 0$ and for the first inequality we have used \citet[Proposition~8.5.2(ii)]{Cobzasetal2019}. Thus, we have that
\[
\Phi_{\varphi}(\pi,\xi,\rho) \ge \langle c-\varphi,\pi\rangle \ge -\|\varphi-c\|_{\infty}\|\pi\|\ge -2\|\varphi-c\|_{\infty}
\]
whenever $\|\xi\|_H\le 1$.

Hence, if $\max(\|\xi_n\|_H,\|\rho_n\|_H)\le 1$ then $h_{\varphi}(\xi_n,\rho_n)>-\infty$. It is clear that if $\liminf_{n\to\infty} h_{\varphi}(\xi_n,\rho_n)=\infty$ then the lower semicontinuity of this sequence is verified. Hence, passing through a subsequence if necessary we can assume that $h_{\varphi}(\xi_n,\rho_n)$ are all finite and that $\liminf_{n\to\infty} h_{\varphi}(\xi_n,\rho_n)=\lim_{n\to\infty} h_{\varphi}(\xi_n,\rho_n)$. Now, for each $n$ let $\pi_n\in \mathcal{M}(X\times Y)$ be such that $|\Phi_{\varphi}(\pi_n,\xi_n,\rho_n)-h_{\varphi}(\xi_n,\rho_n)|<1/n$. Note that without loss of generality we can assume that $\pi_n\ge 0$ (using the same arguments as we used in the previous paragraph). In particular, we have that $\|\pi_n\|\le 2$ for all $n$ large enough (so that $\|\xi_n\|_H\le 1$). By \citet[Remark~8.5.9]{Cobzasetal2019} we have that the set $\{\pi\in \mathcal{M}(X\times Y):\|\pi\|\le 2\}$ is compact in the Hanin norm and therefore there exists a convergent subsequence $\pi_n\to\pi$ (that abusing the notation we denote just by $n$). Hence
\begin{align*}
    \liminf_{n\to\infty} h_{\varphi}(\xi_n,\rho_n) \ge \liminf_{n\to\infty} \Phi_{\varphi}(\pi_n,\xi_n,\rho_n)-1/n = \liminf_{n\to\infty} \Phi_{\varphi}(\pi_n,\xi_n,\rho_n)\\
    \ge \Phi_{\varphi}(\pi,0,0) \ge \inf_{\pi\in \mathcal{M}(X\times Y)}\Phi_{\varphi}(\pi,0,0) = h_{\varphi}(0,0).
\end{align*}
Where we have used that $\Phi_{\varphi}$ is lower semicontinuous. To prove this, we just have to prove that it is the sum of lower semicontinuous functions. Clearly $\langle \cdot,c-\varphi\rangle$ is continuous, the indicator function is also lower semi-continuous, and $I_{\phi,\mu\otimes \nu}$ is lower semi-continuous in the Hanin norm \citep[Proposition~7]{Terjek2021}. Using that clearly $(\pi_n,\xi_n,\rho_n)\to (\pi,0,0)$ as $n\to\infty$ the result follows. The map $T_{c,\phi,\epsilon,\mu,\nu}$ is easily seen to be proper, convex and lower semicontinuous.\end{proof}

Let us recall the definition of $c$-transform \citep[Definition~5.2]{Villani2008}.

\begin{definition}[$c$-transform] Let $f\in \Lip(X)$ and $c\in \Lip(X\times Y)$ for some compact metric spaces $X$ and $Y$. We define the $c$-transform of $f$ as follows:

\[
f^c(y):=\inf_{x\in X}\{c(x,y)-f(x)\}.
\]
\end{definition}

\begin{proposition}\label{prop:properties-c-transform} Let $f\in \Lip(X)$ and $c\in \Lip(X\times Y)$ for some compact metric spaces $X$ and $Y$. Then the $c$-transform of $f$ has the following properties:

\begin{enumerate}[(i)]
    \item If $g\in \Lip(Y)$ is such that $f\oplus g\le c$ then $g\le f^c$.
    \item $f\oplus f^c \le c$.
    \item $f^c\in \Lip(Y)$ and $\|f^c\|_L\le \|c\|_L$.
    \item $\|f^c\|_{\infty} \le \|f\|_{\infty}+\|c\|_{\infty}$.
\end{enumerate}
\end{proposition}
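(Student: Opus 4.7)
The plan is to verify all four properties directly from the definition $f^c(y) = \inf_{x \in X}\{c(x,y) - f(x)\}$, using only elementary facts about infima together with the Lipschitz assumption on $c$ and the compatibility condition on the product metric $d_{X \times Y}$ stated in the appendix.

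For (i), I would rewrite the hypothesis $f \oplus g \le c$ as $g(y) \le c(x,y) - f(x)$ for every pair $(x,y)$, and then take the infimum over $x$ with $y$ fixed to obtain $g(y) \le f^c(y)$. For (ii), the same definition gives $f(x) + f^c(y) \le f(x) + (c(x,y) - f(x)) = c(x,y)$ for every $(x,y)$. Part (iv) is equally immediate: for every $x,y$ one has $|c(x,y) - f(x)| \le \|c\|_\infty + \|f\|_\infty$, and this uniform bound is inherited by the infimum.

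The only part that requires more than unpacking definitions is (iii). Here the key observation is that, by the choice of metric on $X \times Y$ satisfying $d_{X\times Y}((x,y),(x,y')) = d_Y(y,y')$, the function $y \mapsto c(x,y) - f(x)$ is $\|c\|_L$-Lipschitz for each fixed $x$, with constant independent of $x$. Then I would invoke the standard fact that a pointwise infimum of uniformly $K$-Lipschitz functions is itself $K$-Lipschitz: given $y,y' \in Y$ and $\delta > 0$, pick $x_\delta$ with $c(x_\delta,y') - f(x_\delta) \le f^c(y') + \delta$, so that $f^c(y) \le c(x_\delta,y) - f(x_\delta) \le c(x_\delta,y') - f(x_\delta) + \|c\|_L d_Y(y,y') \le f^c(y') + \|c\|_L d_Y(y,y') + \delta$. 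Letting $\delta \downarrow 0$ and swapping the roles of $y$ and $y'$ gives $|f^c(y) - f^c(y')| \le \|c\|_L d_Y(y,y')$, hence $\|f^c\|_L \le \|c\|_L$, and combined with (iv) this shows $f^c \in \Lip(Y)$.

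There is no real obstacle in this proposition; the only subtlety worth flagging is the use of the metric compatibility $d_{X\times Y}((x,y),(x,y')) = d_Y(y,y')$ in (iii), without which the Lipschitz constant of $y \mapsto c(x,y)$ would not be controlled by $\|c\|_L$.
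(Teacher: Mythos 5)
Your proposal is correct and follows essentially the same route as the paper: parts (i), (ii), (iv) by unpacking the definition of the infimum, and part (iii) by the standard argument that a pointwise infimum of uniformly $\|c\|_L$-Lipschitz functions $y\mapsto c(x,y)-f(x)$ is $\|c\|_L$-Lipschitz, using the same metric compatibility $d_{X\times Y}((x,y),(x,y'))=d_Y(y,y')$ that the paper invokes. Your near-minimizer $x_\delta$ version of the infimum comparison is just a cosmetic variant of the paper's direct manipulation of the two infima.
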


\begin{proof} Most of the properties follow immediatelly from the definitions. For $(iii)$, note that given $y,y'\in Y$ we have

\begin{align*}
    f^c(y)-f^c(y') & = \inf_{x\in X}\{c(x,y)-f(x)\} - \inf_{x\in X}\{c(x,y')-f(x)\}\\
    & \le \inf_{x\in X}\{c(x,y')-f(x)+\|c\|_Ld_Y(y,y')\}-\inf_{x\in X}\{c(x,y)-f(x)\} \\
     & \le \|c\|_Ld_Y(y,y'),
\end{align*}
where we have assumed that $d_{X\times Y}((x,y),(x,y')) = d_Y(y,y')$. Swapping the roles of $y$ and $y'$ we have the other inequality and therefore $|f^c(y)-f^c(y')|\le \|c\|_Ld_Y(y,y')$. \end{proof}

The natural generalization of the $c$-transform to the regularized optimal transport problem is the following.

\begin{definition}[$(c,\epsilon,\phi)$-transform] Let $c \in \Lip(X \times Y)$, $\phi : \R \to \overline{\R}$ a proper, convex and lower semicontinuous function of Legendre type with $\phi(1)=0$, $\epsilon > 0$, $\mu \in P(X)$ and $\nu \in P(Y)$. We define the $(c,\epsilon,\phi)$-transform of $f$ as follows:

\[
f^{(c,\epsilon,\phi)}(y):=\argmax_{\gamma\in \mb{R}} \left\{ \frac{1}{\epsilon}\gamma-\int \phi_+^*\left(\frac{1}{\epsilon}(f(x)+\gamma-c(x,y))\right)\;d\mu(x) \right\}.
\]
\end{definition}

Note that in this definition we can assume that $\gamma\le f^c(y)+\epsilon\phi'(\infty)$ as otherwise it is clear that the function inside the $\argmax$ is going to be $-\infty$ \citep[Lemma~2.1]{Borweinetal1993}.

Let us now prove some properties of the $(c,\epsilon,\phi)$-transform:

\begin{proposition}\label{prop:properties-c-eps-phi-trans} Let $(X,d_X)$ and $(Y,d_Y)$ be compact metric spaces. Let also $\mu \in P(X)$ be of full support, i.e. $\support(\mu)=X$, $c \in \Lip(X \times Y)$ a cost function, $0 < \epsilon \in \R$ a regularization coefficient and $\phi : \R \to \overline{\R}$ a proper, convex and lower semicontinuous function  of Legendre type. Then one has that for any $f \in\Lip(X)$:
\begin{enumerate}[(i)]
    \item $f^{(c,\epsilon,\phi)}(y)$ is well-defined for all $y \in Y$ implicitly by $\int_X {\phi_+^*}' \circ \frac{1}{\epsilon}(f + f^{(c,\epsilon,\phi)}(y) - c(\cdot,y)) d\mu = 1$ if there exists such number $f^{(c,\epsilon,\phi)}(y)$ or explicitly as $f^{(c,\epsilon,\phi)}(y)=\min_{x \in X}\{\epsilon \phi'(\infty) + c(x,y)-f(x)\}=f^c(y)+\epsilon\phi'(\infty)$ otherwise.
    \item $f(x)+f^{(c,\epsilon,\phi)}(y) \le c(x,y)+\epsilon \phi'(\infty)$ for all $x \in X$ and $y \in Y$.
    \item $\|f^{(c,\epsilon,\phi)}\|_L\le \|c\|_L$.
    \item $\|f^{(c,\epsilon,\phi)}\|_{\infty} \le \|f\|_{\infty}+\|c\|_{\infty}$ if $\phi'(\infty)=\infty$ and $\|f^{(c,\epsilon,\phi)}\|_{\infty} \le \|f\|_{\infty}+\|c\|_{\infty}+\epsilon\phi'(\infty)$ otherwise.
    \item For any $a\in \mb{R}$ we have $(f+a)^{(c,\epsilon,\phi)} = f^{(c,\epsilon,\phi)} -a$.
    \item The map from $\Lip(X)\to \Lip(Y)$ that sends a function to its $(c,\epsilon,\phi)$-transform is 1-Lipschitz with respect to the $\|\cdot\|_{\infty}$-norm.
\end{enumerate}
\end{proposition}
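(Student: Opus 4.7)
The whole proposition hinges on analysing the concave objective $\Psi(y,\gamma)=\gamma/\epsilon-\int\phi_+^*\circ\tfrac{1}{\epsilon}(f+\gamma-c(\cdot,y))\,d\mu$ and its $\gamma$-derivative $h(y,\gamma)=\int{\phi_+^*}'\circ\tfrac{1}{\epsilon}(f+\gamma-c(\cdot,y))\,d\mu$. Since $\dom\phi_+^*\subseteq(-\infty,\phi'(\infty)]$ and $f,c$ are continuous with $\mu$ of full support, $\Psi(y,\cdot)$ is finite, concave and upper semicontinuous on $(-\infty,f^c(y)+\epsilon\phi'(\infty)]$ and takes the value $-\infty$ beyond. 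Differentiation under the integral, justified by the Legendre-type regularity of $\phi_+^*$, gives $\partial_\gamma\Psi(y,\gamma)=\tfrac{1}{\epsilon}(1-h(y,\gamma))$ with $h(y,\cdot)$ non-decreasing. For (i), the plan is to split on two cases: either the equation $h(y,\gamma^\ast)=1$ admits a solution, necessarily the unique critical point of the concave $\Psi$ and hence the maximiser (implicit formula); or $h(y,\cdot)<1$ throughout the admissible range, so $\Psi(y,\cdot)$ is strictly increasing and attains its maximum at the right endpoint $f^c(y)+\epsilon\phi'(\infty)$, which forces $\phi'(\infty)<\infty$ and gives the explicit formula.

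Parts (ii) and (v) fall out immediately. For (ii), admissibility $f^{(c,\epsilon,\phi)}(y)\le f^c(y)+\epsilon\phi'(\infty)$ together with the definition of $f^c$ yields $f(x)+f^{(c,\epsilon,\phi)}(y)\le c(x,y)+\epsilon\phi'(\infty)$ for all $x$. For (v), the reparameterisation $\tilde\gamma=\gamma+a$ inside the $\argmax$ leaves the integral term invariant and shifts only the linear term by the constant $-a/\epsilon$, so the optimum in $\tilde\gamma$ is unchanged and the optimum in $\gamma$ therefore shifts by $-a$.

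Parts (iii) and (vi) rest on a single \emph{shift inequality}. Lipschitzness of $c$ in $y$ combined with monotonicity of ${\phi_+^*}'$ gives $h(y,\gamma)\ge h(y',\gamma-\|c\|_L d_Y(y,y'))$, and pointwise domination $|f-g|\le\|f-g\|_\infty$ analogously gives $h_f(y,\gamma)\ge h_g(y,\gamma-\|f-g\|_\infty)$. In the implicit--implicit case of (iii), evaluating at $\gamma=f^{(c,\epsilon,\phi)}(y)$ yields $h(y',f^{(c,\epsilon,\phi)}(y)-\|c\|_L d_Y(y,y'))\le 1=h(y',f^{(c,\epsilon,\phi)}(y'))$, and monotonicity of $h(y',\cdot)$ forces $f^{(c,\epsilon,\phi)}(y)-f^{(c,\epsilon,\phi)}(y')\le\|c\|_L d_Y(y,y')$; swapping $y,y'$ closes it. The explicit--explicit case reduces to Proposition~\ref{prop:properties-c-transform}(iii), and the mixed case interpolates via the sandwich $f^{(c,\epsilon,\phi)}(y)\le f^c(y)+\epsilon\phi'(\infty)$ for one direction while retaining the $h$-monotonicity argument for the other. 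Part (vi) follows the identical scheme with the $f$-shift inequality in place of the $c$-shift inequality.

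For (iv), from $h(y,f^{(c,\epsilon,\phi)}(y))=1$ and $\mu$ being a probability measure, one extracts $x_1,x_2\in X$ at which the integrand is $\le 1$ and $\ge 1$ respectively. Since $\phi\ge 0$ attains its minimum at $\phi(1)=0$ (the standing non-negativity hypothesis for $f$-divergences), one has $\phi'(1)=0$ and hence ${\phi_+^*}'(0)=1$ by Proposition~\ref{prop:weak-Legendre}; monotonicity of ${\phi_+^*}'$ converts the pointwise inequalities into $f(x_1)+f^{(c,\epsilon,\phi)}(y)-c(x_1,y)\le 0\le f(x_2)+f^{(c,\epsilon,\phi)}(y)-c(x_2,y)$, yielding $|f^{(c,\epsilon,\phi)}(y)|\le\|f\|_\infty+\|c\|_\infty$ in the implicit case; in the explicit case $|f^c(y)+\epsilon\phi'(\infty)|\le\|f\|_\infty+\|c\|_\infty+\epsilon\phi'(\infty)$ is immediate from the analogous bound for $f^c$. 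The main obstacle throughout is the bifurcation between implicit and explicit formulas; the shift-monotonicity of $h$ is the unifying tool that handles both regimes and, most delicately, the crossover cases where a $y$ in one regime lies near a $y'$ in the other.
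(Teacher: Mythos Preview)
Your plan is correct and, for parts (ii)--(vi), tracks the paper's proof closely: the shift inequality via monotonicity of ${\phi_+^*}'$, the case split implicit/explicit/mixed, and the use of ${\phi_+^*}'(0)=1$ in (iv) are exactly what the paper does.

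The genuine difference is in (i). The paper does \emph{not} analyse $\Psi(y,\cdot)$ directly; instead it sets up, for each fixed $y$, the measure-valued primal problem $\inf_{\xi\in\mathcal M(X,1)}\{I_{\phi_+,\mu}(\xi)-\int\tfrac1\epsilon(f-c(\cdot,y))\,d\xi\}$ and its scalar dual (which is your $\sup_\gamma\Psi$), verifies the constraint qualifications of \cite[Theorem~4.1]{Borweinetal1993}, and reads off existence of an optimal $\widehat\gamma$ together with the structure of the optimal $\widehat\xi$. Uniqueness of $\widehat\gamma$ is then deduced from uniqueness of the absolutely continuous part $d\widehat\xi_c/d\mu={\phi_+^*}'(\tfrac1\epsilon(f+\widehat\gamma-c))$ and invertibility of ${\phi_+^*}'$ on $\{{\phi_+^*}'>0\}$ (Proposition~\ref{prop:weak-Legendre}). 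Your route is more elementary and self-contained; the paper's route buys the primal measure $\widehat\xi$ as a byproduct, which immediately gives the global inequality $h(y,f^{(c,\epsilon,\phi)}(y))\le 1$ (since $\widehat\xi$ is a probability) used later in the mixed case of (vi), and feeds naturally into the $c$-cyclical monotonicity of the singular support.

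One point in your plan for (i) needs tightening: you assert $h(y,\cdot)$ is merely ``non-decreasing'' and then claim the critical point is unique. Concave functions can have intervals of maximisers, so you must use that ${\phi_+^*}'$ is \emph{strictly} increasing on $\{{\phi_+^*}'>0\}$ (the Legendre property via Proposition~\ref{prop:weak-Legendre}); hence $h(y,\cdot)$ is strictly increasing at any $\gamma$ with $h(y,\gamma)>0$, which forces the solution of $h=1$ to be unique. Similarly, your claim that $\Psi$ is finite at the right endpoint $f^c(y)+\epsilon\phi'(\infty)$ in the explicit case is correct but not immediate (since $\phi_+^*(\phi'(\infty))$ may be $+\infty$); it follows from monotone convergence, as $\phi_+^*$ is non-decreasing and bounded below, so $\Psi$ is left-continuous at the endpoint and its value there equals the finite increasing limit.
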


Clearly analogous properties hold if we consider the $(c,\epsilon,\phi)$-transform defined as
\[
g^{(c,\epsilon,\phi)}(x):=\argmax_{\gamma\in \mb{R}} \left\{ \frac{1}{\epsilon}\gamma-\int \phi_+^*\left(\frac{1}{\epsilon}(\gamma+g(y)-c(x,y))\right)\;d\nu(y) \right\}.
\]
of a function $g\in \Lip(Y)$.

\begin{proof} Let us start proving $(i)$. Fix any $y\in Y$ and consider the following Primal Problem

\begin{equation}
    \inf_{\xi\in \mc{M}(X,1)}\left\{ I_{\phi_+,\mu}(\xi)-\int \frac{1}{\epsilon}(f-c(\cdot,y))\; d\xi \right\}
\end{equation}
and the corresponding Dual Problem

\begin{equation}
\sup_{\gamma\in \mb{R}} \left\{ \frac{1}{\epsilon}\gamma-\int \phi_+^*\left(\frac{1}{\epsilon}(f+\gamma-c(\cdot,y))\right)\;d\mu \right\}.
\end{equation}

First, let us verify that the Primal Constraint Qualifications (Primal CQ) and the Dual Constraint Qualifications (Dual CQ) \citep[p. 254 and p. 255]{Borweinetal1993} are satisfied. To verify the Primal CQ just note that taking $\frac{d\xi}{d\mu}=1$ this condition holds (i.e. $\xi=\mu$). For the Dual CQ, note that if $\gamma$ is such that $\gamma < f^c(y)+\epsilon\phi'(\infty)$ then this condition holds as well (as $\phi_+'(-\infty)=-\infty$).

Thus, we get that both the Primal and Dual Problems have (in principle non-necessarily unique) optimal solutions $\widehat{\xi}$ and $\widehat{\gamma}$ respectively \citep[Theorem~4.1 (i), (ii) and (iii)]{Borweinetal1993}. Furthermore, if we decompose $\widehat{\xi} = \frac{d\widehat{\xi}_c}{d\mu}\mu+(\widehat{\xi}_s)_+-(\widehat{\xi}_s)_-$ where $\widehat{\xi}_c$ is the absolutely continuous part with respect to $\mu$ and $(\widehat{\xi}_s)_+$ and $(\widehat{\xi}_s)_-$ is the Jordan decomposition of the singular part we have that $\frac{d\widehat{\xi}_c}{d\mu}$ is uniquely defined $\mu$-a.e. and $\frac{d\widehat{\xi}_c}{d\mu} = {\phi_+^*}'(\frac{1}{\epsilon}(f+\widehat{\gamma}-c(\cdot,y)))$.

Now suppose that we have two optimal $\widehat{\gamma}_1$ and $\widehat{\gamma}_2$ and that the absolutely continuous part is nonzero. By uniqueness of the absolutely continuous part we have that ${\phi_+^*}'(\frac{1}{\epsilon}(f+\widehat{\gamma}_1-c(\cdot,y))) = {\phi_+^*}'(\frac{1}{\epsilon}(f+\widehat{\gamma}_2-c(\cdot,y)))$ $\mu$-a.e. First note that $\int_X  {\phi_+^*}'(\frac{1}{\epsilon}(f+\widehat{\gamma}_1-c(\cdot,y)))d\mu >0$ as otherwise the absolutely continuous part would be 0. Thus, there exists an open set $U_y\subset X$ of positive measure such that ${\phi_+^*}'(\frac{1}{\epsilon}(f(x)+\widehat{\gamma}_1-c(x,y)))>0$ for all $x\in U_y$. Without loss of generality we can assume that $\widehat{\gamma}_2\ge \widehat{\gamma}_1$ (otherwise swap the roles of $\widehat{\gamma}_2$ and $ \widehat{\gamma}_1$) so in particular this inequality holds as well for every $x\in U_y$ replacing $\widehat{\gamma}_1$ with $\widehat{\gamma}_2$. By Proposition~\ref{prop:weak-Legendre}, ${\phi_+^*}'$ is invertible in $U_y$ and thus we have that $\frac{1}{\epsilon}(f+\widehat{\gamma}_1-c(\cdot,y)) = \frac{1}{\epsilon}(f+\widehat{\gamma}_2-c(\cdot,y))$ for $ U_y$-a.e. (if we want to be very precise, this would be with the restriction of $\mu$ to $U_y$) and this clearly shows that $\widehat{\gamma}_1 = \widehat{\gamma}_2$. In particular, this unique value is precisely $f^{(c,\epsilon,\phi)}(y)$. For simplicity and smoothness of the notation we will denote $f^{(c,\epsilon,\phi)}(y)=\gamma_y$.

We have that $\support((\widehat{\xi}_s)_-)\subset \{\frac{1}{\epsilon}(f+\gamma_y-c(\cdot,y))=\phi_+'(-\infty)=-\infty\}=\emptyset$ \citep[Corollary~3.6]{Borweinetal1993} and, in particular $(\widehat{\xi}_s)_-=0$, and that $\support((\widehat{\xi}_s)_+)\subset \{\frac{1}{\epsilon}(f+\gamma_y-c(\cdot,y))=\phi_+'(\infty)\}$. As $\phi$ is of Legendre type ${\phi_+^*}'$ is always nonnegative and increasing by Proposition~\ref{prop:weak-Legendre}. In particular $\widehat{\xi}$ is a probability measure. If $\phi'(\infty)=\infty$ or $\int_X  {\phi_+^*}'(\frac{1}{\epsilon}(f+\widehat{\gamma}_y-c(\cdot,y))) =1$ then we have no singular part but if $\int \frac{d\widehat{\xi}_c}{d\mu}\;d\mu<1$ then there must exists some $x\in X$ such that $f(x)+\gamma_y-c(x,y)=\epsilon\phi'(\infty)$. If we assume that $(ii)$ of this proposition holds and $f(x)+f^{(c,\epsilon,\phi)}(y)\le c(x,y)+\epsilon \phi'(\infty)$ for all $x\in X$ and $y\in Y$, we have that in this case $f^{(c,\epsilon,\phi)}(y)=\gamma_y = f^c(y)+\epsilon\phi'(\infty)$, in particular uniqueness holds even if the absolutely continuous part is 0.

Let us prove $(ii)$ now. First assume that we have not proved the uniqueness part of $(i)$ yet. Suppose by contradiction that $f(x_0)+\gamma_{y_0}> c(x_0,y_0)+\epsilon \phi'(\infty)$ for some $x_0\in X$ and $y_0\in Y$ where $\gamma_{y_0}=\hat{\gamma}_{y_0}$ is an optimal solution of the Dual Problem. Let us now define $U_{y_0}:=\{x\in X:f(x)+\gamma_{y_0}> c(x,y_0)+\epsilon \phi'(\infty)\}$ which by hypothesis is a non-empty open set. Now we use the assumption that $\support(\mu)=X$ to see that in this case $\frac{1}{\epsilon}\gamma_{y_0}-\int_X \phi_+^*(\frac{1}{\epsilon}(f+\gamma_{y_0}-c))\;d\mu$ equals

\[
\frac{1}{\epsilon}\gamma_{y_0}-\int_{X\setminus U_{y_0}} \phi_+^*\left(\frac{1}{\epsilon}(f+\gamma_{y_0}-c)\right)\;d\mu-\int_{ U_{y_0}} \phi_+^*\left(\frac{1}{\epsilon}(f+\gamma_{y_0}-c)\right)\;d\mu.
\]
But $\mu(U_{y_0})>0$ and $\phi_+^*$ equals $\infty$ in that set. As the other part of the integral is always bounded from below, we get that $\frac{1}{\epsilon}\gamma_{y_0}-\int_X \phi_+^*(\frac{1}{\epsilon}(f+\gamma_{y_0}-c))\;d\mu = -\infty$ but this is impossible as we know that the Dual Problem has a solution strictly larger than $-\infty$. Hence, the uniqueness part of $(i)$ holds and therefore as $\gamma_{y}$ is by definition $f^{(c,\epsilon,\phi)}(y)$ we conclude $(ii)$.

We continue now by proving $(iii)$. Let us define $\beta^y(\gamma):= \int_X {\phi_+^*}'(\frac{1}{\epsilon}(f+\gamma-c(\cdot,y)))\;d\mu $ for any $\gamma\in \mb{R}$. As we saw before, either $\beta^y(\gamma_y)=1$ or $\beta^y(\gamma_y)<1$ and $\gamma_y = f^c(y)+\epsilon\phi'(\infty)$. To prove that $f^{(c,\epsilon,\phi)}$ defined pointwise by $\gamma_y$ is Lipschitz, given $y,y'\in Y$ first suppose that $\beta^{y'}(\gamma_{y'}) \ge \beta^y(\gamma_y)$. Then, as ${\phi_+^*}'$ is an increasing function so is $\beta^y(\gamma)$ as a function of $\gamma$. Thus 

\begin{align*}
\beta^{y'}(\gamma_{y'}) &\ge \beta^y(\gamma_y)  \ge \int_X {\phi_+^*}'\left(\frac{1}{\epsilon}(f(x)+\gamma-c(x,y')-\|c\|_Ld_Y(y,y'))\right)\;d\mu \\&= \beta^{y'}(\gamma_y-\|c\|_Ld_Y(y,y')).
\end{align*}
Hence, $\gamma_y-\gamma_{y'}\le \|c\|_Ld_Y(y,y')$.

If $\beta^{y'}(\gamma_{y'}) = \beta^y(\gamma_y)=1$ then we are done, as we can repeat the above argument switching the roles of $y$ and $y'$. Similarly, if $\beta^{y'}(\gamma_{y'})<1$ and $\beta^y(\gamma_y)<1$, using the fact that in this case the transform is just a translate of the regular $c$-transform we get the result. The only case left is what happens if (say) $\beta^{y'}(\gamma_{y'})=1$ and $\beta^y(\gamma_y)<1$. By the previous argument we already know that $\gamma_y-\gamma_{y'}\le \|c\|_Ld_Y(y,y')$. For the other inequality, note that as $f^c(y')-f^c(y)\le \|c\|_Ld_Y(y,y')$ but we also know that $\gamma_y = f^c(y)+\epsilon\phi'(\infty)$ and $\gamma_{y'}\le f^c(y')+\epsilon\phi'(\infty)$. Plugging this into the previous inequality the result follows.

Let us now prove $(iv)$. Again we have to divide into two cases. Given $y\in Y$, if $\beta^y(\gamma_y)=1$ using that $\phi(1)=0$ we know that ${\phi_+^*}'(0)=1$ and as ${\phi_+^*}'$ is increasing and nonnegative we have that $\sup_{x\in X}\{\frac{1}{\epsilon}(f+\gamma_y-c)\}\ge 0$ (as otherwise $\beta^y(\gamma_y)$ would be strictly smaller than 1). From this it is easy to see that $\gamma_y\ge -\|f\|_{\infty}-\|c\|_{\infty}$. An analogous argument shows that $\gamma_y\le \|f\|_{\infty}+\|c\|_{\infty}$. If  $\gamma_y=f^c(y)+\epsilon\phi'(\infty)$ then we use the bound $\|f^c\|_{\infty}\le \|f\|_{\infty}+\|c\|_{\infty}$ and the result follows.

Part $(v)$ follows directly from the definitions.

To prove the last part, let $f_1,f_2\in \Lip(X)$. We want to prove that if $\|f_1-f_2\|_{\infty}\le L$ then $\|f_1^{(c,\epsilon,\phi)}-f_2^{(c,\epsilon,\phi)}\|\le L$. We have to consider 3 different cases. Fix any $y\in Y$. First assume that both $f_i^{(c,\epsilon,\phi)}(y)$ for $i=1,2$ are calculated by the formula $f_i^{(c,\epsilon,\phi)}(y)=\min_{x \in X}\{\epsilon \phi'(\infty) + c(x,y)-f_i(x)\}$. If we use that $-f_1(x)\ge -f_2(x)-L$ for all $x\in X$ we have that $f_1^{(c,\epsilon,\phi)}(y) \ge \min_{x \in X}\{\epsilon \phi'(\infty) + c(x,y)-f_2(x)-L\} = f_2^{(c,\epsilon,\phi)}(y)-L$. Using the inequality $-f_1(x)\le -f_2(x)+L$ we obtain the converse inequality and we are done in this case.

Next, assume that for $i=1,2$, the value of $f_i^{(c,\epsilon,\phi)}(y)$ is given implicitly as the unique value such that $\int_X {\phi_+^*}' \circ \frac{1}{\epsilon}(f_i + f_i^{(c,\epsilon,\phi)}(y) - c(\cdot,y)) d\mu = 1$. Then we would have that for example $1 =\int_X {\phi_+^*}' \circ \frac{1}{\epsilon}(f_1 + f_1^{(c,\epsilon,\phi)}(y) - c(\cdot,y)) d\mu \le \int_X {\phi_+^*}' \circ \frac{1}{\epsilon}(f_2+L + f_1^{(c,\epsilon,\phi)}(y) - c(\cdot,y)) d\mu$. As the function $\beta^y$ as we defined it before is increasing, we must have\footnote{Note that in principle these integrals are only well-defined if the argument of ${\phi_+^*}'$ is less or equal than $\phi'(\infty)$. However, we can assume that the value of ${\phi_+^*}'$ is $\infty$ for values larger than $\phi'(\infty)$ as this will be consistent with the definition of the $(c,\epsilon,\phi)$-transform given in $(i)$.} that by definition $f_2^{(c,\epsilon,\phi)}(y) \le f_1^{(c,\epsilon,\phi)}(y)+L$. By an analogous argument but using that $f_1(x)\ge f_2(x)-L$ for all $x\in X$ we have the opposite inequality.

Finally, in the mixed case when (say) $f_1^{(c,\epsilon,\phi)}(y)$ is given explicitly and $f_2^{(c,\epsilon,\phi)}(y)$ is implicit, we have to combine the previous arguments to conclude our result. On the one hand, $f_1^{(c,\epsilon,\phi)}(y) \ge \min_{x \in X}\{\epsilon \phi'(\infty) + c(x,y)-f_2(x)-L\}\ge f_2^{(c,\epsilon,\phi)}(y)-L$ (as we always have the inequality $f_2^{(c,\epsilon,\phi)}(y) \le f_2^c(y)+\epsilon\phi'(\infty)$. For the other inequality note that $1\ge  \int_X {\phi_+^*}' \circ \frac{1}{\epsilon}(f_1+ f_1^{(c,\epsilon,\phi)}(y) - c(\cdot,y)) d\mu$ always (because $\widehat{\xi}$ is always a probability measure). Then we use the inequality $f_1(x)\ge f_2(x)-L$ which give us at the end that $1\ge \int_X {\phi_+^*}' \circ \frac{1}{\epsilon}(f_2-L+ f_1^{(c,\epsilon,\phi)}(y) - c(\cdot,y)) d\mu$. Similarly as before, this implies that $f_2^{(c,\epsilon,\phi)}(y)\ge f_1^{(c,\epsilon,\phi)}(y)-L$.
\end{proof}

\begin{remark} Note that in some cases the $(c,\epsilon,\phi)$-transform collapses to ``almost'' the $c$-transform. \end{remark}

\begin{example}\label{example} Consider the following example. Let $X=Y=[0,1]$ with the measure $d\mu=2x\;dx$ (where $dx$ is the usual Lebesgue measure). Let also $\epsilon=1$, the cost function $c(x,y)=3x-1$ and $f(x)=0$. Let also $D_\phi$ be the reverse Kullback-Leibler divergence (see Section~\ref{div:reverse-KL}). Then $f^{(c,\epsilon,\phi)}(y)=f^c(y)+\epsilon\phi'(\infty)=0$ for all $y\in Y$. To prove this, note that we just have to compute $\int \phi_+^*(\frac{1}{\epsilon}(f+\gamma-c)d\mu = \int_0^1 \frac{2x}{3x-\gamma}dx = \frac{2}{3}+\frac{2}{9}\gamma(\log(3-\gamma)-\log(-\gamma))$. From here it is easy to check that there is no $\gamma \le 0$ such that the previous integral equals 1. Thus, the $(c,\epsilon,\phi)$-transform of $f$ collapses to $f^c(y)+\epsilon\phi'(\infty)$ for all $y\in Y$. \end{example}

\begin{theorem}\label{thm:strong-dual-primal}
Let $\mu \in P(X)$ and $\nu \in P(Y)$ be probability measures of full support on compact metric spaces $(X,d_X)$ and $(Y,d_Y)$. Let $c \in \Lip(X \times Y)$, $0 < \epsilon \in \R$ be a regularization coefficient  and $\phi : \R \to \overline{\R}$ a proper, convex and lower semicontinuous function  of Legendre type. Then one has
\begin{align*}
&\min_{\pi \in \Pi(\mu,\nu)}\{ \langle \pi, c \rangle + \epsilon D_{\phi}(\pi \Vert \mu \otimes \nu) \} \\
&= \max_{\substack{f\in\Lip(X), g\in \Lip(Y) \\ f \oplus g \leq c + \epsilon \phi'(\infty)}}\{ \langle \mu \otimes \nu, f \oplus g \rangle - \epsilon \langle \mu \otimes \nu, \phi_+^* \circ \frac{1}{\epsilon}(f \oplus g - c)\rangle \} \\
&= \max_{f \in \Lip(X)}\{ \langle \mu \otimes \nu, f \oplus f^{(c,\epsilon,\phi)} \rangle - \epsilon \langle \mu \otimes \nu, \phi_+^* \circ \frac{1}{\epsilon}(f \oplus f^{(c,\epsilon,\phi)} - c)\rangle \} \\
&= \max_{g \in \Lip(Y)}\{ \langle \mu \otimes \nu, g^{(c,\epsilon,\phi)} \oplus g \rangle - \epsilon \langle \mu \otimes \nu, \phi_+^* \circ \frac{1}{\epsilon}(g^{(c,\epsilon,\phi)} \oplus g - c)\rangle \},
\end{align*}
and $\pi_* \in \Pi(\mu,\nu)$ is optimal in the primal problem if and only if there exists $(f_*,g_*) \in \Lip(X) \times \Lip(Y)$ such that
\begin{equation}
\frac{1}{\epsilon}(f_* \oplus g_* - c) \leq \phi'(\infty),
\end{equation}
\begin{equation}
\frac{d\pi_c}{d\mu \otimes \nu} = {\phi_+^*}' \circ \frac{1}{\epsilon}(f_* \oplus g_* - c)
\end{equation}
and
\begin{equation}
\support(\pi_s) \subset \{ (x,y) \in X \times Y : \frac{1}{\epsilon}(f_*(x) + g_*(y) - c(x,y)) = \phi'(\infty) \}
\end{equation}
hold. In this case, $(f_*,g_*)$ are a pair of optimal potentials in the dual problem.
\end{theorem}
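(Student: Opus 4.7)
The plan is to rewrite the primal as $\inf_\pi T_{c,\phi,\epsilon,\mu,\nu}(\pi)=-T_{c,\phi,\epsilon,\mu,\nu}^*(0)$, read off the raw dual from Proposition~\ref{primal_map_conjugate}, upgrade to the three $(c,\epsilon,\phi)$-transform formulations via Proposition~\ref{prop:properties-c-eps-phi-trans}, and finally extract the primal--dual optimality conditions from equality in Young--Fenchel together with the subdifferential characterization of $f$-divergences from \citet[Theorem~2.10]{Borweinetal1993}. For strong duality, evaluating the formula for $T_{c,\phi,\epsilon,\mu,\nu}^*(\varphi)$ of Proposition~\ref{primal_map_conjugate} at $\varphi=0$, using $(\epsilon I_{\phi_+,\mu\otimes\nu})^*=\epsilon I_{\phi_+,\mu\otimes\nu}^*(\tfrac{1}{\epsilon}\cdot)$ and the integral representation of $I_{\phi_+,\mu\otimes\nu}^*$ from \citet{Agrawaletal2020}, and then substituting $(f,g)\mapsto(-f,-g)$, I obtain the raw dual in exactly the stated form. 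The constraint $f\oplus g\le c+\epsilon\phi'(\infty)$ is automatically encoded because $\phi_+^*\equiv+\infty$ strictly above $\phi'(\infty)$ (a consequence of the recession function of $\phi_+$), so it can be added or dropped freely.

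Attainment in the primal is standard: any admissible $\pi\in\Pi(\mu,\nu)$ satisfies $\|\pi\|=1$, a minimizing sequence is therefore relatively compact in the Hanin norm by \cite[Remark~8.5.9]{Cobzasetal2019}, and $T_{c,\phi,\epsilon,\mu,\nu}$ is lower semicontinuous (proved analogously to the lower semicontinuity argument inside the proof of Proposition~\ref{primal_map_conjugate}). For the reduction to the $(c,\epsilon,\phi)$-transform formulations, the inequality $\le$ is immediate from Proposition~\ref{prop:properties-c-eps-phi-trans}$(ii)$, while the converse follows from the pointwise maximization principle already sketched in the excerpt: replacing $g$ by $f^{(c,\epsilon,\phi)}$ never decreases the dual functional. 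To attain the max over $f\in\Lip(X)$, I would take a maximizing sequence $(f_n)$ and use item $(v)$ to normalize each $f_n$ by an additive constant so that $f_n^{(c,\epsilon,\phi)}(y_0)=0$ at a fixed $y_0\in Y$. Items $(iii)$ and $(iv)$ then deliver uniform Lipschitz and sup-norm bounds on $f_n^{(c,\epsilon,\phi)}$, and applying the transform once more (together with $\support(\mu)=X$) controls $f_n$ itself uniformly. Arzelà--Ascoli yields a uniformly convergent subsequence $f_n\to f_*$; continuity of the transform in $\|\cdot\|_\infty$ (item $(vi)$), combined with dominated convergence applied to the integral containing $\phi_+^*$, upgrades this to convergence of the dual values, so $f_*$ is optimal. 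The $g$-version is symmetric.

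For the optimality conditions, let $\pi_*\in\Pi(\mu,\nu)$ and $(f_*,g_*)\in\Lip(X)\times\Lip(Y)$ be primal- and dual-optimal respectively. Since $\pi_*$ has marginals $\mu,\nu$, one has $\langle\mu,f_*\rangle+\langle\nu,g_*\rangle=\langle\pi_*,f_*\oplus g_*\rangle$, and strong duality rearranges into the Fenchel--Young equality
\[
\epsilon I_{\phi_+,\mu\otimes\nu}(\pi_*)+\epsilon\langle\mu\otimes\nu,\phi_+^*\circ\tfrac{1}{\epsilon}(f_*\oplus g_*-c)\rangle=\langle\pi_*,f_*\oplus g_*-c\rangle,
\]
equivalently $\pi_*\in\partial(\epsilon I_{\phi_+,\mu\otimes\nu})^*(f_*\oplus g_*-c)$. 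Invoking \citet[Theorem~2.10]{Borweinetal1993} (cf.\ \cite[Corollary~3.6]{Borweinetal1993}) decomposes this inclusion into exactly the three stated conditions: the pointwise bound $\tfrac{1}{\epsilon}(f_*\oplus g_*-c)\le\phi'(\infty)$ (finiteness of the conjugate integrand), the Radon--Nikodym identity for $\pi_c$ via Proposition~\ref{prop:weak-Legendre} (equality in the absolutely continuous part), and the concentration of $\pi_s$ on the locus where the bound is saturated. Conversely, assuming these three relations, a direct calculation retraces the Fenchel--Young equality and yields joint optimality of $\pi_*$ and $(f_*,g_*)$. The main obstacle I anticipate is the attainment part of the second stage, where simultaneous Lipschitz and sup-norm control of a maximizing sequence of potentials relies essentially on the quantitative bounds in Proposition~\ref{prop:properties-c-eps-phi-trans} together with a careful additive normalization, and on the full-support hypothesis $\support(\mu)=X$ (and symmetrically for $\nu$) to rule out degeneracies off the supports.
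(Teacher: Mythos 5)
Your proposal matches the paper's proof in essentially every respect: Proposition~\ref{primal_map_conjugate} evaluated at $\varphi=0$ for strong duality, Hanin-norm compactness plus lower semicontinuity for primal attainment, the replacement $g\mapsto f^{(c,\epsilon,\phi)}$ with additive normalization and a second transform to get uniform $\Vert\cdot\Vert_{\max}$ bounds for dual attainment, and the Young--Fenchel equality with \citet[Theorem~2.10]{Borweinetal1993} for the optimality conditions. The only divergence is cosmetic: you extract a $\Vert\cdot\Vert_\infty$-convergent subsequence via Arzel\`a--Ascoli where the paper uses Banach--Alaoglu and weak* lower semicontinuity of $I^*_{\phi_+,\mu\otimes\nu}$; both work from the same uniform bounds. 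One small caution: "dominated convergence" is not quite the right tool for passing to the limit in $\int \phi_+^*\circ\frac{1}{\epsilon}(f_n\oplus g_n-c)\,d\mu\otimes\nu$, since $\phi_+^*$ may be unbounded above as its argument approaches $\phi'(\infty)$ (e.g.\ reverse Kullback--Leibler); but you only need the one-sided inequality $\liminf_n\int\phi_+^*(\cdots)\ge\int\phi_+^*(\cdots_*)$, which follows from Fatou's lemma (the integrands are uniformly bounded below thanks to your sup-norm bounds) together with lower semicontinuity of $\phi_+^*$.
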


\begin{proof}
Since $\inf_{x \in X} \{ f(x) \} = -f^*(0)$ for any proper, convex and lower semicontinuous function $f$, by Proposition~\ref{primal_map_conjugate} one has
\begin{multline*}
\inf_{\pi \in \mathcal{M}(X \times Y)}\{ \langle \pi,c\rangle + \epsilon I_{\phi_+,\mu \otimes \nu}(\pi) + \iota_{\{(\mu,\nu)\}}(\pi(\cdot \times Y), \pi(X \times \cdot)) \} \\
= -\inf_{(f,g) \in \Lip(X) \times \Lip(Y)}\left\{ \epsilon I_{\phi_+,\mu \otimes \nu}^*\left(\frac{1}{\epsilon} (- c - f \oplus g)\right) + \langle \mu, f \rangle + \langle \nu, g \rangle \right\},
\end{multline*}
or equivalently
\begin{equation*}
\sup_{(f,g) \in \Lip(X) \times \Lip(Y)}\left\{ \langle \mu \otimes \nu, f \oplus g \rangle - \epsilon I_{\phi_+,\mu \otimes \nu}^*\left(\frac{1}{\epsilon} (f \oplus g - c)\right) \right\}.
\end{equation*}
Since $I_{\phi,\mu \otimes \nu}^*(\varphi) = \infty$ unless $\varphi(X) \subseteq [\phi'(-\infty),\phi'(\infty)]$ \citep[Proposition~7]{Terjek2021}, $\phi_+'(-\infty)=-\infty$ and $\phi_+'(\infty) = \phi'(\infty)$, one has the constraint $\frac{1}{\epsilon} (f \oplus g - c) \leq \phi'(\infty)$, leading to
\begin{equation*}
\sup_{ f \oplus g \leq c + \epsilon \phi'(\infty)}\left\{ \langle \mu \otimes \nu, f \oplus g \rangle - \epsilon \left\langle \mu \otimes \nu, \phi_+^* \circ \frac{1}{\epsilon} (f \oplus g - c) \right\rangle \right\}.
\end{equation*}

\noindent By definition of the $(c,\epsilon,\phi)$-transform, it is clear that 
\begin{equation}\label{eq:estimate}
    g - \epsilon \int  \phi_+^* \circ \frac{1}{\epsilon} (f \oplus g - c) \;d\mu \le f^{(c,\epsilon,\phi)} - \epsilon \int  \phi_+^* \circ \frac{1}{\epsilon} (f \oplus f^{(c,\epsilon,\phi)} - c) \;d\mu
\end{equation}
for every $y\in Y$. Thus we can always replace $g$ by $f^{(c,\epsilon,\phi)}$. A similar argument shows that we can always replace $f$ by $g^{(c,\epsilon,\phi)}$.

Let us now check that both the supremum and the infimum are attained. Let us start with the infimum. Let $\pi_n\in \Pi(\mu,\nu)$ be such that $\langle \pi_n, c \rangle + \epsilon I_{\phi, \mu \otimes \nu}(\pi_n)\to \inf_{\pi \in \Pi(\mu,\nu)}\{ \langle \pi, c \rangle + \epsilon I_{\phi, \mu \otimes \nu}(\pi) \}$ as $n\to \infty$. As the set of probability measures is a compact set in the Hanin norm \citep[Theorem~8.4.25(3),Theorem~8.5.7]{Cobzasetal2019} and any coupling is a probability measure, we can assume that there is a convergent subsequence (that abusing the notation we denote by $\pi_n$) such that $\pi_n\to \pi^*$ in the Hanin norm. Moreover, as $p_1^*$ and $p_2^*$ are continuous functions we know that $\pi^*\in \Pi(\mu,\nu)$. And finally note that as the function $\langle \cdot, c \rangle + \epsilon I_{\phi, \mu \otimes \nu}(\cdot)$ is lower semicontinuous we have that $\langle \pi^*, c \rangle + \epsilon I_{\phi, \mu \otimes \nu}(\pi^*)\le \lim_{n\to\infty} \langle \pi_n, c \rangle + \epsilon I_{\phi, \mu \otimes \nu}(\pi_n) = \inf_{\pi \in \Pi(\mu,\nu)}\{ \langle \pi, c \rangle + \epsilon I_{\phi, \mu \otimes \nu}(\pi) \}$, so that the maximum is achieved by $\pi^*$.

As for the supremum, we want to prove that 
\[
S:=\sup_{ f \oplus g \leq c + \epsilon \phi'(\infty)}\left\{ \langle \mu \otimes \nu, f \oplus g \rangle - \epsilon \left\langle \mu \otimes \nu, \phi_+^* \circ \frac{1}{\epsilon} (f \oplus g - c) \right\rangle \right\}
\]
is attained for some pair of functions $(f,g)\in \Lip(X)\times \Lip(Y)$. Let $(f_n,g_n)\in \Lip(X)\times \Lip(Y)$ be a sequence of functions such that $f_n\oplus g_n\le c+\epsilon\phi'(\infty)$ and $|S-\langle \mu,f_n\rangle-\langle \nu,g_n\rangle+\epsilon \left\langle \mu \otimes \nu, \phi_+^* \circ \frac{1}{\epsilon} (f_n \oplus g_n - c) \rangle \right|\le 1/n$. First note that by \eqref{eq:estimate} we can replace $g_n$ by $f_n^{(c,\epsilon,\phi)}$ and we are still at most $1/n$ away from $S$.  As $Y$ is compact and metric, it has finite diameter, $\diam(Y)=\sup_{y,y'\in Y} d_Y(y,y')<\infty$. By $(iii)$ of Proposition~\ref{prop:properties-c-eps-phi-trans}, the Lipschitz constant of $f_n^{(c,\epsilon,\phi)}$ is bounded by $\|c\|_L$ for all $n\ge 0$. Moreover, note that we can replace the pair $(f_n,f_n^{(c,\epsilon,\phi)})$ by $(f_n+a,f_n^{(c,\epsilon,\phi)}-a)$ for any constant $a\in \mathbb{R}$. Thus, taking $a=f_n^{(c,\epsilon,\phi)}(y_0)$ for some $y_0\in Y$ we have that $f_n^{(c,\epsilon,\phi)}-f_n^{(c,\epsilon,\phi)}(y_0)$ is a function with Lipschitz constant at most $\|c\|_L$ and $|f_n^{(c,\epsilon,\phi)}-f_n^{(c,\epsilon,\phi)}(y_0)|\le \|c\|_Ld_Y(y,y') \le \|c\|_L\diam(Y)$.

Now, again we use \eqref{eq:estimate} and instead of the pair $(f_n+f_n^{(c,\epsilon,\phi)}(y_0),f_n^{(c,\epsilon,\phi)}-f_n^{(c,\epsilon,\phi)}(y_0))$ we take $((f_n^{(c,\epsilon,\phi)}-f_n^{(c,\epsilon,\phi)}(y_0))^{(c,\epsilon,\phi)},f_n^{(c,\epsilon,\phi)}-f_n^{(c,\epsilon,\phi)}(y_0))$. By Proposition~\ref{prop:properties-c-eps-phi-trans} we know that the Lipschitz constant of $(f_n^{(c,\epsilon,\phi)}-f_n^{(c,\epsilon,\phi)}(y_0))^{(c,\epsilon,\phi)}$ is at most $\|c\|_L$ and that $\|(f_n^{(c,\epsilon,\phi)}-f_n^{(c,\epsilon,\phi)}(y_0))^{(c,\epsilon,\phi)}\|_{\infty} \le \|c\|_{\infty}+\|c\|_L\diam(Y)$. Thus, if we denote by $h_n:=f_n^{(c,\epsilon,\phi)}-f_n^{(c,\epsilon,\phi)}(y_0)$ we have that $|S-\langle \mu,h_n^{(c,\epsilon,\phi)}\rangle-\langle \nu,h_n\rangle|\le 1/n$ and $\|h_n\|_{\max} = \max\{\|h_n\|_{\infty},\|h_n\|_L\} \le (\diam(Y)+1)\|c\|_L$. Similarly we get that $\|h^{(c,\epsilon,\phi)}_n\|_{\max} = \max\{\|h^{(c,\epsilon,\phi)}_n\|_{\infty},\|h^{(c,\epsilon,\phi)}_n\|_L\} \le (\diam(Y)+1)\|c\|_L+\|c\|_{\infty}$. The key fact now is that these constants do not depend on $n$, and therefore, as $(\Lip(X),\|\cdot\|_{\max})$ is the dual of a normed space (namely $(\mathcal{M}(X),\|\cdot\|_H)$), by the Banach-Alaoglu theorem we know that the unit ball is compact in the weak* topology. Thus, we can assume (passing to a subsequence if necessary) that $h_n\to h$ and $h_n^{(c,\epsilon,\phi)}\to h'$ in the weak* topology. Using the fact that $I^*_{\phi_+,\mu\otimes\nu}$ is weak* lower semicontinuous \citep[Theorem~2.3.1]{Zalinescu2002} this implies that 
\[
S= \langle \mu \otimes \nu, h' \oplus h \rangle - \epsilon \left\langle \mu \otimes \nu, \phi_+^* \circ \frac{1}{\epsilon} (h' \oplus h - c) \right\rangle
\]
and similarly changing $h'$ by $h^{(c,\epsilon,\phi)}$ or $h$ by $h'^{(c,\epsilon,\phi)}$. Note that $h'(x)+h(y)\le c(x,y)+\epsilon\phi'(\infty)$ for all $(x,y)\in X\times Y$ as otherwise the right hand side of the previous equation will be $-\infty$.

If $\pi$ is optimal and $(f,g)$ are optimal potentials then 
\[
\langle \pi,c\rangle+\epsilon I_{\phi_+,\mu\otimes \nu}(\pi) = \langle \pi,f\oplus g\rangle-\epsilon I^*_{\phi_+,\mu\otimes \nu}\left(\frac{1}{\epsilon}(f\oplus g-c)\right),
\]
or equivalently
\[
\left\langle \pi,\frac{1}{\epsilon}(f\oplus g-c)\right\rangle = I_{\phi_+,\mu\otimes \nu}(\pi) +  I^*_{\phi_+,\mu\otimes \nu}\left(\frac{1}{\epsilon}(f\oplus g-c)\right).
\]
The optimality conditions then follow \citet[Theorem~2.10]{Borweinetal1993}. \end{proof}

We can say even a little more about the structure of the optimal potentials and coupling. A set $C \subset X \times Y$ is called $c$-cyclically monotone \citep[Definition~5.1]{Villani2008} if for any subset $\{ (x_1,y_1),\dots,(x_n,y_n) \} \subset C$ for $n \in \N$, one has
\begin{equation}
\sum_{i = 1}^n c(x_i,y_i) \leq \sum_{i = 1}^{n-1} c(x_i,y_{i+1}) + c(x_n,y_1).
\end{equation}

\begin{proposition}\label{prop:c-mon}
The $(c,\epsilon,\phi)$-subdifferential of $f \in\Lip(X)$ defined as
\begin{equation}
\partial_{(c,\epsilon,\phi)} f = \{ (x,y) \in X \times Y : f(x)+f^{(c,\epsilon,\phi)}(y) = c(x,y)+\epsilon \phi'(\infty) \},
\end{equation}
and the $(c,\epsilon,\phi)$-subdifferential of $g \in\Lip(Y)$ defined as
\begin{equation}
\partial_{(c,\epsilon,\phi)} g = \{ (x,y) \in X \times Y : g(y)+g^{(c,\epsilon,\phi)}(x) = c(x,y)+\epsilon \phi'(\infty) \}
\end{equation}
are both closed, $c$-cyclically monotone sets.
\end{proposition}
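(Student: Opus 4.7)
The plan is to handle two cases separately based on whether $\phi'(\infty)$ is finite, then prove closedness by a continuity argument and $c$-cyclic monotonicity by a standard telescoping-cancellation argument using the pointwise inequality from Proposition~\ref{prop:properties-c-eps-phi-trans}(ii).

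First I would dispense with the case $\phi'(\infty) = \infty$. Since $f$ and $f^{(c,\epsilon,\phi)}$ are Lipschitz functions on compact spaces, both are real-valued, so the defining equality $f(x) + f^{(c,\epsilon,\phi)}(y) = c(x,y) + \epsilon\phi'(\infty)$ has a finite left-hand side and an infinite right-hand side; hence $\partial_{(c,\epsilon,\phi)} f = \emptyset$, which is trivially closed and $c$-cyclically monotone. From here on I assume $\phi'(\infty) < \infty$.

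For closedness, I would observe that the map $(x,y) \mapsto f(x) + f^{(c,\epsilon,\phi)}(y) - c(x,y) - \epsilon\phi'(\infty)$ is continuous on $X \times Y$ (as a sum of continuous functions, using Proposition~\ref{prop:properties-c-eps-phi-trans}(iii) to know that $f^{(c,\epsilon,\phi)} \in \Lip(Y)$), so $\partial_{(c,\epsilon,\phi)} f$ is the preimage of the closed singleton $\{0\}$ and is therefore closed.

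For $c$-cyclic monotonicity, given $\{(x_1,y_1),\dots,(x_n,y_n)\} \subset \partial_{(c,\epsilon,\phi)} f$ and setting $y_{n+1} := y_1$, I would apply Proposition~\ref{prop:properties-c-eps-phi-trans}(ii) to each "rerouted" pair $(x_i, y_{i+1})$ to get $f(x_i) + f^{(c,\epsilon,\phi)}(y_{i+1}) \leq c(x_i, y_{i+1}) + \epsilon\phi'(\infty)$, and sum over $i=1,\dots,n$. On the left I re-index the sum $\sum_i f^{(c,\epsilon,\phi)}(y_{i+1}) = \sum_i f^{(c,\epsilon,\phi)}(y_i)$ (cyclic permutation) so that the left side equals $\sum_i (f(x_i) + f^{(c,\epsilon,\phi)}(y_i))$, which by the defining equality of $\partial_{(c,\epsilon,\phi)} f$ equals $\sum_i (c(x_i,y_i) + \epsilon\phi'(\infty))$. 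The $n\epsilon\phi'(\infty)$ terms then cancel from both sides, yielding the desired inequality $\sum_i c(x_i,y_i) \leq \sum_i c(x_i, y_{i+1})$.

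The argument for $\partial_{(c,\epsilon,\phi)} g$ with $g \in \Lip(Y)$ is entirely symmetric, replacing the roles of $X$ and $Y$ (and using the analogue of Proposition~\ref{prop:properties-c-eps-phi-trans} noted right after its statement). There is no real obstacle here: the only nontrivial input is Proposition~\ref{prop:properties-c-eps-phi-trans}(ii), which guarantees the pointwise inequality that makes the telescoping cancellation work, and the finite-valued cancellation of $\epsilon\phi'(\infty)$ is justified exactly in the case where $\partial_{(c,\epsilon,\phi)} f$ can be nonempty.
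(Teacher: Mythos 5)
Your proposal is correct and matches the paper's own proof essentially step for step: the vacuous case $\phi'(\infty)=\infty$, closedness as a level set of a continuous (Lipschitz) function, and $c$-cyclic monotonicity via summing the defining equalities against the pointwise inequality from Proposition~\ref{prop:properties-c-eps-phi-trans}(ii) with the cyclic re-indexing and cancellation of the $n\epsilon\phi'(\infty)$ terms. No gaps.
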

\begin{proof}
If $\phi'(\infty)=\infty$, then $\partial_{(c,\epsilon,\phi)} f = \partial_{(c,\epsilon,\phi)} g = \emptyset$, so the statement is vacuously true. Now assume that $\phi'(\infty) \in \R$. Being the level sets of Lipschitz continuous functions implies that both sets are closed. Let $\{ (x_1,y_1),\dots,(x_n,y_n) \} \subset \partial_{(c,\epsilon,\phi)} f$, so that one has
\begin{equation}
\sum_{i = 1}^n c(x_i,y_i) = \sum_{i = 1}^n [f(x_i)+f^{(c,\epsilon,\phi)}(y_i)-\epsilon \phi'(\infty)].
\end{equation}
On the other hand, one always has $c(x_i,y_j) + \epsilon \phi'(\infty) \geq f(x_i) + f^{(c,\epsilon,\phi)}(y_j)$, implying that
\begin{multline}
\sum_{i = 1}^{n-1} c(x_i,y_{i+1}) + c(x_n,y_1) \geq \sum_{i = 1}^{n-1}[f(x_i)+f^{(c,\epsilon,\phi)}(y_{i+1})-\epsilon \phi'(\infty)]+f(x_n)+f^{(c,\epsilon,\phi)}(y_1)-\epsilon \phi'(\infty)\\
=\sum_{i = 1}^n [f(x_i)+f^{(c,\epsilon,\phi)}(y_i)-\epsilon \phi'(\infty)].
\end{multline}
The last two equations imply the proposition for $\partial_{(c,\epsilon,\phi)} f$, and a symmetric argument clearly works for $\partial_{(c,\epsilon,\phi)} g$.
\end{proof}

\begin{proposition}\label{prop:properties-optimals} Let $\mu \in P(X)$ and $\nu \in P(Y)$ be probability measures of full support on compact metric spaces $(X,d_X)$ and $(Y,d_Y)$. Let $c \in \Lip(X \times Y)$, $0 < \epsilon \in \R$ be a regularization coefficient  and $\phi : \R \to \overline{\R}$ a proper, convex and lower semicontinuous function  of Legendre type. Let $\pi' \in \Pi(\mu,\nu)$ be an optimal coupling for the primal problem. Let $\pi'_c$ be its absolutely continuous part with respect to $\mu\otimes\nu$ and $\pi'_s$ the singular part. Let also $(f',g') \in \Lip(X) \times \Lip(Y)$ be a pair of optimal potentials. Then $\frac{d\pi'_c}{d\mu\otimes \nu}$ is unique for any optimal coupling. If $(\tilde{f},\tilde{g})\in\Lip(X)\times\Lip(Y)$ are also optimal potentials then $f'\oplus g' = \tilde{f}\oplus\tilde{g}$ $\pi'_c$-a.e.. If ${\phi_+^*}'$ is invertible in $(-\infty,\phi'(\infty))$ then any optimal potential equals $(f'+a,g'-a)$ for some $a\in \mb{R}$. Finally, the support of $\pi_s$ lies in the intersection of the $(c,\epsilon,\phi)$-subdifferentials of all optimal dual variables.
\end{proposition}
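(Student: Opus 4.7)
The plan is to read everything off of the complementary-slackness conditions from Theorem~\ref{thm:strong-dual-primal}, which say that for \emph{any} optimal primal-dual pair $(\pi,(f,g))$ one has
\[
\tfrac{d\pi_c}{d\mu\otimes\nu}={\phi_+^*}'\circ\tfrac{1}{\epsilon}(f\oplus g-c)
\qquad\text{and}\qquad
\support(\pi_s)\subset\{f\oplus g-c=\epsilon\phi'(\infty)\},
\]
combined with the injectivity properties of ${\phi_+^*}'$ from Proposition~\ref{prop:weak-Legendre}.

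First I would fix the optimal potentials $(f',g')$ and apply the density formula above with each optimal coupling $\pi$ in turn: the right-hand side depends only on $(f',g')$, so every optimal coupling shares the same Radon--Nikodym derivative with respect to $\mu\otimes\nu$, giving uniqueness of $d\pi'_c/d(\mu\otimes\nu)$. Next, for another optimal pair $(\tilde f,\tilde g)$, applying the same density formula with $\pi'$ to both $(f',g')$ and $(\tilde f,\tilde g)$ yields
\[
{\phi_+^*}'\circ\tfrac{1}{\epsilon}(f'\oplus g'-c)={\phi_+^*}'\circ\tfrac{1}{\epsilon}(\tilde f\oplus\tilde g-c)\quad \mu\otimes\nu\text{-a.e.}
\]
On the set where this common value is strictly positive --- which is, up to a $\mu\otimes\nu$-null set, exactly $\support(\pi'_c)$ --- Proposition~\ref{prop:weak-Legendre} tells us ${\phi_+^*}'$ is injective, so we can cancel it and obtain $f'\oplus g'=\tilde f\oplus\tilde g$ $\pi'_c$-a.e.

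For the rigidity statement, assume ${\phi_+^*}'$ is injective on all of $(-\infty,\phi'(\infty))$. Then the same cancellation works $\mu\otimes\nu$-everywhere (on the set where $(f'\oplus g'-c)/\epsilon<\phi'(\infty)$, which is automatic where needed by the dual constraint). Since $f',g',\tilde f,\tilde g$ are continuous and $\support(\mu)=X$, $\support(\nu)=Y$, this $\mu\otimes\nu$-a.e. equality extends to every $(x,y)\in X\times Y$. Fixing any $y_0$, the quantity $\tilde f(x)-f'(x)=g'(y_0)-\tilde g(y_0)=:a$ is then a constant, so $\tilde f=f'+a$ and $\tilde g=g'-a$.

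Finally, for the singular part, the inclusion $\support(\pi'_s)\subset\{f\oplus g-c=\epsilon\phi'(\infty)\}$ from Theorem~\ref{thm:strong-dual-primal} is exactly $\support(\pi'_s)\subset\partial_{(c,\epsilon,\phi)}f\cap\partial_{(c,\epsilon,\phi)}g$ once one recalls that in the proof of Theorem~\ref{thm:strong-dual-primal} any optimal pair may be taken with $g=f^{(c,\epsilon,\phi)}$ and $f=g^{(c,\epsilon,\phi)}$ (by the replacement argument using \eqref{eq:estimate}). Since this inclusion holds for every optimal dual pair, $\support(\pi'_s)$ lies in the intersection over all of them. I expect the main friction point to be Step 2, namely being careful that cancellation of ${\phi_+^*}'$ is only licensed on the positivity set, and that this set is precisely where $\pi'_c$ is supported modulo $\mu\otimes\nu$-null sets; everything else is a direct consequence of the duality characterization and Proposition~\ref{prop:weak-Legendre}.
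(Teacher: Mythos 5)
Your proposal is correct and follows essentially the same route as the paper's proof: both read off the uniqueness of the density, the $\pi'_c$-a.e.\ identity of $f\oplus g$, the rigidity under global invertibility of ${\phi_+^*}'$, and the singular-support inclusion from the mixed primal--dual Young--Fenchel equality (the paper cites \citet[Theorem~2.10]{Borweinetal1993} for this), cancelling ${\phi_+^*}'$ only on its positivity set and then using continuity plus full support. Your explicit derivation of the constant $a$ via a fixed $y_0$ is a detail the paper leaves implicit, but it is not a different argument.
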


\begin{proof} Let $\pi^1,\pi^2 \in \Pi(\mu,\nu)$ and $(f^1,g^1), (f^2,g^2) \in \Lip(X) \times \Lip(Y)$ be optimal primal and dual variables. If $g^j = {f^j}^{(c,\epsilon,\phi)}$ and $f^j = {g^j}^{(c,\epsilon,\phi)}$ would not hold for $j \in \{1,2\}$, one could replace $g^j$ with ${f^j}^{(c,\epsilon,\phi)}$ to increase the value of the dual problem, contradicting optimality of $(g^j,f^j)$. By optimality, one has
\begin{equation}
\int c d\pi^i + \epsilon I_{\phi_+,\mu \otimes \nu}(\pi^i)
= \int f^j \oplus g^j d\mu \otimes \nu - \epsilon I_{\phi_+,\mu \otimes \nu}^*\left( \frac{1}{\epsilon}(f^j \oplus g^j - c) \right)
\end{equation}
for $i,j \in \{1,2\}$. Since $\pi^i \in \Pi(\mu,\nu)$, one has $\int f^j \oplus g^j d\mu \otimes \nu = \int f^j \oplus g^j d\pi^i$, so we can rearrange as
\begin{equation}
I_{\phi_+,\mu \otimes \nu}(\pi^i) + I_{\phi_+,\mu \otimes \nu}^*\left( \frac{1}{\epsilon}(f^j \oplus g^j - c) \right)
= \int \frac{1}{\epsilon}(f^j \oplus g^j - c) d\pi^i.
\end{equation}
By \citet[Theorem~2.10]{Borweinetal1993}, since $\phi_+'(-\infty)=-\infty$, this holds if and only if
\begin{equation}
\frac{1}{\epsilon}(f^j \oplus g^j - c) \leq \phi'(\infty),
\end{equation}
\begin{equation}\label{eq:cont-part}
\frac{d\pi^i_c}{d\mu \otimes \nu} = {\phi_+^*}' \circ \frac{1}{\epsilon}(f^j \oplus g^j - c) \ \mu \otimes \nu\text{-a.e.}
\end{equation}
and
\begin{equation}\label{eq:singular-part}
\support(\pi^i_s) \subset \left\{ (x,y) \in X \times Y : \frac{1}{\epsilon}(f^j(x) + g^j(y) - c(x,y)) = \phi'(\infty) \right\},
\end{equation}
where one has $\left\{ (x,y) \in X \times Y : \frac{1}{\epsilon}(f^j(x) + g^j(y) - c(x,y)) = \phi'(\infty) \right\} = \partial_{(c,\epsilon,\phi)}f^j = \partial_{(c,\epsilon,\phi)}g^j$. As \eqref{eq:cont-part} holds for fixed $j$ and $i=1,2$ we have that the absolutely continuous part of any optimal coupling is unique. If we let $C:=\{(x,y)\in X\times Y: {\phi_+^*}'(\frac{1}{\epsilon}(f^1\oplus g^1-c))={\phi_+^*}'(\frac{1}{\epsilon}(f^2\oplus g^2-c))\}$ we know that $\mu\otimes \nu(C)=1$. As ${\phi_+^*}'\ge 0$ and it is invertible in the points where ${\phi_+^*}'> 0$ by Proposition~\ref{prop:weak-Legendre} if $P:=\{(x,y)\in X\times Y: {\phi_+^*}'(\frac{1}{\epsilon}(f^1\oplus g^1-c))>0\}$ we know that for all $(x,y)\in C\cap P$ we have $f^1\oplus g^1 =f^2\oplus g^2$. But clearly $\pi^1_c(C\cap P) = \pi^1_c(X\times Y)$.

Furthermore, if ${\phi_+^*}'$ is invertible in its domain from the same equation we deduce that $f^1\oplus g^1 = f^2\oplus g^2$ $\mu\otimes\nu$-a.e. As $\mu$ and $\nu$ have full support so do $\mu\otimes\nu$, and as $f^j$ and $g^j$ for $j=1,2$ are continuous functions, then $f^1\oplus g^1 = f^2\oplus g^2$ must hold for every $(x,y)\in X\times Y$\footnote{Here we use an standard continuity argument. If for some $(x_0,y_0)\in X\times Y$ we have $f^1(x_0)+ g^1(y_0) \not= f^2(x_0)+ g^2(y_0)$ then this will hold in an open neighborhood of $(x_0,y_0)$. But this will contradict the fact that $f^1\oplus g^1 = f^2\oplus g^2$ $\mu\otimes\nu$-a.e. as any open set has positive measure if the measure has full support.}. The last part of the proposition follows from \eqref{eq:singular-part} for a fixed $i$ and any $j=1,2$.\end{proof}

\begin{definition}[Sinkhorn operator] Let $X$ and $Y$ be compact metric spaces and $\mu\in P(X)$, $\nu\in P(Y)$ be Borel probability measures of full support. Let also $c \in \Lip(X \times Y)$, $0 < \epsilon \in \R$ be a regularization coefficient  and $\phi : \R \to \overline{\R}$ a proper, convex and lower semicontinuous function  of Legendre type. Fix any point $y_0\in Y$. Given a pair $(f,g)\in \Lip(X)\times \Lip(Y)$ we define the operator $\mc{F}^{(c,\epsilon,\phi)}:\Lip(X)\times \Lip(Y)\to \Lip(X)\times \Lip(Y)$ as
\[
\mc{F}^{(c,\epsilon,\phi)}(f,g):= ((f^{(c,\epsilon,\phi)}-f^{(c,\epsilon,\phi)}(y_0))^{(c,\epsilon,\phi)},f^{(c,\epsilon,\phi)}-f^{(c,\epsilon,\phi)}(y_0))
\]
\end{definition}

Technically this operator depends also on the point $y_0$ but as it is not very important which point it is, we decided not to put it in the definition of Sinkhorn iteration. This operator has the following very nice property:

\begin{proposition}\label{prop:sink-cpct-op} Let $X$ and $Y$ be compact metric spaces and $\mu\in P(X)$, $\nu\in P(Y)$ be Borel probability measures of full support. Let also $c \in \Lip(X \times Y)$, $0 < \epsilon \in \R$ be a regularization coefficient  and $\phi : \R \to \overline{\R}$ a proper, convex and lower semicontinuous function  of Legendre type. Fix any point $y_0\in Y$. Then for any $(f,g)\in \Lip(X)\times \Lip(Y)$ we have $\|\mc{F}^{(c,\epsilon,\phi)}(f,g)\|_{\max} \le K$ where $K$ depends only on the diameters of $X$ and $Y$, $\epsilon$ and on $\|c\|_{\max}$. Moreover, $\mc{F}^{(c,\epsilon,\phi)}$ is continuous with respect to the $\|\cdot\|_{\infty}$-norm\footnote{In the space $\Lip(X)\times \Lip(Y)$ we define the $\|\cdot\|_{\infty}$ norm as $\|(f,g)\|_{\infty}:=\max(\|f\|_{\infty},\|g\|_{\infty})$ for any $(f,g)\in\Lip(X)\times\Lip(Y)$.}.\end{proposition}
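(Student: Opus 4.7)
The plan is to derive both claims directly from the properties of the $(c,\epsilon,\phi)$-transform already established in Proposition~\ref{prop:properties-c-eps-phi-trans}. Write $(f',g') := \mc{F}^{(c,\epsilon,\phi)}(f,g)$, so $g' = f^{(c,\epsilon,\phi)} - f^{(c,\epsilon,\phi)}(y_0)$ and $f' = (g')^{(c,\epsilon,\phi)}$. The essential observation is that subtracting the constant $f^{(c,\epsilon,\phi)}(y_0)$ kills the pointwise ambiguity expressed in property $(v)$ (translating the potential), which is why the whole Sinkhorn update remains in a compact region irrespective of the starting pair.

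For the uniform boundedness, I would proceed in two steps. First, by property $(iii)$, $\|g'\|_L = \|f^{(c,\epsilon,\phi)}\|_L \le \|c\|_L$. Since $g'(y_0) = 0$ by construction, the Lipschitz bound yields $\|g'\|_\infty \le \|c\|_L \cdot \diam(Y)$, hence $\|g'\|_{\max}$ is controlled in terms of $\diam(Y)$ and $\|c\|_L$. Second, applying properties $(iii)$ and $(iv)$ to $f' = (g')^{(c,\epsilon,\phi)}$ gives $\|f'\|_L \le \|c\|_L$ and $\|f'\|_\infty \le \|g'\|_\infty + \|c\|_\infty$ in the superlinear case, or $\|f'\|_\infty \le \|g'\|_\infty + \|c\|_\infty + \epsilon\phi'(\infty)$ otherwise. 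Combining yields
\[
\|\mc{F}^{(c,\epsilon,\phi)}(f,g)\|_{\max} \le \max\bigl\{ \|c\|_L,\; \|c\|_L\diam(Y) + \|c\|_\infty + \epsilon\phi'(\infty) \bigr\},
\]
where $\epsilon\phi'(\infty)$ is dropped if $\phi'(\infty) = \infty$. The right-hand side depends only on $\diam(X),\diam(Y),\|c\|_{\max},\epsilon$ (and on $\phi$ through $\phi'(\infty)$, which is part of the fixed data).

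For continuity, I would rely on property $(vi)$, which states that the map $f \mapsto f^{(c,\epsilon,\phi)}$ is $1$-Lipschitz from $(\Lip(X),\|\cdot\|_\infty)$ to $(\Lip(Y),\|\cdot\|_\infty)$. Suppose $(f_n,g_n) \to (f,g)$ in $\|\cdot\|_\infty$. Then $f_n^{(c,\epsilon,\phi)} \to f^{(c,\epsilon,\phi)}$ in $\|\cdot\|_\infty$ by $(vi)$, and in particular $f_n^{(c,\epsilon,\phi)}(y_0) \to f^{(c,\epsilon,\phi)}(y_0)$ because pointwise evaluation is continuous with respect to $\|\cdot\|_\infty$. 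Writing $g'_n := f_n^{(c,\epsilon,\phi)} - f_n^{(c,\epsilon,\phi)}(y_0)$ and $g' := f^{(c,\epsilon,\phi)} - f^{(c,\epsilon,\phi)}(y_0)$, we obtain $g'_n \to g'$ in $\|\cdot\|_\infty$. Applying $(vi)$ once more to the second coordinate of the Sinkhorn operator yields $f'_n = (g'_n)^{(c,\epsilon,\phi)} \to (g')^{(c,\epsilon,\phi)} = f'$ in $\|\cdot\|_\infty$, finishing the proof. Note that the input $g$ does not enter any of the expressions, so continuity in the product topology is immediate once continuity in $f$ is established.

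There is no serious obstacle here: the whole argument amounts to chaining the bounds and the $1$-Lipschitz continuity of the transform. The only minor point requiring care is the case split $\phi'(\infty) = \infty$ vs.\ $\phi'(\infty) < \infty$ in property $(iv)$, which affects the exact form of the constant $K$ but not the argument itself.
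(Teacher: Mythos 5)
Your proof is correct and follows essentially the same route as the paper: uniform boundedness via properties $(iii)$ and $(iv)$ of Proposition~\ref{prop:properties-c-eps-phi-trans} together with the normalization $g'(y_0)=0$ and the finiteness of $\diam(Y)$, and continuity by chaining the $1$-Lipschitz property $(vi)$ with continuity of pointwise evaluation. Your write-up of the continuity step is in fact slightly more detailed than the paper's, which simply asserts that it "follows easily from $(vi)$".
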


\begin{proof} By $(iii)$ of Proposition~\ref{prop:properties-c-eps-phi-trans} we have that the Lipschitz constants of $f^{(c,\epsilon,\phi)}$ and $(f^{(c,\epsilon,\phi)}-f^{(c,\epsilon,\phi)}(y_0))^{(c,\epsilon,\phi)}$ are uniformly bounded by $\|c\|_L$. As clearly $f^{(c,\epsilon,\phi)}-f^{(c,\epsilon,\phi)}(y_0)$ is a function that attains the value 0 we have that $\|f^{(c,\epsilon,\phi)}-f^{(c,\epsilon,\phi)}(y_0)\|_{\infty} \le \|f^{(c,\epsilon,\phi)}\|_L\sup_{y,y'\in Y}d_Y(y,y') \le \|c\|_L\diam(Y)$ where the diameter of $Y$ is finite because $Y$ is compact. Using $(iv)$ of Proposition~\ref{prop:properties-c-eps-phi-trans} we have that $\|(f^{(c,\epsilon,\phi)}-f^{(c,\epsilon,\phi)}(y_0))^{(c,\epsilon,\phi)}\|_{\infty} \le \|(f^{(c,\epsilon,\phi)}-f^{(c,\epsilon,\phi)}(y_0))\|_{\infty}+\|c\|_{\infty}+\epsilon\phi'(\infty)\chi_{\R}(\phi'(\infty))$ (where the last summand vanishes if $\phi'(\infty)=\infty$). The last part of the proposition follows easily from $(vi)$ of Proposition~\ref{prop:properties-c-eps-phi-trans}.\end{proof}

\begin{definition}[Good triple] Let $X$ be a compact metric space and $\mu$ a Borel probability measure on $X$. Let $\phi$ be a proper, convex and lower semicontinuous function of Legendre type and suppose that $\phi'(\infty)<\infty$. Let also $C>0$ be a constant. We say that $(X,\mu,\phi)$ is a \emph{good triple} with respect to $C$ if for all $x_0\in X$
\[
\lim_{\delta\downarrow 0}\int_X {\phi_+^*}'(\phi'(\infty)-Cd(x_0,x)-\delta)\;d\mu(x) > 1.
\]
\end{definition}

As we said in the main body of the paper, this condition is the one which ultimately will allow us to prevent the $(c,\epsilon,\phi)$-transform to collapse to the $c$-transform plus $\epsilon\phi'(\infty)$. More specifically, in the next proposition we will see how $\max_{x\in X,y\in Y}\{\frac{1}{\epsilon}(f(x)+f^{(c,\epsilon,\phi)}(y)-c(x,y))\}$ is separated from the critical value $\phi'(\infty)$ assuming this condition.

\begin{proposition}\label{prop:unif_separation} Let $X$ and $Y$ be compact metric spaces and $\mu\in P(X)$, $\nu\in P(Y)$ be Borel probability measures of full support. Let also $c \in \Lip(X \times Y)$, $0 < \epsilon \in \R$ be a regularization coefficient  and $\phi : \R \to \overline{\R}$ a proper, convex and lower semicontinuous function  of Legendre type. Suppose that $(X,\mu,\phi)$ is a good triple with respect to $2\|c\|_L/\epsilon$. Then for any $f\in \Lip(X)$ with $\|f\|_L\le \|c\|_L$ we have that $\max_{x\in X,y\in Y}\{\frac{1}{\epsilon}(f(x)+f^{(c,\epsilon,\phi)}(y)-c(x,y))\} \le \phi'(\infty)-\tau$ for some positive constant $\tau>0$.\end{proposition}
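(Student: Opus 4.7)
My plan is to argue by contradiction. Let $A(x,y) := \tfrac{1}{\epsilon}(f(x) + f^{(c,\epsilon,\phi)}(y) - c(x,y))$, so that Proposition~\ref{prop:properties-c-eps-phi-trans}(ii) gives $A \le \phi'(\infty)$ pointwise. If the proposition fails then $\sup_{(x,y)} A(x,y) = \phi'(\infty)$ and for every $\tau > 0$ one can pick $(x^*,y^*)$ with $A(x^*,y^*) > \phi'(\infty) - \tau$. I will use the Lipschitz bounds on $f$ and $c$ to transport this near-maximal value of $A$ to a whole neighborhood of $x^*$, then use the Good Triple hypothesis to force $\int_X {\phi_+^*}'(A(x,y^*)) \, d\mu(x) > 1$, and finally contradict an a priori bound of $1$ on the same integral.

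The a priori upper bound comes directly from Proposition~\ref{prop:properties-c-eps-phi-trans}(i): for every $y \in Y$,
\[
\int_X {\phi_+^*}'\left(\tfrac{1}{\epsilon}(f(x) + f^{(c,\epsilon,\phi)}(y) - c(x,y))\right) d\mu(x) \le 1.
\]
Indeed, this integral is the mass of the absolutely continuous part $\widehat{\xi}_c$ of the primal optimizer $\widehat{\xi}$ constructed in the proof of that proposition, and $\widehat{\xi}$ is a probability measure with a nonnegative singular part.

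Next I would upgrade the Good Triple inequality from pointwise to uniform in $x_0$. Setting $\Psi(x_0,\delta) := \int_X {\phi_+^*}'(\phi'(\infty) - \tfrac{2\|c\|_L}{\epsilon} d_X(x_0,x) - \delta) \, d\mu(x)$, monotone convergence as $\delta \downarrow 0$ combined with the hypothesis gives, for each $x_0$, some $\delta(x_0) > 0$ with $\Psi(x_0,\delta(x_0)) > 1$. Since ${\phi_+^*}'$ is bounded on $(-\infty, \phi'(\infty) - \delta]$ by Proposition~\ref{prop:weak-Legendre}, dominated convergence gives continuity of $\Psi(\cdot, \delta)$ on $X$ for each $\delta > 0$. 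A finite subcover of $X$ by neighborhoods where $\Psi(\cdot, \delta(x_0)) > 1$, plus monotonicity in $\delta$, produces a single $\delta_0 > 0$ with $\Psi(\cdot, \delta_0) > 1$ on $X$; compactness and continuity then give a strict gap $\eta > 0$ with $\Psi(\cdot, \delta_0) \ge 1 + \eta$.

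To finish, pick $(x^*, y^*)$ with $A(x^*, y^*) > \phi'(\infty) - \delta_0$. Since $\|f\|_L \le \|c\|_L$ and $c$ is $\|c\|_L$-Lipschitz in its first coordinate, $A(x, y^*) \ge A(x^*, y^*) - \tfrac{2\|c\|_L}{\epsilon} d_X(x, x^*)$ for every $x \in X$, so applying the nondecreasing ${\phi_+^*}'$ and integrating yields
\[
\int_X {\phi_+^*}'(A(x,y^*)) \, d\mu(x) \ge \Psi(x^*, \delta_0) \ge 1 + \eta > 1,
\]
contradicting the a priori bound. The main obstacle is the uniform upgrade of the Good Triple: since ${\phi_+^*}'$ generally blows up as its argument approaches $\phi'(\infty)$, one must carefully keep a positive buffer $\delta$ away from this singularity so that a bounded dominating function is available for the continuity/compactness step.
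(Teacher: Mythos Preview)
Your proof is correct and follows essentially the same route as the paper: bound $A(x,y^*)$ below by $A(x^*,y^*)-\tfrac{2\|c\|_L}{\epsilon}d_X(x,x^*)$ via the Lipschitz estimates, apply the monotone ${\phi_+^*}'$, integrate, and contradict the a priori bound $\int_X {\phi_+^*}'(A(\cdot,y^*))\,d\mu \le 1$ coming from Proposition~\ref{prop:properties-c-eps-phi-trans}. The one place where you are more explicit than the paper is the compactness upgrade of the Good Triple condition to a uniform $\delta_0$ independent of $x_0$; the paper simply asserts that the resulting threshold ``is independent from the point $y$'', whereas you justify this via continuity of $\Psi(\cdot,\delta)$ and a finite subcover, which is indeed the missing detail needed to make that claim rigorous.
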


\begin{proof} Fix any $y\in Y$ and let $x_y\in X$ be such that the maximum of $\{\frac{1}{\epsilon}(f(x)+f^{(c,\epsilon,\phi)}(y)-c(x,y))\}$ with respect to $x\in X$ is attained at $x_y$ (it always exists because $X$ is compact and the functional continuous). Then
\[
\left|\frac{1}{\epsilon}(f(x)+f^{(c,\epsilon,\phi)}(y)-c(x,y))-\frac{1}{\epsilon}(f(x_y)+f^{(c,\epsilon,\phi)}(y)-c(x_y,y))\right|\le \frac{2\|c\|_L}{\epsilon}d_X(x,x_y).
\]
As ${\phi_+^*}'$ is increasing we have that

\begin{multline*}
    \int_X {\phi_+^*}'\left(\frac{1}{\epsilon}(f(x)+f^{(c,\epsilon,\phi)}(y)-c(x,y))\right)d\mu \ge\\ \int_X {\phi_+^*}'\left(\frac{1}{\epsilon}(f(x_y)+f^{(c,\epsilon,\phi)}(y)-c(x_y,y))-\frac{2\|c\|_L}{\epsilon}d_X(x,x_y)\right)d\mu.
\end{multline*}
Relabeling $\frac{1}{\epsilon}(f(x_y)+f^{(c,\epsilon,\phi)}(y)-c(x_y,y))$ as $\phi'(\infty)-\delta$ we see that this is a contradiction if $\delta$ is too small because the left hand side has to integrate to a value at most 1 by (the proof of) Proposition~\ref{prop:properties-c-eps-phi-trans}. By the definition of good triple we see that this is independent from the point $y\in Y$ so it holds for all of them. \end{proof}

Let us now state our main final result, which we will prove in several steps:

\begin{theorem}\label{thm:main-conv} Let $X$ and $Y$ be compact metric spaces and $\mu\in P(X)$, $\nu\in P(Y)$ be Borel probability measures of full support. Let also $c \in \Lip(X \times Y)$, $0 < \epsilon \in \R$ be a regularization coefficient  and $\phi : \R \to \overline{\R}$ a proper, convex and lower semicontinuous function  of Legendre type. Suppose that $(X,\mu,\phi)$ and $(Y,\nu,\phi)$ are a good triples with respect to $2\|c\|_L/\epsilon$. Take any pair $(f,g)\in \Lip(X)\times \Lip(Y)$ and define inductively $(f_0,g_0):=(f,g)$ and $(f_n,g_n):=\mc{F}^{(c,\epsilon,\phi)}(f_{n-1},g_{n-1})$ for $n\ge 1$. Let us also define the dual functional for any pair of functions $(f,g)\in \Lip(X)\times \Lip(Y)$:
\[
D_{\epsilon}(f,g):= \langle \mu,f\rangle+\langle \nu,g\rangle-\epsilon I^*_{\phi_+,\mu\otimes\nu}\left( \frac{1}{\epsilon}(f\oplus g-c)\right).
\]
The optimal primal problem $\OT_{\epsilon}(\mu,\nu)$ is defined as
\[
\OT_{\epsilon}(\mu,\nu):=\inf_{\pi\in \Pi(\mu,\nu)}\{\langle \pi,c\rangle+\epsilon I_{\phi_+,\mu\otimes\nu}(\pi)\}.
\]
Then $D_{\epsilon}(f_n,g_n)\to \OT_{\epsilon}(\mu,\nu)$ as $n\to \infty$. Also, there exists a unique optimal coupling $\tilde{\pi}$ that attains the infimum in $\OT_{\epsilon}(\mu,\nu)$ and if $(\tilde{f},\tilde{g})$ are optimal potentials for the dual problem we have that $f_n\oplus g_n\to \tilde{f}\oplus \tilde{g}$ in $L^{\infty}(\pi)$. \end{theorem}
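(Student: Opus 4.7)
The plan is to combine monotonicity of the dual functional $D_\epsilon$ along Sinkhorn iterations with Arzelà-Ascoli compactness to extract convergent subsequences, and to identify any such limit with an optimal pair of dual potentials. First I would show that $(D_\epsilon(f_n, g_n))_{n \geq 0}$ is nondecreasing and bounded above by $\OT_\epsilon(\mu, \nu)$. For any $f \in \Lip(X)$, the $(c,\epsilon,\phi)$-transform $f^{(c,\epsilon,\phi)}$ is by construction the pointwise (in $y$) maximizer of $\gamma \mapsto \gamma - \epsilon \int \phi_+^*(\frac{1}{\epsilon}(f + \gamma - c(\cdot, y))) d\mu$; integrating this pointwise inequality against $\nu$ shows that $\tilde{h} := f^{(c,\epsilon,\phi)}$ globally maximizes $g \mapsto D_\epsilon(f, g)$. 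Combined with the translation invariance $D_\epsilon(f + a, g - a) = D_\epsilon(f, g)$ and the symmetric pointwise maximization in the $f$-variable, one Sinkhorn step can only raise $D_\epsilon$. The upper bound by $\OT_\epsilon(\mu, \nu)$ is weak duality from Theorem~\ref{thm:strong-dual-primal}, so $D_\epsilon(f_n, g_n) \uparrow L$ for some $L \leq \OT_\epsilon(\mu, \nu)$.

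Next, Proposition~\ref{prop:sink-cpct-op} gives uniform bounds on $\|(f_n, g_n)\|_{\max}$ for $n \geq 1$, so by Arzelà-Ascoli the sequence is precompact in $\|\cdot\|_\infty$. From any subsequence I would extract a uniformly convergent further subsequence $(f_{n_k}, g_{n_k}) \to (\tilde{f}, \tilde{g})$; the continuity of $\mc{F}^{(c,\epsilon,\phi)}$ in $\|\cdot\|_\infty$ (also Proposition~\ref{prop:sink-cpct-op}) together with a diagonal argument then forces $\mc{F}^{(c,\epsilon,\phi)}(\tilde{f}, \tilde{g}) = (\tilde{f}, \tilde{g})$.

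The main obstacle, as I see it, is identifying every such fixed point with an optimal dual pair. Writing $\tilde{h} = \tilde{f}^{(c,\epsilon,\phi)}$ and $a = \tilde{h}(y_0)$, the fixed-point condition combined with property $(v)$ of Proposition~\ref{prop:properties-c-eps-phi-trans} gives $\tilde{g} = \tilde{h} - a$ and $\tilde{f} = \tilde{h}^{(c,\epsilon,\phi)} + a$. The pointwise maximization property and translation invariance of $D_\epsilon$ then yield
\begin{equation*}
D_\epsilon(\tilde{f}, \tilde{g}) \leq D_\epsilon(\tilde{f}, \tilde{h}) \leq D_\epsilon(\tilde{h}^{(c,\epsilon,\phi)}, \tilde{h}) = D_\epsilon(\tilde{f} - a, \tilde{h}) = D_\epsilon(\tilde{f}, \tilde{h} - a) = D_\epsilon(\tilde{f}, \tilde{g}),
\end{equation*}
so both inequalities must be equalities. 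Under the good triple (or superlinearity) assumption, Proposition~\ref{prop:unif_separation} together with Proposition~\ref{prop:properties-c-eps-phi-trans}(i) shows that the pointwise maximum defining $f^{(c,\epsilon,\phi)}$ is strictly attained, since the maximizer lies strictly inside the constraint where ${\phi_+^*}'' > 0$. Equality of the integrals then forces $\tilde{g}(y) = \tilde{h}(y)$ for $\nu$-almost every $y$, which upgrades to everywhere by continuity and the full support of $\nu$; hence $a = 0$ and $\tilde{f} = \tilde{g}^{(c,\epsilon,\phi)}$, so Theorem~\ref{thm:f-kantorovich-duality} identifies $(\tilde{f}, \tilde{g})$ as optimal and gives $L = \OT_\epsilon(\mu, \nu)$.

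Finally, for the $L^\infty(\pi)$ claim, Proposition~\ref{prop:unif_separation} (or superlinearity) forces $\partial_{(c,\epsilon,\phi)}\tilde{f} = \emptyset$ for any optimal $\tilde{f}$, so every optimal coupling is absolutely continuous with respect to $\mu \otimes \nu$ and is therefore uniquely given by $\pi = {\phi_+^*}' \circ \frac{1}{\epsilon}(\tilde{f} \oplus \tilde{g} - c) \cdot \mu \otimes \nu$. By Proposition~\ref{prop:properties-optimals}, any two pairs of optimal potentials agree as $f \oplus g$ on a set of full $\pi$-measure, hence determine the same class in $L^\infty(\pi)$. A standard subsequence argument (every subsequence of $(f_n \oplus g_n)$ has a further uniformly convergent subsequence, and all such uniform limits coincide in $L^\infty(\pi)$) then upgrades subsequential convergence to convergence of the full sequence $f_n \oplus g_n \to \tilde{f} \oplus \tilde{g}$ in $L^\infty(\pi)$.
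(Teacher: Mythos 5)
Your overall architecture (monotonicity of $D_\epsilon$ along iterations, Arzel\`a--Ascoli compactness, identification of subsequential limits, uniqueness via emptiness of the subdifferential) matches the paper's, and several steps are sound. But there are two genuine gaps. First, the claim that continuity of $\mc{F}^{(c,\epsilon,\phi)}$ plus ``a diagonal argument'' forces $\mc{F}^{(c,\epsilon,\phi)}(\tilde{f},\tilde{g})=(\tilde{f},\tilde{g})$ does not follow: from $(f_{n_k},g_{n_k})\to(\tilde f,\tilde g)$ and continuity you only get $(f_{n_k+1},g_{n_k+1})\to\mc{F}^{(c,\epsilon,\phi)}(\tilde f,\tilde g)$, and $(f_{n_k+1},g_{n_k+1})$ is not a subsequence of your convergent subsequence, so it need not converge to $(\tilde f,\tilde g)$. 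The paper never claims the limit is a fixed point of the operator; it only proves equality of dual \emph{values}, $D_\epsilon(\tilde f,\tilde g)=D_\epsilon(\mc{F}^{(c,\epsilon,\phi)}(\tilde f,\tilde g))$, by sandwiching $D_\epsilon(f_{n_k},g_{n_k})\le D_\epsilon(f_{n_k+1},g_{n_k+1})\le D_\epsilon(f_{n_{k+1}},g_{n_{k+1}})$ and passing to the limit --- which itself requires continuity of $D_\epsilon$ along the iterates, obtained from Proposition~\ref{prop:unif_separation} and \citet[Theorem~2.7]{Borweinetal1993} (the conjugate integral functional is only lower semicontinuous in general). This gap is repairable, and once you have $D_\epsilon(\tilde f,\tilde g)=D_\epsilon(\tilde f,\tilde f^{(c,\epsilon,\phi)})$ your deduction that $\tilde g=\tilde f^{(c,\epsilon,\phi)}$ ($\nu$-a.e.\ by uniqueness of the pointwise argmax, then everywhere by continuity and full support) is fine and in fact slightly cleaner than the paper's semicontinuity argument on $H_g$.

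The second gap is the serious one: you conclude that because $\tilde f$ and $\tilde g$ are mutual $(c,\epsilon,\phi)$-transforms, ``Theorem~\ref{thm:f-kantorovich-duality} identifies $(\tilde f,\tilde g)$ as optimal.'' That theorem states that optimal potentials \emph{are} mutual transforms (a necessary condition); it does not assert the converse, and the converse is false in general --- in the collapsed regime the transform is essentially the $c$-transform plus $\epsilon\phi'(\infty)$, and mutual $c$-transforms are notoriously not optimal. The missing step, which is the crux of the convergence proof and the place where the good-triple hypothesis does its real work, is to construct the candidate coupling $\tilde\pi={\phi_+^*}'\circ\frac{1}{\epsilon}(\tilde f\oplus\tilde g-c)\cdot\mu\otimes\nu$ and \emph{verify that it lies in $\Pi(\mu,\nu)$}: the marginal conditions amount to $\int_X{\phi_+^*}'\circ\frac{1}{\epsilon}(\tilde f+\tilde g(y)-c(\cdot,y))\,d\mu=1$ for every $y$ (and symmetrically in $x$), which holds precisely because Proposition~\ref{prop:unif_separation} rules out the collapse case of Proposition~\ref{prop:properties-c-eps-phi-trans}$(i)$ --- and this is why \emph{both} $(X,\mu,\phi)$ and $(Y,\nu,\phi)$ must be good triples. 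Only then does the characterization in Theorem~\ref{thm:strong-dual-primal} certify that $\tilde\pi$ is optimal and $(\tilde f,\tilde g)$ are optimal potentials, giving $L=\OT_\epsilon(\mu,\nu)$. Your final $L^\infty(\pi)$ argument is essentially the paper's and is fine once optimality is established.
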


\begin{proof} By Proposition~\ref{prop:sink-cpct-op} as soon as $n\ge1$ we have that all the functions $(f_n,g_n)$ will have $\|\cdot\|_{\max}$ norm bounded uniformly in terms of $c,\epsilon$ and the diameters of $X$ and $Y$. Therefore we will assume from now on that all the functions in this sequence have this property. We can apply the Arzel\`a-Ascoli theorem \citep[Theorem~8.4.11]{Cobzasetal2019} (as the $\sumnorm$ norm defines the same topology as the $\max$ norm, clearly $\|\cdot\|_{\max}\le \|\cdot\|_{\sumnorm}\le 2\|\cdot\|_{\max}$). Thus we have that $(f_{n_k},g_{n_k})\to (\tilde{f},\tilde{g})$ as $k\to \infty$ in the $\|\cdot\|_{\infty}$ norm for some subsequence $n_k$.

In particular, as all the elements $f_n$ and $g_n$ have Lipschitz norm bounded by $\|c\|_L$, so do $\tilde{f}$ and $\tilde{g}$. The Sinkhorn operator is continuous with the $\|\cdot\|_{\infty}$-norm by Proposition~\ref{prop:sink-cpct-op}. By Proposition~\ref{prop:unif_separation} and the definition of the pair $(f_n,g_n)$ we know that $\frac{1}{\epsilon}(f_n\oplus g_n-c)$ has its image in $(-\infty,\phi'(\infty)-\tau]$ and therefore \citep[Theorem~2.7]{Borweinetal1993} the operator $D_{\epsilon}(\cdot,\cdot)$ is continuous in the set where $(f_n,g_n)$ lives.

Thus, we have that $D_{\epsilon}(f_{n_k},g_{n_k})\to D_{\epsilon}(\tilde{f},\tilde{g})$ and $D_{\epsilon}(\mc{F}^{(c,\epsilon,\phi)}(f_{n_k},g_{n_k}))\to D_{\epsilon}(\mc{F}^{(c,\epsilon,\phi)}(\tilde{f},\tilde{g})))$. Furthermore, by definition of the sequence $(f_n,g_n)$ we have that $D_{\epsilon}(f_{n_k},g_{n_k})) \le D_{\epsilon}(\mc{F}^{(c,\epsilon,\phi)}(f_{n_k},g_{n_k}))\le D_{\epsilon}(f_{n_{k+1}},g_{n_{k+1}}))$. Hence, $D_{\epsilon}(\tilde{f},\tilde{g}) = D_{\epsilon}(\mc{F}^{(c,\epsilon,\phi)}(\tilde{f},\tilde{g}))$. In particular we have that $D_{\epsilon}(\tilde{f},\tilde{g})) \le D_{\epsilon}(\tilde{f},{\tilde{f}}^{(c,\epsilon,\phi)}) =  D_{\epsilon}(\tilde{f}+a,{\tilde{f}}^{(c,\epsilon,\phi)}-a))\le D_{\epsilon}(({\tilde{f}}^{(c,\epsilon,\phi)}-a)^{(c,\epsilon,\phi)},{\tilde{f}}^{(c,\epsilon,\phi)}-a))= D_{\epsilon}(\tilde{f},\tilde{g})$ for some constant $a\in \mb{R}$. Hence, all previous inequalities are equalities. By $(v)$ of Proposition~\ref{prop:properties-c-eps-phi-trans} we have that $D_{\epsilon}(\tilde{f},\tilde{g})) = D_{\epsilon}(\tilde{f},{\tilde{f}}^{(c,\epsilon,\phi)})$ and $D_{\epsilon}(\tilde{f},{\tilde{f}}^{(c,\epsilon,\phi)}))= D_{\epsilon}(({\tilde{f}}^{(c,\epsilon,\phi)})^{(c,\epsilon,\phi)},{\tilde{f}}^{(c,\epsilon,\phi)}))$. Now we need the following lemma:

\begin{lemma} With the same hypothesis as in Theorem~\ref{thm:main-conv} let $f\in\Lip(X)$ and $g\in \Lip(Y)$ be any functions. Suppose that $D_{\epsilon}(f,g)) = D_{\epsilon}(f,{f}^{(c,\epsilon,\phi)})$. Then $g = f^{(c,\epsilon,\phi)}$ for every $y\in Y$. \end{lemma}

\begin{proof}[Proof of lemma:] Assume that equality fails at some $y_0\in Y$. For any $h\in \Lip(Y)$ and $y\in Y$ let us define 
\[
H_h(y):=h(y)-\epsilon\int_X {\phi_+^*}(f+h(y)-c(\cdot,y))d\mu.
\]
By hypothesis we also have that $\int H_g(y)d\nu = \int H_{f^{(c,\epsilon,\phi)}}(y)d\nu$. Note that by definition of the $(c,\epsilon,\phi)$-transform we have that $H_g(y)\le H_{f^{(c,\epsilon,\phi)}}(y)$ for all $y\in Y$. Thus, those functions are equal $\nu$-a.e. If for some $y_0\in Y$ we have $g(y_0)\not=f^{(c,\epsilon,\phi)}(y_0)$ then $H_g(y_0) < H_{f^{(c,\epsilon,\phi)}}(y_0)$. Let us denote that positive difference as $e:=H_{f^{(c,\epsilon,\phi)}}(y_0)-H_g(y_0)$.

Now note first that $H_g(y)$ is upper semi-continuous in $y$ \citep[Theorem~2.3.1]{Zalinescu2002} (we apply this result for general Lipschitz functions in $\Lip(X)$ and then note that we are just specializing that result to the concrete family of functions $\frac{1}{\epsilon}(f(x)+g(y)-c(x,y))$ indexed by $y\in Y$). For the part of $H_{f^{(c,\epsilon,\phi)}}$ we need to prove continuity instead of just upper semicontinuity. To do so, recall that by Proposition~\ref{prop:unif_separation} we know that the functions $\frac{1}{\epsilon}(f(x)+f^{(c,\epsilon,\phi)}(y)-c(x,y))\in \Lip(X)$ (this is a family of functions indexed by $y\in Y$) have their image strictly inside the range $(-\infty,\phi'(\infty))$. Thus we have that for $y\in Y$, $H_{f^{(c,\epsilon,\phi)}}$ is continuous \citep[Theorem~2.7]{Borweinetal1993}.

Hence, by upper semicontinuity of $H_g(y)$ there exists some $\delta>0$ such that if $d_Y(y,y_0)<\delta$ then $H_g(y)<H_g(y_0)+e/3$. Similarly, by continuity of $H_{f^{(c,\epsilon,\phi)}}(y)$ there exists $\delta'>0$ such that if $d_Y(y,y_0)<\delta'$ then $H_{f^{(c,\epsilon,\phi)}}(y) \ge H_{f^{(c,\epsilon,\phi)}}(y_0)-e/3$. Therefore if $d_Y(y,y_0)<\min(\delta,\delta')$ then $H_g(y) < H_{f^{(c,\epsilon,\phi)}}(y)$ and this is a contradiction with the fact that $\nu$ has full support. In particular, we have found an open set $\{y\in Y: d_Y(y,y_0)<\min(\delta,\delta') \}$ (which has positive measure as $\nu$ has full support) such that $H_g(y) < H_{f^{(c,\epsilon,\phi)}}(y)$ and this is a contradiction with the fact that these two functions are equal $\nu$-a.e. \end{proof}

Note that under similar hypothesis an analogous argument shows that if $D_{\epsilon}(f,g) = D_{\epsilon}({g}^{(c,\epsilon,\phi)},g)$ then ${g}^{(c,\epsilon,\phi)}=f$.

This lemma implies that the potentials $(\tilde{f},\tilde{g})$ that we have found satisfy that they are $(c,\epsilon,\phi)$-transforms of each other. We claim that the measure 
\[\tilde{\pi}:={\phi_+^*}'(\tilde{f}\oplus \tilde{g}-c)\mu\otimes\nu\]
is an optimal solution to the primal problem, i.e. $D_{\epsilon}(\tilde{f},\tilde{g})= \OT_{\epsilon}(\mu,\nu)$. First, note that by Theorem~\ref{thm:strong-dual-primal} if we manage to prove that $\tilde{\pi}$ is in $\Pi(\mu,\nu)$ then we would be done (as this clearly satisfy all the remaining conditions to be optimal). As ${\phi_+^*}'$ is nonnegative so is $\tilde{\pi}$.

Let us see that $(p_2^*)(\tilde{\pi})=\nu$ (the other projection follows analogously). Given any $A\subset Y$ measurable we have that
\[
(p_2^*)(\tilde{\pi})(A) = \int_A\int_X {\phi_+^*}'(\tilde{f}(x) + \tilde{g}(y)-c(x,y))d\mu(x) d\nu(y) 
\]
\[= \int_A\int_X {\phi_+^*}'(\tilde{f}(x) + \tilde{f}^{(c,\epsilon,\phi)}(y)-c(x,y))d\mu(x) d\nu(y).
\]

By hypothesis, recall that $(X,\mu,\phi)$ is a good triple with respect to $2\|c\|_L/\epsilon$ and therefore by Proposition~\ref{prop:unif_separation} and $(i)$ of Proposition~\ref{prop:properties-c-eps-phi-trans} we know that for every $y\in Y$ we have $\int_X {\phi_+^*}'(\tilde{f} + \tilde{f}^{(c,\epsilon,\phi)}(y)-c(\cdot,y))d\mu = 1$. Thus, the integral above reduces to $\int_A d\nu = \nu(A)$ and the result follows.

This proves that for the subsequence $n_k$ we have that $D_{\epsilon}(f_{n_k},g_{n_k})\to \OT_{\epsilon}(\mu,\nu)$. However, it is easy to extend this result to the full sequence using the fact that $D_{\epsilon}(f_{n},g_{n})$ is an increasing sequence for all $n\ge 1$. Just note that given $\delta>0$ we know that there exists $K(\delta)$ such that if $k\ge K(\delta)$ we have that $|D_{\epsilon}(f_{n_k},g_{n_k})- \OT_{\epsilon}(\mu,\nu)|<\delta$. Thus, for all $n\ge n_{K(\delta)}$ we know that $|D_{\epsilon}(f_{n},g_{n})- \OT_{\epsilon}(\mu,\nu)|<\delta$ which proves convergence. 

By Proposition~\ref{prop:properties-optimals} we know that the absolutely continuous part of an optimal solution is unique. As in our case we know that this defines directly a probability measure, we know that the solution to the primal problem $\OT_{\epsilon}(\mu,\nu)$ is unique (the singular part must be just 0). To prove the last part of the theorem, let $(f',g')\in \Lip(X)\times \Lip(Y)$ be a pair of optimal potentials and suppose by contradiction that $f_n\oplus g_n \not\to f'\oplus g'$ in $L^{\infty}(\tilde{\pi})$ as $n\to \infty$. This means that there exists a subsequence $n_i$ and a positive $\delta>0$ such that $\|f_{n_i}\oplus g_{n_i} - f'\oplus g'\|_{L^{\infty}(\tilde{\pi})}\ge \delta$ for all $i\ge 1$. We can now repeat the same arguments as before using the Arzel\`a-Ascoli theorem to prove that there exists a subsubsequence $n_{i_j}$ for $j\ge 1$ such that $(f_{n_{i_j}},g_{n_{i_j}})\to (f'',g'')$ as $j\to\infty$ in the $\|\cdot\|_{\infty}$ norm. In particular, $f_{n_{i_j}}\oplus g_{n_{i_j}}\to f''\oplus g''$ in $L^{\infty}(\tilde{\pi})$. By Proposition~\ref{prop:properties-optimals} we have that $f'\oplus g'=f''\oplus g''$ $\pi$-a.e. which is clearly a contradiction. \end{proof}

\section{Functions related to $f$-divergences} \label{appendix_phis}

\subsection{Kullback-Leibler divergence}
\begin{equation}
\phi_+(x)= \begin{cases}
x \log(x) - x + 1 \text{ if $x \geq 0$,}\\
\infty \text{ otherwise.}
\end{cases}
\end{equation}
\begin{equation}
\partial \phi_+(x)= \begin{cases}
\left\{ \log(x) \right\} \text{ if $x > 0$,}\\
\emptyset \text{ otherwise.}
\end{cases}
\end{equation}
\begin{equation}
\phi'(\infty)=\infty.
\end{equation}
\begin{equation}
\phi_+^*(x)=e^x-1.
\end{equation}
\begin{equation}
{\phi_+^*}'(x)=e^x.
\end{equation}
\begin{equation}
{\phi_+^*}''(x)=e^x.
\end{equation}

\subsection{Reverse Kullback-Leibler divergence}\label{div:reverse-KL}
\begin{equation}
\phi_+(x)= \begin{cases}
x-1-\log(x) \text{ if $x \geq 0$,}\\
\infty \text{ otherwise.}
\end{cases}
\end{equation}
\begin{equation}
\partial \phi_+(x)= \begin{cases}
\left\{ \frac{x-1}{x} \right\} \text{ if $x > 0$,}\\
\emptyset \text{ otherwise.}
\end{cases}
\end{equation}
\begin{equation}
\phi'(\infty)=1.
\end{equation}
\begin{equation}
\phi_+^*(x)=\begin{cases}
-\log(1-x) \text{ if $x \leq 1$,}\\
\infty \text{ otherwise.}
\end{cases}
\end{equation}
\begin{equation}
{\phi_+^*}'(x)=\frac{1}{1-x}.
\end{equation}
\begin{equation}
{\phi_+^*}''(x)=\frac{1}{(1-x)^2}.
\end{equation}

\subsection{$\chi^2$ divergence}
\begin{equation}
\phi_+(x)= \begin{cases}
(x-1)^2 \text{ if $x \geq 0$,}\\
\infty \text{ otherwise.}
\end{cases}
\end{equation}
\begin{equation}
\partial \phi_+(x)= \begin{cases}
\left\{ 2x-2 \right\} \text{ if $x \geq 0$,}\\
\emptyset \text{ otherwise.}
\end{cases}
\end{equation}
\begin{equation}
\phi'(\infty)=\infty.
\end{equation}
\begin{equation}
\phi_+^*(x)=
\begin{cases}
\frac{1}{4}x^2+x \text{ if $x \geq -2$},\\
-1 \text{ otherwise.}
\end{cases}
\end{equation}
\begin{equation}
{\phi_+^*}'(x)=
\begin{cases}
\frac{1}{2}x+1 \text{ if $x \geq -2$},\\
0 \text{ otherwise.}
\end{cases}
\end{equation}
\begin{equation}
{\phi_+^*}''(x)=
\begin{cases}
\frac{1}{2} \text{ if $x \geq -2$},\\
0 \text{ otherwise.}
\end{cases}
\end{equation}

\subsection{Reverse $\chi^2$ divergence}
\begin{equation}
\phi_+(x)= \begin{cases}
\frac{1}{x}+x-2 \text{ if $x \geq 0$,}\\
\infty \text{ otherwise.}
\end{cases}
\end{equation}
\begin{equation}
\partial \phi_+(x)= \begin{cases}
\left\{ 1-\frac{1}{x^2} \right\} \text{ if $x > 0$,}\\
\emptyset \text{ otherwise.}
\end{cases}
\end{equation}
\begin{equation}
\phi'(\infty)=1.
\end{equation}
\begin{equation}
\phi_+^*(x)=\begin{cases}
2-2\sqrt{1-x} \text{ if $x \leq 1$,}\\
\infty \text{ otherwise.}
\end{cases}
\end{equation}
\begin{equation}
{\phi_+^*}'(x)=\frac{1}{\sqrt{1-x}}.
\end{equation}
\begin{equation}
{\phi_+^*}''(x)=\frac{1}{2\sqrt{1-x}^3}.
\end{equation}

\subsection{Squared Hellinger divergence}\label{sec:squared-hell}
\begin{equation}
\phi_+(x)= \begin{cases}
(\sqrt{x}-1)^2 \text{ if $x \geq 0$,}\\
\infty \text{ otherwise.}
\end{cases}
\end{equation}
\begin{equation}
\partial \phi_+(x)= \begin{cases}
\left\{ 1-\frac{1}{\sqrt{x}} \right\} \text{ if $x > 0$,}\\
\emptyset \text{ otherwise.}
\end{cases}
\end{equation}
\begin{equation}
\phi'(\infty)=1.
\end{equation}
\begin{equation}
\phi_+^*(x)=\begin{cases}
\frac{x}{1-x} \text{ if $x \leq 1$,}\\
\infty \text{ otherwise.}
\end{cases}
\end{equation}
\begin{equation}
{\phi_+^*}'(x)=\frac{1}{(1-x)^2}.
\end{equation}
\begin{equation}
{\phi_+^*}''(x)=\frac{2}{(1-x)^3}.
\end{equation}

\subsection{Jensen-Shannon divergence}
\begin{equation}
\phi_+(x)= \begin{cases}
x\log(x)-(x+1)\log(\frac{x+1}{2}) \text{ if $x \geq 0$,}\\
\infty \text{ otherwise.}
\end{cases}
\end{equation}
\begin{equation}
\partial \phi_+(x)= \begin{cases}
\left\{ \log(x)-\log(x+1)+\log(2) \right\} \text{ if $x > 0$,}\\
\emptyset \text{ otherwise.}
\end{cases}
\end{equation}
\begin{equation}
\phi'(\infty)=\log(2).
\end{equation}
\begin{equation}
\phi_+^*(x)=\begin{cases}
-\log(2-e^x) \text{ if $x \leq \log(2)$,}\\
\infty \text{ otherwise.}
\end{cases}
\end{equation}
\begin{equation}
{\phi_+^*}'(x)=\frac{1}{2e^{-x}-1}.
\end{equation}
\begin{equation}
{\phi_+^*}''(x)=\frac{2e^x}{(e^x-2)^2}.
\end{equation}

\subsection{Jeffreys divergence}
\begin{equation}
\phi_+(x)= \begin{cases}
(x-1)\log(x) \text{ if $x \geq 0$,}\\
\infty \text{ otherwise.}
\end{cases}
\end{equation}
\begin{equation}
\partial \phi_+(x)= \begin{cases}
\left\{ \log(x)-\frac{1}{x}+1 \right\} \text{ if $x > 0$,}\\
\emptyset \text{ otherwise.}
\end{cases}
\end{equation}
\begin{equation}
\phi'(\infty)=\infty.
\end{equation}
\begin{equation}
\phi_+^*(x)=x+W(e^{1-x})+\frac{1}{W(e^{1-x})}-2.
\end{equation}
\begin{equation}
{\phi_+^*}'(x)=\frac{1}{W(e^{1-x})}.
\end{equation}
\begin{equation}
{\phi_+^*}''(x)=\frac{1}{W(e^{1-x})}-\frac{1}{W(e^{1-x})+1}.
\end{equation}

Here $W$ denotes the principal branch of the Lambert W function, also called the product logarithm, defined implicitly by the relation $W(x)e^{W(x)}=x$. This can be computed by Newton's method and differentiated implicitly. For stability, since we only need the value of $W(e^{1-x})$, we compute $W(e^{1-x})$ explicitly instead of composing the Lambert W function with $e^{1-x}$.

\subsection{Triangular discrimination divergence}
\begin{equation}
\phi_+(x)= \begin{cases}
\frac{(x-1)^2}{x+1} \text{ if $x \geq 0$,}\\
\infty \text{ otherwise.}
\end{cases}
\end{equation}
\begin{equation}
\partial \phi_+(x)= \begin{cases}
\left\{ \frac{(x-1)(x+3)}{(x+1)^2} \right\} \text{ if $x \geq 0$,}\\
\emptyset \text{ otherwise.}
\end{cases}
\end{equation}
\begin{equation}
\phi'(\infty)=1.
\end{equation}
\begin{equation}
\phi_+^*(x)=\begin{cases}
-1 \text{ if $x < -3$,}\\
(\sqrt{1-x}-1)(\sqrt{1-x}-3) \text{ if $-3 \leq x \leq 1$,}\\
\infty \text{ otherwise.}
\end{cases}
\end{equation}
\begin{equation}
{\phi_+^*}'(x)=\begin{cases}
0 \text{ if $x < -3$,}\\
\frac{2}{\sqrt{1-x}}-1 \text{ if $-3 \leq x \leq 1$.}\\
\end{cases}
\end{equation}
\begin{equation}
{\phi_+^*}''(x)=\begin{cases}
0 \text{ if $x < -3$,}\\
\frac{1}{(\sqrt{1-x})^3} \text{ if $-3 \leq x \leq 1$.}\\
\end{cases}
\end{equation}

\clearpage
\section{Experimental results} \label{appendix_results}

\subsection{Experimental setup}
As we explained in the main paper, we tested our algorithm on synthetic 2-dimensional data obtained from the codebase of \citet{Feydyetal2018}. These data consists of 4 pairs of densities in the 2-dimensional space named "crescents", "densities", "moons" and "slopes". For each of these density pairs, and for point cloud sizes in $\{500,1000,2000,5000\}$, we sample 5 different point clouds fixing the random seed in $\{0,1,2,3,4\}$. Thus, we have in total $4\times 4\times 5= 80$ different pairs of point clouds that we are going to use in our experiments. An example of such pointclouds for each density pair can be seen in Figure~\ref{figure_data}. Then, for each of the 8 divergences considered (Kullback-Leibler, reverse Kullback-Leibler, $\chi^2$, reverse $\chi^2$, squared Hellinger, Jensen-Shannon, Jeffreys and triangular discrimination) we tried different $\epsilon$ regularization coefficients ranging from $0.1$ to $10^{-8}$.

\begin{figure}[H]
     \centering
     \begin{subfigure}[b]{0.24\textwidth}
         \centering
         \includegraphics[height=.09\paperheight]{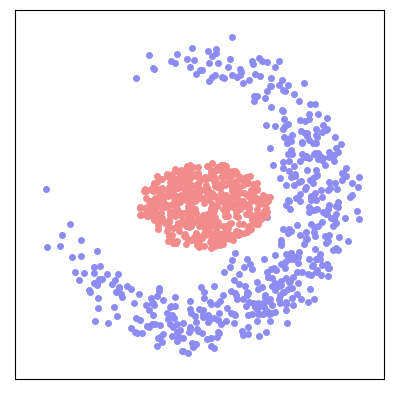}
         \caption{"crescents"}
     \end{subfigure}
     \begin{subfigure}[b]{0.24\textwidth}
         \centering
         \includegraphics[height=.09\paperheight]{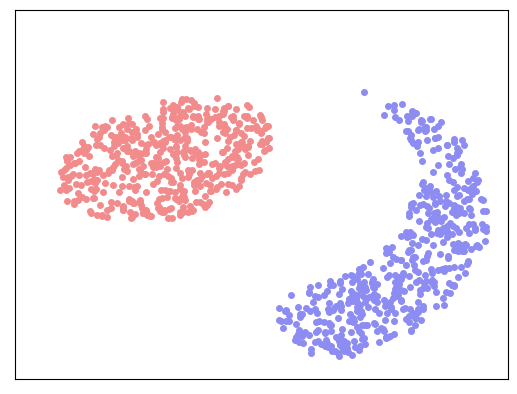}
         \caption{"densities"}
     \end{subfigure}
     \begin{subfigure}[b]{0.24\textwidth}
         \centering
         \includegraphics[height=.09\paperheight]{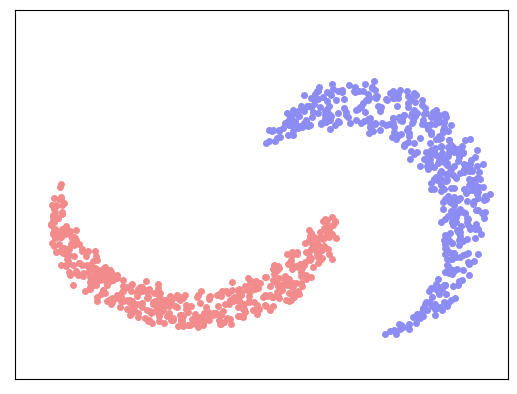}
         \caption{"moons"}
     \end{subfigure}
     \begin{subfigure}[b]{0.24\textwidth}
         \centering
         \includegraphics[height=.09\paperheight]{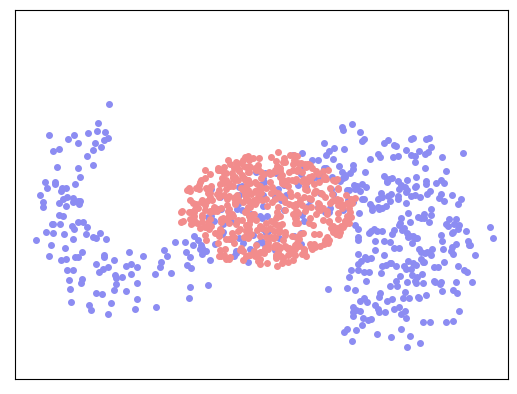}
         \caption{"slopes"}
     \end{subfigure}
        \caption{Example of generated point cloud.}
        \label{figure_data}
\end{figure}

\subsection{Cost of the optimal coupling, convergence speed, sparsity and marginal error}

We can find in Figures \ref{plot_epsilon_vs_cost_and_time_and_sparsity_moons}, \ref{plot_epsilon_vs_cost_and_time_and_sparsity_densities} and \ref{plot_epsilon_vs_cost_and_time_and_sparsity_slopes} the plots of the costs vs running time and the plots of sparsity vs marginal error corresponding to the datasets of "moons", "densities" and "slopes" respectively. Similarly as before, we eliminated the values with marginal error greater than 0.2.

\subsection{Gradients through the optimal coupling}\label{app:grad-through}

As we remarked in the main paper, the algorithm that we present can be used as the loss function between point clouds defined by empirical measures in automatic differentiation engines. In order to do so, one has to compute the gradient with respect to the points in the supports of the measures. An obvious solution is to backpropagate through the Sinkhorn iterations, which is computationally demanding. It is possible to do so via the optimal potentials $f,g$ by generalizing the "graph surgery" method of \citet{Feydyetal2018} and the gradient formula of \citet[Proposition~3.7]{Dimarinoetal2020}. Instead, we propose to do so via the optimal coupling $\pi$. Detaching $\pi$ from the computational graph and calculating $\int c d\pi = \sum_{i,j} C_{i,j} \pi_{i,j}$ leads to a scalar loss which depends on the points $\{x_i\}$ and $\{y_j\}$ only through the cost function $c$.

An intrinsic feature of entropic regularization is that introduces a tradeoff between convergence speed of the Sinkhorn algorithm and bias in the optimal coupling (i.e., the coupling obtained minimizes $\int c\;d\pi+\epsilon D_{f}(\pi\|\mu\otimes\nu)$ instead of the original $\int c\;d\pi$ for $\pi\in \Pi(\mu,\nu)$). Increasing $\epsilon$ leads to faster convergence, but pushes the optimal coupling further away from the coupling which is optimal in the unregularized problem. Using different $f$-divergences for regularization leads to different biases. Since the range of values of these divergences can be quite different, there is no point in comparing the induced biases with equal $\epsilon$s. To make a fair comparison, we tuned the value of $\epsilon$ for each task-divergence setting in order for the Sinkhorn algorithm to converge in $200$ iterations with a tolerance of $\tau=10^{-6}$. 

\clearpage
\begin{multicols}{3}

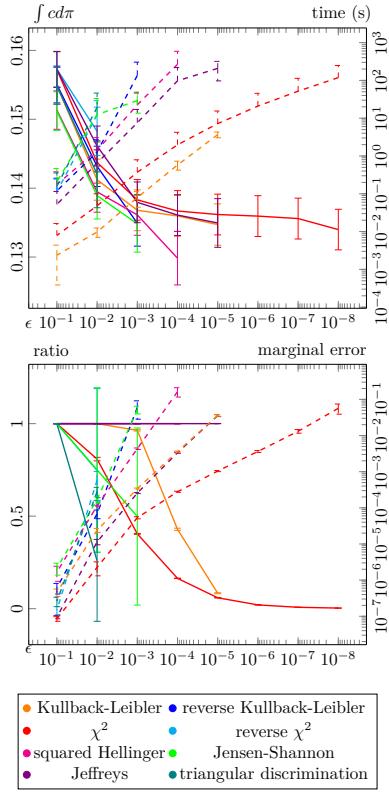
\begin{figure}[H]
\begin{center}
\resizebox{1.0\columnwidth}{!}{
\begin{tikzpicture}
\pgfplotsset{set layers}
\begin{axis}[
	axis y line*=left,
	error bars/y dir=both,
	error bars/y explicit,
    xlabel = iteration,
    xmode = log,
    x dir=reverse,
    xlabel = $\epsilon$,
    x label style={at={(current axis.left of origin)},anchor=east, above=0mm},
    ylabel = $\int c d\pi$,
    y label style={at={(axis description cs:0.175,1.05)},anchor=west,rotate=-90},
    yticklabel style={rotate=90},
]
\addplot[orange, thick] table [x=epsilon, y=cost_avg, y error=cost_std, col sep=comma] {csv/kl_moons_avg_and_std_over_random_seed_and_size_and_dataset_filtered.csv};
\addplot[blue, thick] table [x=epsilon, y=cost_avg, y error=cost_std, col sep=comma] {csv/rkl_moons_avg_and_std_over_random_seed_and_size_and_dataset_filtered.csv};
\addplot[red, thick] table [x=epsilon, y=cost_avg, y error=cost_std, col sep=comma] {csv/chi2_moons_avg_and_std_over_random_seed_and_size_and_dataset_filtered.csv};
\addplot[cyan, thick] table [x=epsilon, y=cost_avg, y error=cost_std, col sep=comma] {csv/rchi2_moons_avg_and_std_over_random_seed_and_size_and_dataset_filtered.csv};
\addplot[magenta, thick] table [x=epsilon, y=cost_avg, y error=cost_std, col sep=comma] {csv/hellinger2_lowtol_moons_avg_and_std_over_random_seed_and_size_and_dataset_filtered.csv};
\addplot[green, thick] table [x=epsilon, y=cost_avg, y error=cost_std, col sep=comma] {csv/js_lowtol_moons_avg_and_std_over_random_seed_and_size_and_dataset_filtered.csv};
\addplot[violet, thick] table [x=epsilon, y=cost_avg, y error=cost_std, col sep=comma] {csv/jeffreys_moons_avg_and_std_over_random_seed_and_size_and_dataset_filtered.csv};
\addplot[teal, thick] table [x=epsilon, y=cost_avg, y error=cost_std, col sep=comma] {csv/triangular_moons_avg_and_std_over_random_seed_and_size_and_dataset_filtered.csv};
\end{axis}
\begin{axis}[
	axis y line*=right,
	error bars/y dir=both,
	error bars/y explicit,
	axis x line=none,
    xlabel = iteration,
    xmode = log,
    x dir=reverse,
	ymode = log, 
    ylabel = time (s),
    y label style={at={(axis description cs:1.00,1.05)},anchor=west,rotate=-90},
    yticklabel style={rotate=90},   
]
\addplot[orange, dashed, thick] table [x=epsilon, y=time_avg, y error=time_std, col sep=comma] {csv/kl_moons_avg_and_std_over_random_seed_and_size_and_dataset_filtered.csv};
\addplot[blue, dashed, thick] table [x=epsilon, y=time_avg, y error=time_std, col sep=comma] {csv/rkl_moons_avg_and_std_over_random_seed_and_size_and_dataset_filtered.csv};
\addplot[red, dashed, thick] table [x=epsilon, y=time_avg, y error=time_std, col sep=comma] {csv/chi2_moons_avg_and_std_over_random_seed_and_size_and_dataset_filtered.csv};
\addplot[cyan, dashed, thick] table [x=epsilon, y=time_avg, y error=time_std, col sep=comma] {csv/rchi2_moons_avg_and_std_over_random_seed_and_size_and_dataset_filtered.csv};
\addplot[magenta, dashed, thick] table [x=epsilon, y=time_avg, y error=time_std, col sep=comma] {csv/hellinger2_lowtol_moons_avg_and_std_over_random_seed_and_size_and_dataset_filtered.csv};
\addplot[green, dashed, thick] table [x=epsilon, y=time_avg, y error=time_std, col sep=comma] {csv/js_lowtol_moons_avg_and_std_over_random_seed_and_size_and_dataset_filtered.csv};
\addplot[violet, dashed, thick] table [x=epsilon, y=time_avg, y error=time_std, col sep=comma] {csv/jeffreys_moons_avg_and_std_over_random_seed_and_size_and_dataset_filtered.csv};
\addplot[teal, dashed, thick] table [x=epsilon, y=time_avg, y error=time_std, col sep=comma] {csv/triangular_moons_avg_and_std_over_random_seed_and_size_and_dataset_filtered.csv};
\end{axis}
\end{tikzpicture}
}
\resizebox{1.0\columnwidth}{!}{
\begin{tikzpicture}
\pgfplotsset{set layers}
\begin{axis}[
	legend style={at={(0.5,-0.175)},anchor=north},
	legend columns=2,
	axis y line*=left,
	error bars/y dir=both,
	error bars/y explicit,
    xlabel = iteration,
    xmode = log,
    x dir=reverse,
    xlabel = $\epsilon$,
    x label style={at={(current axis.left of origin)},anchor=east, below=1mm},
    ylabel = ratio,
    y label style={at={(axis description cs:0.175,1.05)},anchor=west,rotate=-90},
    yticklabel style={rotate=90},
    legend entries={Kullback-Leibler,reverse Kullback-Leibler,$\chi^2$,reverse $\chi^2$,squared Hellinger,Jensen-Shannon,Jeffreys,triangular discrimination},
]
\addlegendimage{only marks, mark=*,mark options={scale=1, fill=orange}, orange}
\addlegendimage{only marks, mark=*,mark options={scale=1, fill=blue}, blue}
\addlegendimage{only marks, mark=*,mark options={scale=1, fill=red}, red}
\addlegendimage{only marks, mark=*,mark options={scale=1, fill=cyan}, cyan}
\addlegendimage{only marks, mark=*,mark options={scale=1, fill=magenta}, magenta}
\addlegendimage{only marks, mark=*,mark options={scale=1, fill=green}, green}
\addlegendimage{only marks, mark=*,mark options={scale=1, fill=violet}, violet}
\addlegendimage{only marks, mark=*,mark options={scale=1, fill=teal}, teal}
\addplot[orange, thick] table [x=epsilon, y=pi_nonzeros_ratio_avg, y error=pi_nonzeros_ratio_std, col sep=comma] {csv/kl_moons_avg_and_std_over_random_seed_and_size_and_dataset_filtered.csv};
\addplot[blue, thick] table [x=epsilon, y=pi_nonzeros_ratio_avg, y error=pi_nonzeros_ratio_std, col sep=comma] {csv/rkl_moons_avg_and_std_over_random_seed_and_size_and_dataset_filtered.csv};
\addplot[red, thick] table [x=epsilon, y=pi_nonzeros_ratio_avg, y error=pi_nonzeros_ratio_std, col sep=comma] {csv/chi2_moons_avg_and_std_over_random_seed_and_size_and_dataset_filtered.csv};
\addplot[cyan, thick] table [x=epsilon, y=pi_nonzeros_ratio_avg, y error=pi_nonzeros_ratio_std, col sep=comma] {csv/rchi2_moons_avg_and_std_over_random_seed_and_size_and_dataset_filtered.csv};
\addplot[magenta, thick] table [x=epsilon, y=pi_nonzeros_ratio_avg, y error=pi_nonzeros_ratio_std, col sep=comma] {csv/hellinger2_lowtol_moons_avg_and_std_over_random_seed_and_size_and_dataset_filtered.csv};
\addplot[green, thick] table [x=epsilon, y=pi_nonzeros_ratio_avg, y error=pi_nonzeros_ratio_std, col sep=comma] {csv/js_lowtol_moons_avg_and_std_over_random_seed_and_size_and_dataset_filtered.csv};
\addplot[violet, thick] table [x=epsilon, y=pi_nonzeros_ratio_avg, y error=pi_nonzeros_ratio_std, col sep=comma] {csv/jeffreys_moons_avg_and_std_over_random_seed_and_size_and_dataset_filtered.csv};
\addplot[teal, thick] table [x=epsilon, y=pi_nonzeros_ratio_avg, y error=pi_nonzeros_ratio_std, col sep=comma] {csv/triangular_moons_avg_and_std_over_random_seed_and_size_and_dataset_filtered.csv};
\end{axis}
\begin{axis}[
	axis y line*=right,
	error bars/y dir=both,
	error bars/y explicit,
	axis x line=none,
    xlabel = iteration,
    xmode = log,
    x dir=reverse,
	ymode = log, 
    ylabel = marginal error,
    y label style={at={(axis description cs:0.84,1.05)},anchor=west,rotate=-90},
    yticklabel style={rotate=90},   
]
\addplot[violet, dashed, thick] table [x=epsilon, y=marginal_error_1_avg, y error=marginal_error_1_std, col sep=comma] {csv/jeffreys_moons_avg_and_std_over_random_seed_and_size_and_dataset_filtered.csv};
\addplot[orange, dashed, thick] table [x=epsilon, y=marginal_error_1_avg, y error=marginal_error_1_std, col sep=comma] {csv/kl_moons_avg_and_std_over_random_seed_and_size_and_dataset_filtered.csv};
\addplot[blue, dashed, thick] table [x=epsilon, y=marginal_error_1_avg, y error=marginal_error_1_std, col sep=comma] {csv/rkl_moons_avg_and_std_over_random_seed_and_size_and_dataset_filtered.csv};
\addplot[red, dashed, thick] table [x=epsilon, y=marginal_error_1_avg, y error=marginal_error_1_std, col sep=comma] {csv/chi2_moons_avg_and_std_over_random_seed_and_size_and_dataset_filtered.csv};
\addplot[cyan, dashed, thick] table [x=epsilon, y=marginal_error_1_avg, y error=marginal_error_1_std, col sep=comma] {csv/rchi2_moons_avg_and_std_over_random_seed_and_size_and_dataset_filtered.csv};
\addplot[magenta, dashed, thick] table [x=epsilon, y=marginal_error_1_avg, y error=marginal_error_1_std, col sep=comma] {csv/hellinger2_lowtol_moons_avg_and_std_over_random_seed_and_size_and_dataset_filtered.csv};
\addplot[green, dashed, thick] table [x=epsilon, y=marginal_error_1_avg, y error=marginal_error_1_std, col sep=comma] {csv/js_lowtol_moons_avg_and_std_over_random_seed_and_size_and_dataset_filtered.csv};
\addplot[teal, dashed, thick] table [x=epsilon, y=marginal_error_1_avg, y error=marginal_error_1_std, col sep=comma] {csv/triangular_moons_avg_and_std_over_random_seed_and_size_and_dataset_filtered.csv};
\end{axis}
\end{tikzpicture}
}
\caption{Dataset: "moons". Above: cost of optimal coupling (solid line) and runtime in seconds (dashed line). Below: ratio of positive elements to all elements in optimal coupling (solid line) and marginal error (dashed line).}
\label{plot_epsilon_vs_cost_and_time_and_sparsity_moons}
\end{center}
\end{figure}

\columnbreak

\begin{figure}[H]
\begin{center}
\resizebox{1.0\columnwidth}{!}{
\begin{tikzpicture}
\pgfplotsset{set layers}
\begin{axis}[
	axis y line*=left,
	error bars/y dir=both,
	error bars/y explicit,
    xlabel = iteration,
    xmode = log,
    x dir=reverse,
    xlabel = $\epsilon$,
    x label style={at={(current axis.left of origin)},anchor=east, above=0mm},
    ylabel = $\int c d\pi$,
    y label style={at={(axis description cs:0.175,1.05)},anchor=west,rotate=-90},
    yticklabel style={rotate=90},
]
\addplot[orange, thick] table [x=epsilon, y=cost_avg, y error=cost_std, col sep=comma] {csv/kl_densities_avg_and_std_over_random_seed_and_size_and_dataset_filtered.csv};
\addplot[blue, thick] table [x=epsilon, y=cost_avg, y error=cost_std, col sep=comma] {csv/rkl_densities_avg_and_std_over_random_seed_and_size_and_dataset_filtered.csv};
\addplot[red, thick] table [x=epsilon, y=cost_avg, y error=cost_std, col sep=comma] {csv/chi2_densities_avg_and_std_over_random_seed_and_size_and_dataset_filtered.csv};
\addplot[cyan, thick] table [x=epsilon, y=cost_avg, y error=cost_std, col sep=comma] {csv/rchi2_densities_avg_and_std_over_random_seed_and_size_and_dataset_filtered.csv};
\addplot[magenta, thick] table [x=epsilon, y=cost_avg, y error=cost_std, col sep=comma] {csv/hellinger2_lowtol_densities_avg_and_std_over_random_seed_and_size_and_dataset_filtered.csv};
\addplot[green, thick] table [x=epsilon, y=cost_avg, y error=cost_std, col sep=comma] {csv/js_lowtol_densities_avg_and_std_over_random_seed_and_size_and_dataset_filtered.csv};
\addplot[violet, thick] table [x=epsilon, y=cost_avg, y error=cost_std, col sep=comma] {csv/jeffreys_densities_avg_and_std_over_random_seed_and_size_and_dataset_filtered.csv};
\addplot[teal, thick] table [x=epsilon, y=cost_avg, y error=cost_std, col sep=comma] {csv/triangular_densities_avg_and_std_over_random_seed_and_size_and_dataset_filtered.csv};
\end{axis}
\begin{axis}[
	axis y line*=right,
	error bars/y dir=both,
	error bars/y explicit,
	axis x line=none,
    xlabel = iteration,
    xmode = log,
    x dir=reverse,
	ymode = log, 
    ylabel = time (s),
    y label style={at={(axis description cs:1.00,1.05)},anchor=west,rotate=-90},
    yticklabel style={rotate=90},   
]
\addplot[orange, dashed, thick] table [x=epsilon, y=time_avg, y error=time_std, col sep=comma] {csv/kl_densities_avg_and_std_over_random_seed_and_size_and_dataset_filtered.csv};
\addplot[blue, dashed, thick] table [x=epsilon, y=time_avg, y error=time_std, col sep=comma] {csv/rkl_densities_avg_and_std_over_random_seed_and_size_and_dataset_filtered.csv};
\addplot[red, dashed, thick] table [x=epsilon, y=time_avg, y error=time_std, col sep=comma] {csv/chi2_densities_avg_and_std_over_random_seed_and_size_and_dataset_filtered.csv};
\addplot[cyan, dashed, thick] table [x=epsilon, y=time_avg, y error=time_std, col sep=comma] {csv/rchi2_densities_avg_and_std_over_random_seed_and_size_and_dataset_filtered.csv};
\addplot[magenta, dashed, thick] table [x=epsilon, y=time_avg, y error=time_std, col sep=comma] {csv/hellinger2_lowtol_densities_avg_and_std_over_random_seed_and_size_and_dataset_filtered.csv};
\addplot[green, dashed, thick] table [x=epsilon, y=time_avg, y error=time_std, col sep=comma] {csv/js_lowtol_densities_avg_and_std_over_random_seed_and_size_and_dataset_filtered.csv};
\addplot[violet, dashed, thick] table [x=epsilon, y=time_avg, y error=time_std, col sep=comma] {csv/jeffreys_densities_avg_and_std_over_random_seed_and_size_and_dataset_filtered.csv};
\addplot[teal, dashed, thick] table [x=epsilon, y=time_avg, y error=time_std, col sep=comma] {csv/triangular_densities_avg_and_std_over_random_seed_and_size_and_dataset_filtered.csv};
\end{axis}
\end{tikzpicture}
}
\resizebox{1.0\columnwidth}{!}{
\begin{tikzpicture}
\pgfplotsset{set layers}
\begin{axis}[
	legend style={at={(0.5,-0.175)},anchor=north},
	legend columns=2,
	axis y line*=left,
	error bars/y dir=both,
	error bars/y explicit,
    xlabel = iteration,
    xmode = log,
    x dir=reverse,
    xlabel = $\epsilon$,
    x label style={at={(current axis.left of origin)},anchor=east, below=1mm},
    ylabel = ratio,
    y label style={at={(axis description cs:0.175,1.05)},anchor=west,rotate=-90},
    yticklabel style={rotate=90},
    legend entries={Kullback-Leibler,reverse Kullback-Leibler,$\chi^2$,reverse $\chi^2$,squared Hellinger,Jensen-Shannon,Jeffreys,triangular discrimination},
]
\addlegendimage{only marks, mark=*,mark options={scale=1, fill=orange}, orange}
\addlegendimage{only marks, mark=*,mark options={scale=1, fill=blue}, blue}
\addlegendimage{only marks, mark=*,mark options={scale=1, fill=red}, red}
\addlegendimage{only marks, mark=*,mark options={scale=1, fill=cyan}, cyan}
\addlegendimage{only marks, mark=*,mark options={scale=1, fill=magenta}, magenta}
\addlegendimage{only marks, mark=*,mark options={scale=1, fill=green}, green}
\addlegendimage{only marks, mark=*,mark options={scale=1, fill=violet}, violet}
\addlegendimage{only marks, mark=*,mark options={scale=1, fill=teal}, teal}
\addplot[orange, thick] table [x=epsilon, y=pi_nonzeros_ratio_avg, y error=pi_nonzeros_ratio_std, col sep=comma] {csv/kl_densities_avg_and_std_over_random_seed_and_size_and_dataset_filtered.csv};
\addplot[blue, thick] table [x=epsilon, y=pi_nonzeros_ratio_avg, y error=pi_nonzeros_ratio_std, col sep=comma] {csv/rkl_densities_avg_and_std_over_random_seed_and_size_and_dataset_filtered.csv};
\addplot[red, thick] table [x=epsilon, y=pi_nonzeros_ratio_avg, y error=pi_nonzeros_ratio_std, col sep=comma] {csv/chi2_densities_avg_and_std_over_random_seed_and_size_and_dataset_filtered.csv};
\addplot[cyan, thick] table [x=epsilon, y=pi_nonzeros_ratio_avg, y error=pi_nonzeros_ratio_std, col sep=comma] {csv/rchi2_densities_avg_and_std_over_random_seed_and_size_and_dataset_filtered.csv};
\addplot[magenta, thick] table [x=epsilon, y=pi_nonzeros_ratio_avg, y error=pi_nonzeros_ratio_std, col sep=comma] {csv/hellinger2_lowtol_densities_avg_and_std_over_random_seed_and_size_and_dataset_filtered.csv};
\addplot[green, thick] table [x=epsilon, y=pi_nonzeros_ratio_avg, y error=pi_nonzeros_ratio_std, col sep=comma] {csv/js_lowtol_densities_avg_and_std_over_random_seed_and_size_and_dataset_filtered.csv};
\addplot[violet, thick] table [x=epsilon, y=pi_nonzeros_ratio_avg, y error=pi_nonzeros_ratio_std, col sep=comma] {csv/jeffreys_densities_avg_and_std_over_random_seed_and_size_and_dataset_filtered.csv};
\addplot[teal, thick] table [x=epsilon, y=pi_nonzeros_ratio_avg, y error=pi_nonzeros_ratio_std, col sep=comma] {csv/triangular_densities_avg_and_std_over_random_seed_and_size_and_dataset_filtered.csv};
\end{axis}
\begin{axis}[
	axis y line*=right,
	error bars/y dir=both,
	error bars/y explicit,
	axis x line=none,
    xlabel = iteration,
    xmode = log,
    x dir=reverse,
	ymode = log, 
    ylabel = marginal error,
    y label style={at={(axis description cs:0.84,1.05)},anchor=west,rotate=-90},
    yticklabel style={rotate=90},   
]
\addplot[violet, dashed, thick] table [x=epsilon, y=marginal_error_1_avg, y error=marginal_error_1_std, col sep=comma] {csv/jeffreys_densities_avg_and_std_over_random_seed_and_size_and_dataset_filtered.csv};
\addplot[orange, dashed, thick] table [x=epsilon, y=marginal_error_1_avg, y error=marginal_error_1_std, col sep=comma] {csv/kl_densities_avg_and_std_over_random_seed_and_size_and_dataset_filtered.csv};
\addplot[blue, dashed, thick] table [x=epsilon, y=marginal_error_1_avg, y error=marginal_error_1_std, col sep=comma] {csv/rkl_densities_avg_and_std_over_random_seed_and_size_and_dataset_filtered.csv};
\addplot[red, dashed, thick] table [x=epsilon, y=marginal_error_1_avg, y error=marginal_error_1_std, col sep=comma] {csv/chi2_densities_avg_and_std_over_random_seed_and_size_and_dataset_filtered.csv};
\addplot[cyan, dashed, thick] table [x=epsilon, y=marginal_error_1_avg, y error=marginal_error_1_std, col sep=comma] {csv/rchi2_densities_avg_and_std_over_random_seed_and_size_and_dataset_filtered.csv};
\addplot[magenta, dashed, thick] table [x=epsilon, y=marginal_error_1_avg, y error=marginal_error_1_std, col sep=comma] {csv/hellinger2_lowtol_densities_avg_and_std_over_random_seed_and_size_and_dataset_filtered.csv};
\addplot[green, dashed, thick] table [x=epsilon, y=marginal_error_1_avg, y error=marginal_error_1_std, col sep=comma] {csv/js_lowtol_densities_avg_and_std_over_random_seed_and_size_and_dataset_filtered.csv};
\addplot[teal, dashed, thick] table [x=epsilon, y=marginal_error_1_avg, y error=marginal_error_1_std, col sep=comma] {csv/triangular_densities_avg_and_std_over_random_seed_and_size_and_dataset_filtered.csv};
\end{axis}
\end{tikzpicture}
}
\caption{Dataset: "densities". Above: cost of optimal coupling (solid line) and runtime in seconds (dashed line). Below: ratio of positive elements to all elements in optimal coupling (solid line) and marginal error (dashed line).}
\label{plot_epsilon_vs_cost_and_time_and_sparsity_densities}
\end{center}
\end{figure}

\columnbreak

\begin{figure}[H]
\begin{center}
\resizebox{1.0\columnwidth}{!}{
\begin{tikzpicture}
\pgfplotsset{set layers}
\begin{axis}[
	axis y line*=left,
	error bars/y dir=both,
	error bars/y explicit,
    xlabel = iteration,
    xmode = log,
    x dir=reverse,
    xlabel = $\epsilon$,
    x label style={at={(current axis.left of origin)},anchor=east, above=0mm},
    ylabel = $\int c d\pi$,
    y label style={at={(axis description cs:0.175,1.05)},anchor=west,rotate=-90},
    yticklabel style={rotate=90},
]
\addplot[orange, thick] table [x=epsilon, y=cost_avg, y error=cost_std, col sep=comma] {csv/kl_slopes_avg_and_std_over_random_seed_and_size_and_dataset_filtered.csv};
\addplot[blue, thick] table [x=epsilon, y=cost_avg, y error=cost_std, col sep=comma] {csv/rkl_slopes_avg_and_std_over_random_seed_and_size_and_dataset_filtered.csv};
\addplot[red, thick] table [x=epsilon, y=cost_avg, y error=cost_std, col sep=comma] {csv/chi2_slopes_avg_and_std_over_random_seed_and_size_and_dataset_filtered.csv};
\addplot[cyan, thick] table [x=epsilon, y=cost_avg, y error=cost_std, col sep=comma] {csv/rchi2_slopes_avg_and_std_over_random_seed_and_size_and_dataset_filtered.csv};
\addplot[magenta, thick] table [x=epsilon, y=cost_avg, y error=cost_std, col sep=comma] {csv/hellinger2_lowtol_slopes_avg_and_std_over_random_seed_and_size_and_dataset_filtered.csv};
\addplot[green, thick] table [x=epsilon, y=cost_avg, y error=cost_std, col sep=comma] {csv/js_lowtol_slopes_avg_and_std_over_random_seed_and_size_and_dataset_filtered.csv};
\addplot[violet, thick] table [x=epsilon, y=cost_avg, y error=cost_std, col sep=comma] {csv/jeffreys_slopes_avg_and_std_over_random_seed_and_size_and_dataset_filtered.csv};
\addplot[teal, thick] table [x=epsilon, y=cost_avg, y error=cost_std, col sep=comma] {csv/triangular_slopes_avg_and_std_over_random_seed_and_size_and_dataset_filtered.csv};
\end{axis}
\begin{axis}[
	axis y line*=right,
	error bars/y dir=both,
	error bars/y explicit,
	axis x line=none,
    xlabel = iteration,
    xmode = log,
    x dir=reverse,
	ymode = log, 
    ylabel = time (s),
    y label style={at={(axis description cs:1.00,1.05)},anchor=west,rotate=-90},
    yticklabel style={rotate=90},   
]
\addplot[orange, dashed, thick] table [x=epsilon, y=time_avg, y error=time_std, col sep=comma] {csv/kl_slopes_avg_and_std_over_random_seed_and_size_and_dataset_filtered.csv};
\addplot[blue, dashed, thick] table [x=epsilon, y=time_avg, y error=time_std, col sep=comma] {csv/rkl_slopes_avg_and_std_over_random_seed_and_size_and_dataset_filtered.csv};
\addplot[red, dashed, thick] table [x=epsilon, y=time_avg, y error=time_std, col sep=comma] {csv/chi2_slopes_avg_and_std_over_random_seed_and_size_and_dataset_filtered.csv};
\addplot[cyan, dashed, thick] table [x=epsilon, y=time_avg, y error=time_std, col sep=comma] {csv/rchi2_slopes_avg_and_std_over_random_seed_and_size_and_dataset_filtered.csv};
\addplot[magenta, dashed, thick] table [x=epsilon, y=time_avg, y error=time_std, col sep=comma] {csv/hellinger2_lowtol_slopes_avg_and_std_over_random_seed_and_size_and_dataset_filtered.csv};
\addplot[green, dashed, thick] table [x=epsilon, y=time_avg, y error=time_std, col sep=comma] {csv/js_lowtol_slopes_avg_and_std_over_random_seed_and_size_and_dataset_filtered.csv};
\addplot[violet, dashed, thick] table [x=epsilon, y=time_avg, y error=time_std, col sep=comma] {csv/jeffreys_slopes_avg_and_std_over_random_seed_and_size_and_dataset_filtered.csv};
\addplot[teal, dashed, thick] table [x=epsilon, y=time_avg, y error=time_std, col sep=comma] {csv/triangular_slopes_avg_and_std_over_random_seed_and_size_and_dataset_filtered.csv};
\end{axis}
\end{tikzpicture}
}
\resizebox{1.0\columnwidth}{!}{
\begin{tikzpicture}
\pgfplotsset{set layers}
\begin{axis}[
	legend style={at={(0.5,-0.175)},anchor=north},
	legend columns=2,
	axis y line*=left,
	error bars/y dir=both,
	error bars/y explicit,
    xlabel = iteration,
    xmode = log,
    x dir=reverse,
    xlabel = $\epsilon$,
    x label style={at={(current axis.left of origin)},anchor=east, below=1mm},
    ylabel = ratio,
    y label style={at={(axis description cs:0.175,1.05)},anchor=west,rotate=-90},
    yticklabel style={rotate=90},
    legend entries={Kullback-Leibler,reverse Kullback-Leibler,$\chi^2$,reverse $\chi^2$,squared Hellinger,Jensen-Shannon,Jeffreys,triangular discrimination},
]
\addlegendimage{only marks, mark=*,mark options={scale=1, fill=orange}, orange}
\addlegendimage{only marks, mark=*,mark options={scale=1, fill=blue}, blue}
\addlegendimage{only marks, mark=*,mark options={scale=1, fill=red}, red}
\addlegendimage{only marks, mark=*,mark options={scale=1, fill=cyan}, cyan}
\addlegendimage{only marks, mark=*,mark options={scale=1, fill=magenta}, magenta}
\addlegendimage{only marks, mark=*,mark options={scale=1, fill=green}, green}
\addlegendimage{only marks, mark=*,mark options={scale=1, fill=violet}, violet}
\addlegendimage{only marks, mark=*,mark options={scale=1, fill=teal}, teal}
\addplot[orange, thick] table [x=epsilon, y=pi_nonzeros_ratio_avg, y error=pi_nonzeros_ratio_std, col sep=comma] {csv/kl_slopes_avg_and_std_over_random_seed_and_size_and_dataset_filtered.csv};
\addplot[blue, thick] table [x=epsilon, y=pi_nonzeros_ratio_avg, y error=pi_nonzeros_ratio_std, col sep=comma] {csv/rkl_slopes_avg_and_std_over_random_seed_and_size_and_dataset_filtered.csv};
\addplot[red, thick] table [x=epsilon, y=pi_nonzeros_ratio_avg, y error=pi_nonzeros_ratio_std, col sep=comma] {csv/chi2_slopes_avg_and_std_over_random_seed_and_size_and_dataset_filtered.csv};
\addplot[cyan, thick] table [x=epsilon, y=pi_nonzeros_ratio_avg, y error=pi_nonzeros_ratio_std, col sep=comma] {csv/rchi2_slopes_avg_and_std_over_random_seed_and_size_and_dataset_filtered.csv};
\addplot[magenta, thick] table [x=epsilon, y=pi_nonzeros_ratio_avg, y error=pi_nonzeros_ratio_std, col sep=comma] {csv/hellinger2_lowtol_slopes_avg_and_std_over_random_seed_and_size_and_dataset_filtered.csv};
\addplot[green, thick] table [x=epsilon, y=pi_nonzeros_ratio_avg, y error=pi_nonzeros_ratio_std, col sep=comma] {csv/js_lowtol_slopes_avg_and_std_over_random_seed_and_size_and_dataset_filtered.csv};
\addplot[violet, thick] table [x=epsilon, y=pi_nonzeros_ratio_avg, y error=pi_nonzeros_ratio_std, col sep=comma] {csv/jeffreys_slopes_avg_and_std_over_random_seed_and_size_and_dataset_filtered.csv};
\addplot[teal, thick] table [x=epsilon, y=pi_nonzeros_ratio_avg, y error=pi_nonzeros_ratio_std, col sep=comma] {csv/triangular_slopes_avg_and_std_over_random_seed_and_size_and_dataset_filtered.csv};
\end{axis}
\begin{axis}[
	axis y line*=right,
	error bars/y dir=both,
	error bars/y explicit,
	axis x line=none,
    xlabel = iteration,
    xmode = log,
    x dir=reverse,
	ymode = log, 
    ylabel = marginal error,
    y label style={at={(axis description cs:0.84,1.05)},anchor=west,rotate=-90},
    yticklabel style={rotate=90},   
]
\addplot[violet, dashed, thick] table [x=epsilon, y=marginal_error_1_avg, y error=marginal_error_1_std, col sep=comma] {csv/jeffreys_slopes_avg_and_std_over_random_seed_and_size_and_dataset_filtered.csv};
\addplot[orange, dashed, thick] table [x=epsilon, y=marginal_error_1_avg, y error=marginal_error_1_std, col sep=comma] {csv/kl_slopes_avg_and_std_over_random_seed_and_size_and_dataset_filtered.csv};
\addplot[blue, dashed, thick] table [x=epsilon, y=marginal_error_1_avg, y error=marginal_error_1_std, col sep=comma] {csv/rkl_slopes_avg_and_std_over_random_seed_and_size_and_dataset_filtered.csv};
\addplot[red, dashed, thick] table [x=epsilon, y=marginal_error_1_avg, y error=marginal_error_1_std, col sep=comma] {csv/chi2_slopes_avg_and_std_over_random_seed_and_size_and_dataset_filtered.csv};
\addplot[cyan, dashed, thick] table [x=epsilon, y=marginal_error_1_avg, y error=marginal_error_1_std, col sep=comma] {csv/rchi2_slopes_avg_and_std_over_random_seed_and_size_and_dataset_filtered.csv};
\addplot[magenta, dashed, thick] table [x=epsilon, y=marginal_error_1_avg, y error=marginal_error_1_std, col sep=comma] {csv/hellinger2_lowtol_slopes_avg_and_std_over_random_seed_and_size_and_dataset_filtered.csv};
\addplot[green, dashed, thick] table [x=epsilon, y=marginal_error_1_avg, y error=marginal_error_1_std, col sep=comma] {csv/js_lowtol_slopes_avg_and_std_over_random_seed_and_size_and_dataset_filtered.csv};
\addplot[teal, dashed, thick] table [x=epsilon, y=marginal_error_1_avg, y error=marginal_error_1_std, col sep=comma] {csv/triangular_slopes_avg_and_std_over_random_seed_and_size_and_dataset_filtered.csv};
\end{axis}
\end{tikzpicture}
}
\caption{Dataset: "slopes". Above: cost of optimal coupling (solid line) and runtime in seconds (dashed line). Below: ratio of positive elements to all elements in optimal coupling (solid line) and marginal error (dashed line).}
\label{plot_epsilon_vs_cost_and_time_and_sparsity_slopes}
\end{center}
\end{figure}

\end{multicols}

\begin{figure}[h]
     \centering
     
     \begin{subfigure}[b]{0.49\textwidth}
         \centering
         \includegraphics[height=.047\paperheight]{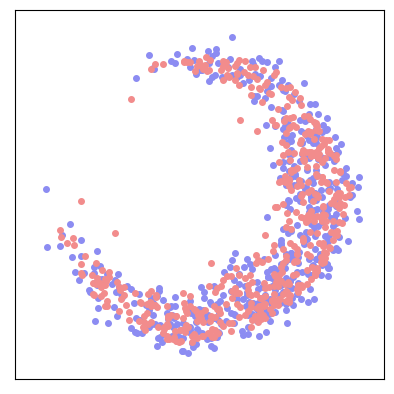}
         \includegraphics[height=.047\paperheight]{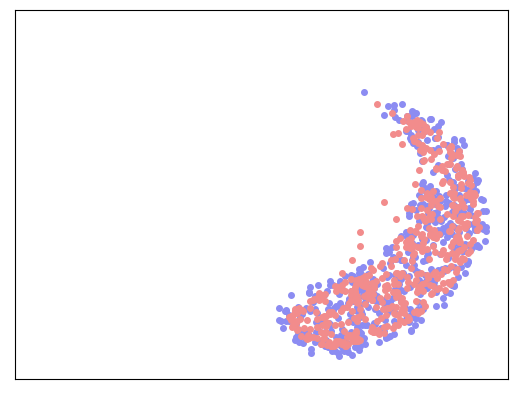}
         \includegraphics[height=.047\paperheight]{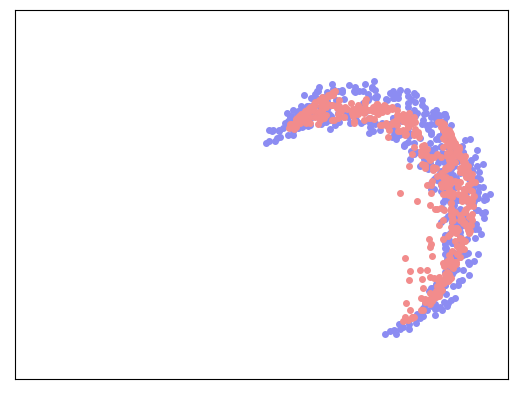}
         \includegraphics[height=.047\paperheight]{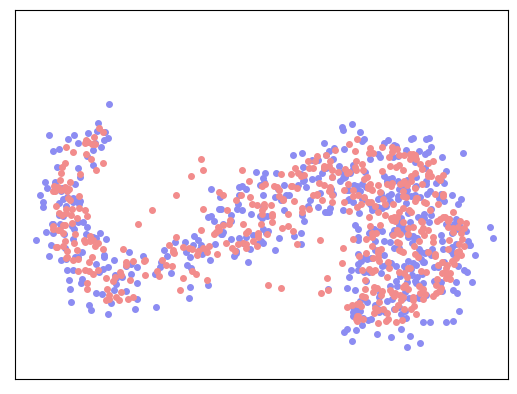}
         \caption{Kullback-Leibler.}
     \end{subfigure}
     \begin{subfigure}[b]{0.49\textwidth}
         \centering
         \includegraphics[height=.047\paperheight]{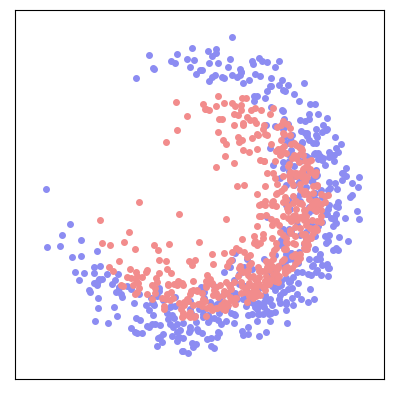}
         \includegraphics[height=.047\paperheight]{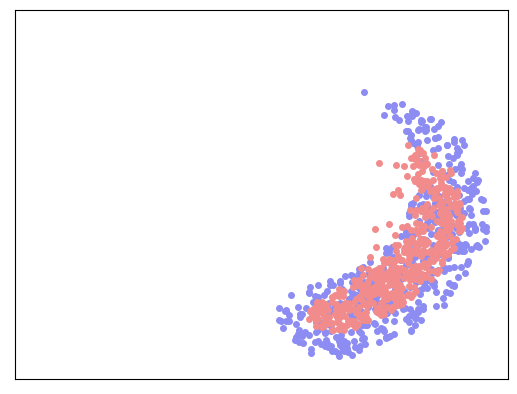}
         \includegraphics[height=.047\paperheight]{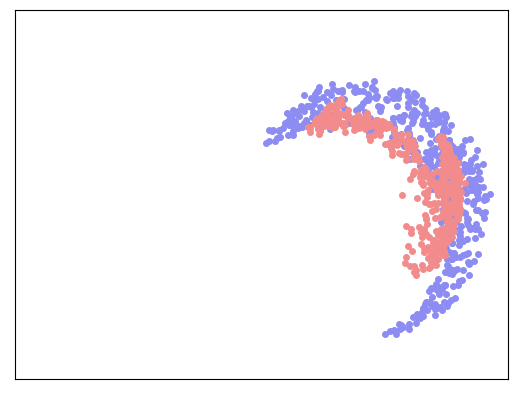}
         \includegraphics[height=.047\paperheight]{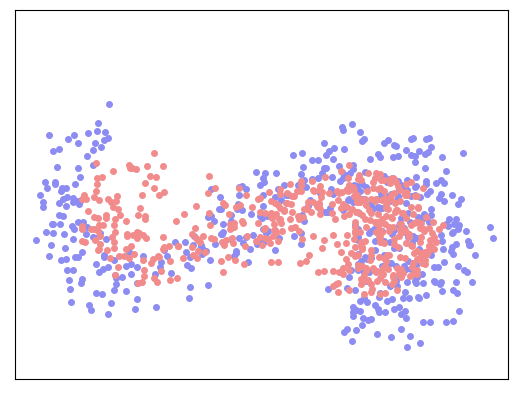}
         \caption{Reverse Kullback-Leibler.}
     \end{subfigure}
     
     \begin{subfigure}[b]{0.49\textwidth}
         \centering
         \includegraphics[height=.047\paperheight]{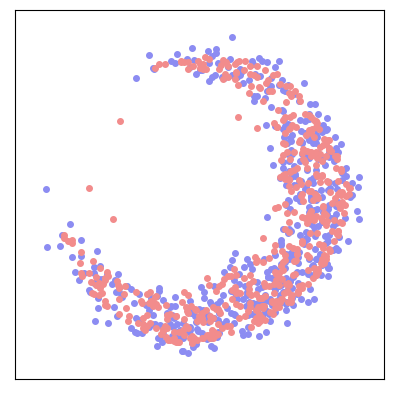}
         \includegraphics[height=.047\paperheight]{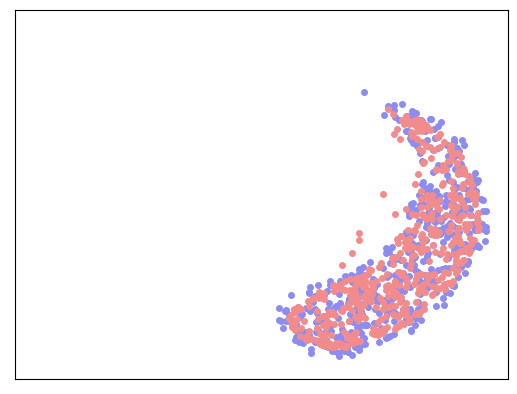}
         \includegraphics[height=.047\paperheight]{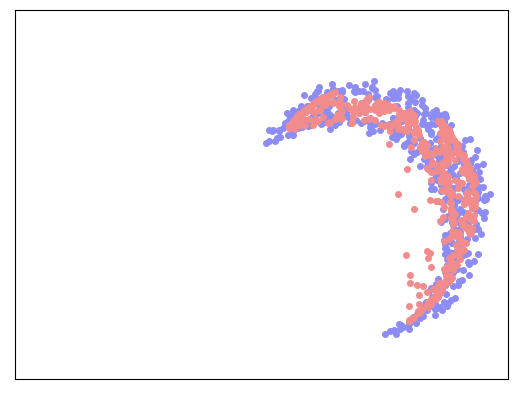}
         \includegraphics[height=.047\paperheight]{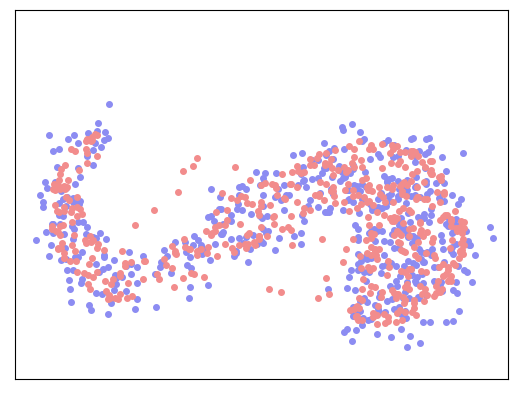}
         \caption{$\chi^2$.}
     \end{subfigure}
     \begin{subfigure}[b]{0.49\textwidth}
         \centering
         \includegraphics[height=.047\paperheight]{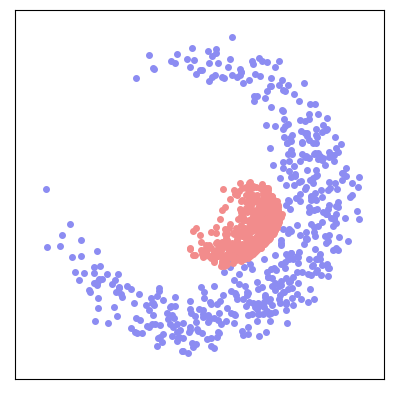}
         \includegraphics[height=.047\paperheight]{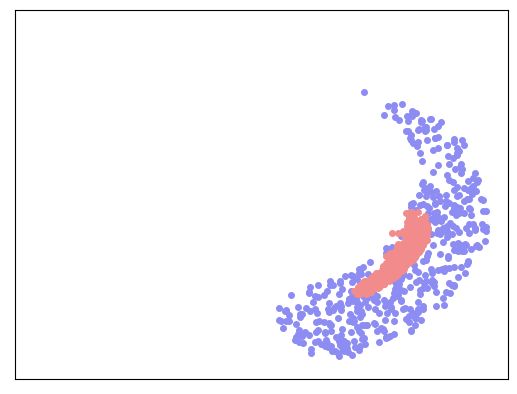}
         \includegraphics[height=.047\paperheight]{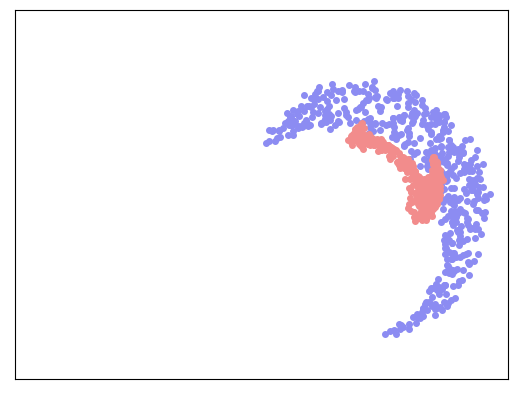}
         \includegraphics[height=.047\paperheight]{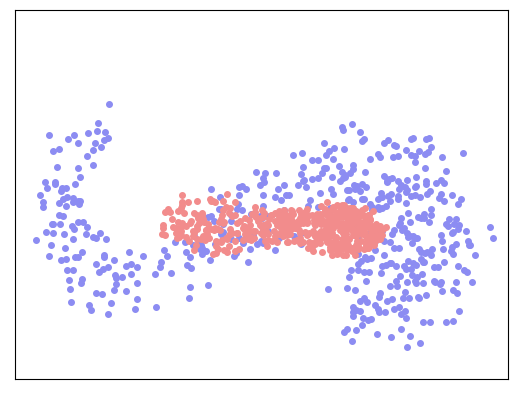}
         \caption{Reverse $\chi^2$.}
     \end{subfigure}
     
     \begin{subfigure}[b]{0.49\textwidth}
         \centering
         \includegraphics[height=.047\paperheight]{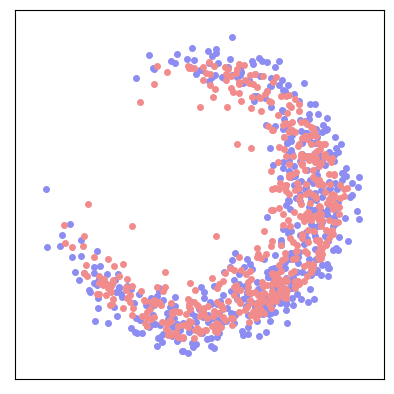}
         \includegraphics[height=.047\paperheight]{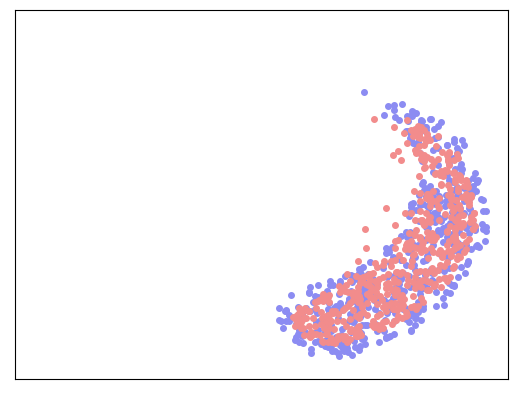}
         \includegraphics[height=.047\paperheight]{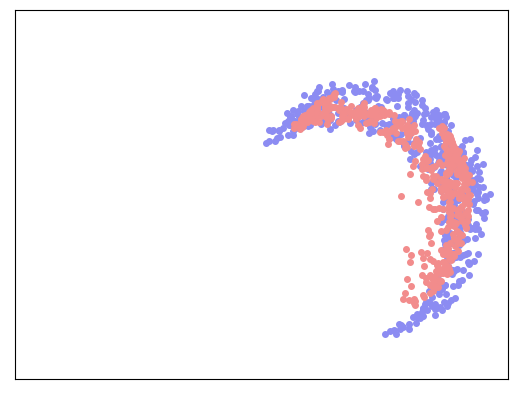}
         \includegraphics[height=.047\paperheight]{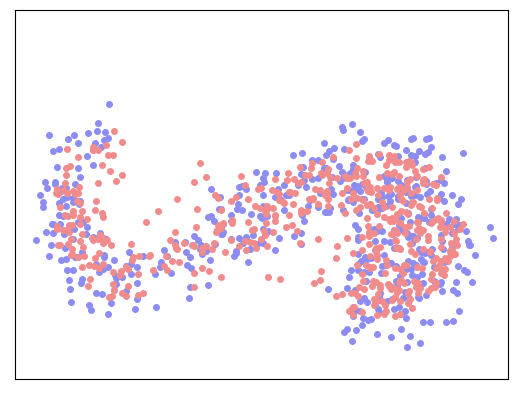}
         \caption{Squared Hellinger.}
     \end{subfigure}
     \begin{subfigure}[b]{0.49\textwidth}
         \centering
         \includegraphics[height=.047\paperheight]{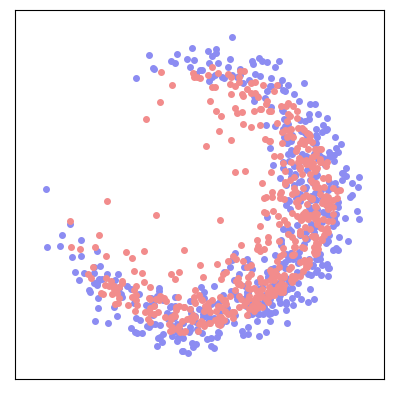}
         \includegraphics[height=.047\paperheight]{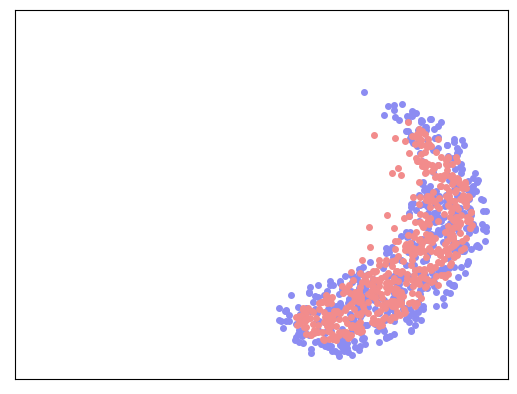}
         \includegraphics[height=.047\paperheight]{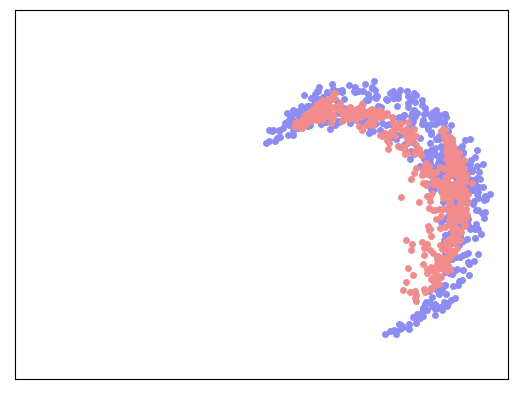}
         \includegraphics[height=.047\paperheight]{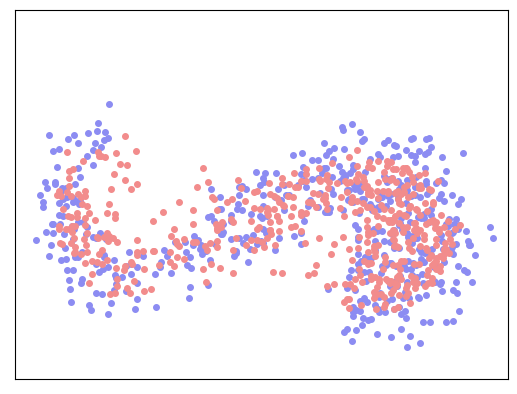}
         \caption{Jensen-Shannon.}
     \end{subfigure}
     
     \begin{subfigure}[b]{0.49\textwidth}
         \centering
         \includegraphics[height=.047\paperheight]{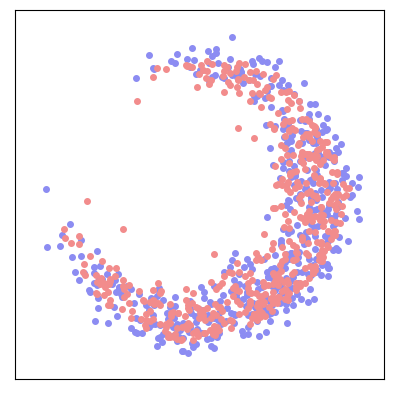}
         \includegraphics[height=.047\paperheight]{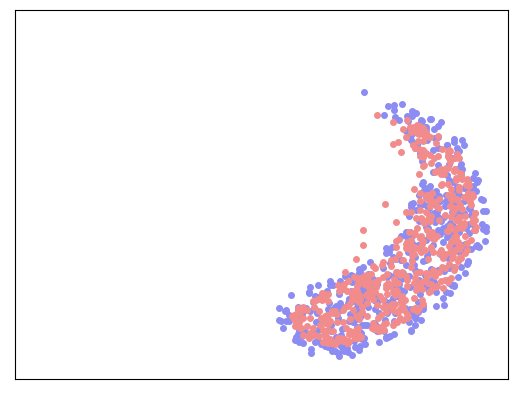}
         \includegraphics[height=.047\paperheight]{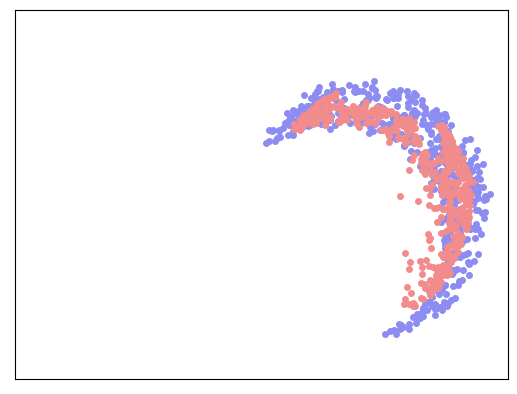}
         \includegraphics[height=.047\paperheight]{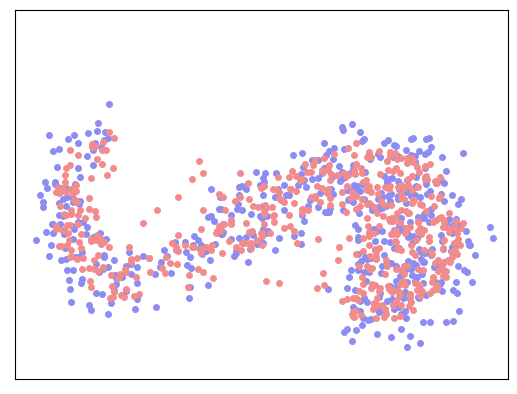}
         \caption{Jeffreys.}
     \end{subfigure}
     \begin{subfigure}[b]{0.49\textwidth}
         \centering
         \includegraphics[height=.047\paperheight]{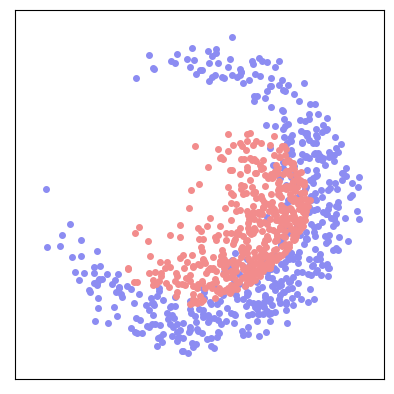}
         \includegraphics[height=.047\paperheight]{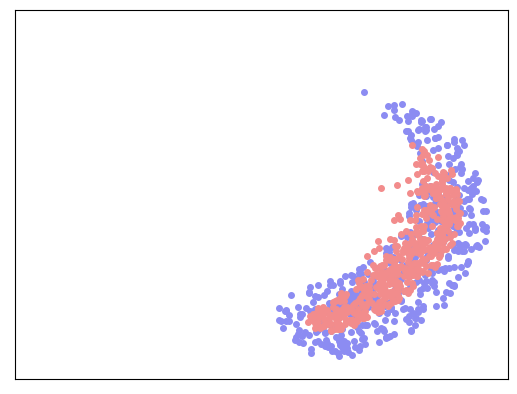}
         \includegraphics[height=.047\paperheight]{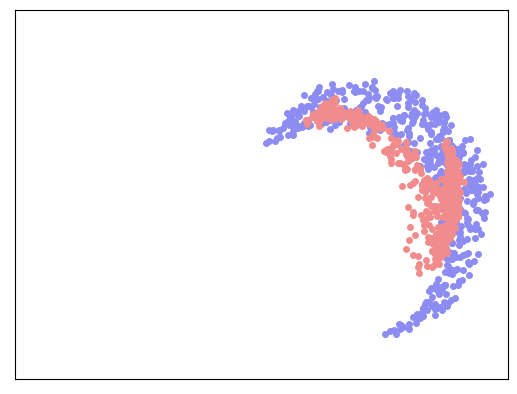}
         \includegraphics[height=.047\paperheight]{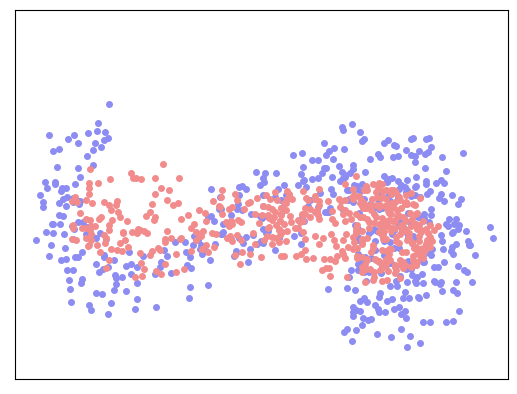}
         \caption{Triangular discrimination.}
     \end{subfigure}
     
        \caption{Bias of optimal couplings when $\epsilon$ is tuned to reach tolerance of $10^{-6}$ in $200$ Sinkhorn iterations.}
        \label{figure_200steps}
\end{figure}

Upon convergence, we backpropagated the resulting loss $\int c d\pi$ as described above, and took $1$ gradient descent step on the points belonging to the support of $\mu$ with a learning rate of $1$. If the coupling $\pi$ were unbiased, this procedure should transport the red pointcloud $\mu$ exactly onto the blue one $\nu$. The results are visualized in Figure~\ref{figure_200steps}. The tradeoff leads to a visually similar, small amount of bias in the case of the Kullback-Leibler, $\chi^2$, squared Hellinger, Jensen-Shannon and Jeffreys divergences.  On the other hand, the bias is more pronounced for the reverse Kullback-Leibler, reverse $\chi^2$ and triangular discrimination divergences. The bias can be reduced in all cases by decreasing $\epsilon$, at a price of slower convergence speed. For other application scenarios, practitioners might benefit from evaluating all considered $f$-divergences, since the biases in other tasks could differ.

\begin{figure}[H]
\begin{center}
\resizebox{0.3\columnwidth}{!}{
\begin{tikzpicture}
\pgfplotsset{set layers}
\begin{axis}[
	axis y line*=left,
    xmode = log,
    x dir=reverse,
    xlabel = $\tau$,
    x label style={at={(current axis.left of origin)},anchor=east, below=0mm},
    ylabel = marginal error,
    y label style={at={(axis description cs:0.175,1.05)},anchor=west,rotate=-90},
    yticklabel style={rotate=90},
]
\addplot[orange, thick] table [x=tolerance, y=marginal_error_1, col sep=comma] {csv/kl_tolmargerr.csv};
\end{axis}
\begin{axis}[
	axis y line*=right,
	axis x line=none,
    xmode = log,
    x dir=reverse,
	ymode = log, 
    ylabel = time (s),
    y label style={at={(axis description cs:1.00,1.05)},anchor=west,rotate=-90},
    yticklabel style={rotate=90},   
]
\addplot[orange, dashed, thick] table [x=tolerance, y=time, col sep=comma] {csv/kl_tolmargerr.csv};
\end{axis}
\end{tikzpicture}
}
\caption{Marginal error (solid line) can be decreased by decreasing the tolerance parameter $\tau$ at the cost of increased running time (dashed line). This plots was done using the Kullback-Leibler divergece, the "crescents" dataset, a point cloud size of 500 and 0 as the random seed.}
\label{plot_tol_marg_err}
\end{center}
\end{figure}
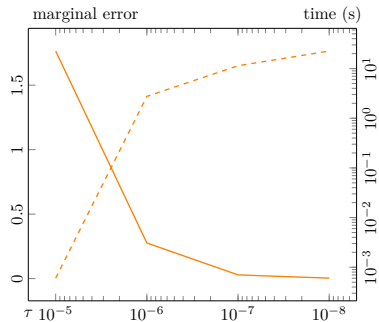

\subsection{Fixing marginal errors}

Depending of the hyperparemeters, the resulting coupling can have a large marginal error. There are algorithms in the literature that correct an approximate coupling to an exact one such as \citet[Algorithm~2]{Altschuler2017}, which we have included in the source code of the experiments. However, the final cost of the resulting coupling was worse in general than the one without this rounding step and also the sparsity of the coupling disappears. Hence, we decided not to include this rounding step in our algorithm. Without such a rounding step, practitioners should set a lower tolerance parameter $\tau$ to decrease the marginal error of the resulting coupling. We ran several experiments to see this effect with different divergences, random seeds, and point cloud sizes, and the results were very similar. Thus, we decided to include just one of them as an example in Figure~\ref{plot_tol_marg_err}.

\end{document}